\documentclass[12pt, reqno]{amsart}

\usepackage{amsmath, amsthm, amssymb}
\usepackage{enumerate}
\usepackage[margin=1.0in]{geometry}
\usepackage{xcolor}
\definecolor{cite}{rgb}{0.30,0.60,1.00}
\definecolor{url}{rgb}{0.00,0.00,0.80}
\definecolor{link}{rgb}{0.40,0.10,0.20}
\usepackage[pdfusetitle,colorlinks,linkcolor=link,urlcolor=url,citecolor=cite,pagebackref,breaklinks]{hyperref}
\usepackage{graphicx}
\usepackage{subcaption}
\usepackage{cleveref}
\usepackage{mathdots}
\usepackage{tikz-cd}
\usepackage{comment}
\usepackage[all]{xy}
\usepackage{mathtools}
\usepackage{physics}
\usepackage{float}

\newtheorem{theorem}{Theorem}[section]

\newtheorem{proposition}[theorem]{Proposition}
\newtheorem{lemma}[theorem]{Lemma}

\newtheorem{corollary}[theorem]{Corollary}

\theoremstyle{definition}
\newtheorem{definition}[theorem]{Definition}
\theoremstyle{definition}
\newtheorem{remark}[theorem]{Remark}
\theoremstyle{definition}
\newtheorem{example}[theorem]{Example}
\theoremstyle{definition}
\newtheorem{assumption}[theorem]{Assumption}

\newcommand{\Hom}{\mathrm{Hom}}

\newcommand{\conjugate}[1]{\overline{#1}}

\newcommand{\sizeof}[1]{\left|#1\right|}

\renewcommand{\innerproduct}[2]{\left\langle #1,#2\right\rangle}

\newcommand{\fieldCharacter}{\psi}
\newcommand{\centralCharacter}[1]{\omega_{#1}}
\newcommand{\Ind}[3]{\mathrm{Ind}_{#1}^{#2}\left(#3\right)}

\newcommand{\Whittaker}{\mathcal{W}}

\newcommand{\besselFunction}{\mathcal{J}}
\newcommand{\SO}{\mathrm{SO}}
\newcommand{\GSO}{\mathrm{GSO}}
\newcommand{\Sp}{\mathrm{Sp}}
\newcommand{\ad}{\mathrm{ad}}

\newcommand{\SL}{\mathrm{SL}}
\newcommand{\K}{\mathrm{K}}
\newcommand{\A}{\mathrm{AS}_{\psi^{-1}}}
\newcommand{\WC}{\mathrm{WC}}
\newcommand{\Ql}{\overline{\mathbb{Q}}_\ell}

\newcommand{\opp}{\mathrm{opp}}
\newcommand{\Bun}{\mathrm{Bun}_{\mathcal{G}}}

\newcommand{\Hk}{\mathrm{Hk}}
\newcommand{\pr}{\mathrm{pr}}
\newcommand{\Gr}{\mathrm{Gr}}
\newcommand{\IC}{\mathrm{IC}}
\newcommand{\ext}{\mathrm{ext}}

\newcommand{\grpIndex}[2]{\left[#1:#2\right]}

\newcommand{\IdentityMatrix}[1]{I_{#1}}
\newcommand{\diag}{\mathrm{diag}}

\newcommand{\GL}{\mathrm{GL}}
\newcommand{\Sodd}{\mathrm{SO}_{2n+1}}
\newcommand{\Spn}{\mathrm{Sp}_{2n}}
\newcommand{\GSpn}{\mathrm{GSp}_{2n}}
\newcommand{\GU}{\mathrm{GU}_{n}}

\newcommand{\Esix}{\mathrm{E}_6}
\newcommand{\Esev}{\mathrm{E}_7}
\newcommand{\Gsix}{\mathrm{GE}_6}
\newcommand{\Gsev}{\mathrm{GE}_7}

\newcommand{\finiteField}{\mathbb{F}}

\newcommand{\whittakerVector}[1]{v_{#1, \fieldCharacter}}

\newcommand{\opposite}[1]{{#1}_{\mathrm{-}}}
\newcommand{\specialWeylElement}[1]{\dot{w}_{#1}}

\newcommand\restr[2]{{% we make the whole thing an ordinary symbol
  \left.\kern-\nulldelimiterspace % automatically resize the bar with \right
  #1 % the function
  \vphantom{\big|} % pretend it's a little taller at normal size
  \right|_{#2} % this is the delimiter
  }}

\hypersetup{pdfauthor={Robert Cass, Miao (Pam) Gu, Elad Zelingher},
	pdfsubject={Representation theory},
	pdfkeywords={Bessel functions, Kloosterman sheaves}}

    \author{Robert Cass}
\address{Mathematical Sciences Department, Claremont McKenna College, 850 Columbia Avenue, Claremont, CA 91711 USA}
\email{robert.cass@claremontmckenna.edu}

\author{Miao (Pam) Gu}
\address{Department of Mathematics, Aarhus University, Ny Munkegade 118, DK-8000 Aarhus, Denmark}
\email{pmgu@math.au.dk}

\author{Elad Zelingher}
\address{Department of Mathematics, University of Southern California, Los Angeles, CA 90089-2532, USA}
\email{zelinghe@usc.edu}

\keywords{Kloosterman sums, Bessel functions}
\subjclass[2020]{20C33, 11L05, 11T24}

\title[Kloosterman sheaves and Bessel functions]{Kloosterman sheaves and Bessel functions \\ for generic principal series of finite groups}

\keywords{Kloosterman sums, Bessel functions}
\subjclass[2020]{20C33, 11L05, 11T24}
\begin{document}

\begin{abstract}
Let $G$ be a quasi-split reductive group over a finite field and let $\hat{G}$ be the Langlands dual group. Assuming the derived subgroup of $G$ is almost simple, we use techniques from the geometric Langlands program to relate special values of Bessel functions for generic principal series representations of $G$ to the trace of Frobenius acting on Kloosterman sheaves of Heinloth--Ng\^o--Yun for $\hat{G}$. We also give explicit examples in essentially all possible cases, including exceptional groups.
\end{abstract}

\maketitle

\tableofcontents

\section{Introduction}

A Whittaker model is an explicit realization of a representation of a $p$-adic group. Whittaker models play a prominent role in many aspects of the Langlands program, such as constructions of integral representations of $L$-functions \cite{JacquetPiatetskiShapiroShalika1983, Shahidi1984, Shahidi1990} and indexing of local $L$-packets \cite[Section 9, item (2)]{GGP2012}.  Representations admitting Whittaker models are called \emph{generic}. 
Whittaker models can also be defined for finite groups of Lie type.
More precisely, let $\mathbb{F}$ be a finite field of order $q$ and characteristic $p$. Let $G / \mathbb{F}$ be a connected, quasi-split reductive group whose derived subgroup is almost simple. Fix a Borel subgroup $B = TU$ and let $\psi \colon U(\mathbb{F}) \rightarrow \mathbb{C}^{\times}$ be a generic character; the particular $\psi$ we work with factors through an algebraic map $\phi \colon U \rightarrow \mathbb{G}_a$. Given an irreducible generic representation $\tau$ of $G(\mathbb{F})$, we may define a special matrix coefficient called a \emph{Bessel function}, denoted by $\besselFunction_{\tau, \psi}$.

In \cite{Zelingher2023}, the third author showed that for $G = \GL_n$, the value of $\besselFunction_{\tau, \psi}$ at an element of the form $\operatorname{anti-diag}(cI_k, I_{n-k})$ is, up to normalization, the trace of Frobenius acting on a geometric stalk of an exterior power of an exotic Kloosterman sheaf defined by Deligne--Katz \cite{Deligne1977,Katz1988}. Recently, Heinloth--Ng\^o--Yun \cite{HNY} defined Kloosterman sheaves for quasi-split reductive groups, using ideas from the geometric Langlands program. In this work we generalize \cite{Zelingher2023} from $\GL_n$ to arbitrary $G$ as above, in the case of principal series.

To state our main result, fix an isomorphism $\mathbb{C}  \cong \overline{\mathbb{Q}}_\ell$ for some prime $\ell \neq p$, and a character $\chi \colon T(\mathbb{F}) \rightarrow \overline{\mathbb{Q}}_\ell^\times$.
We consider the unique irreducible subrepresentation $\tau \subset \Ind{B_-(\mathbb{F})}{G(\mathbb{F})}{\chi}$ admitting a $\psi$-Whittaker vector.
For the generalization of $\operatorname{anti-diag}(cI_k, I_{n-k})$, let $P \subset G$ be the maximal parabolic subgroup associated to a simple root $\alpha_i$ whose adjoint fundamental coweight $\omega_i^\vee$ is minuscule and defined over $\mathbb{F}$. Let $L \subset P$ be the Levi factor and let $\dot{w}_P \in G(\mathbb{F})$ be the lift (determined by the conventions in Section \ref{sec:Crystal}) of the minimal length element in the maximal left coset of the Weyl group of $P$ in that of $G$.

\begin{theorem} \label{mainthm} Suppose we have an isomorphism $T \cong Z(G) \times T_{\ad}$. For $z \in Z(L)(\mathbb{F})$, use this isomorphism to write $z = z_0 z_1$ where $z_0 \in Z(G)(\mathbb{F})$ and $z_1 \in Z(L_{\ad})(\mathbb{F})$. Then
$$\textnormal{Kl}_{\hat{G}}^{\omega_i^\vee}(\phi, \chi^{-1}; \alpha_i(z))  = \overline{\chi(z_0)} (-1)^{\dim G/P} q^{\tfrac{\dim G/P}{2}} \besselFunction_{\tau, \fieldCharacter}\left(z \specialWeylElement{P}\right).$$ 
\end{theorem}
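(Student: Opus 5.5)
Both sides will be reduced to the same explicit finite character sum over $\mathbb{F}$-points of a Schubert-type cell, and the two sums then compared term by term.

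\emph{Step 1: the representation-theoretic side.} Use the standard formula for the Bessel function of a principal series. The $\psi$-Whittaker vector in $\Ind{B_-(\mathbb{F})}{G(\mathbb{F})}{\chi}$ can be realized as the function supported on $B_-(\mathbb{F}) U(\mathbb{F})$ given by $f(b u) = \chi(b)\psi(u)$. It generates $\tau$, and $\besselFunction_{\tau,\psi}(g)$ is (up to the normalization $\dim$ of the Whittaker space) the average $\frac{1}{|U(\mathbb{F})|}\sum_{u\in U(\mathbb{F})} \psi^{-1}(u) f(ug)$. I will evaluate this at $g = z\dot w_P$. Since $\dot w_P$ conjugates $U$ partly into $U_-$, only $u \in U_P(\mathbb{F})\cdot(U\cap L)$ with $u z\dot w_P \in B_- U$ contribute. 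Here $U_P$ is the unipotent radical. The condition cuts out the big cell of $G/P$, of dimension $\dim G/P$.

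\emph{Step 2: simplify the sum.} Because $\omega_i^\vee$ is minuscule, $U_P$ is abelian. After writing $u\dot w_P = b_-(u) u'$ via the Bruhat decomposition, the sum becomes
\[
\sum_{x} \chi\bigl(t(x)\bigr)\,\psi\bigl(\cdot\bigr).
\]
Here $x$ runs over an open subset of $U_P \cong \mathbb{A}^{\dim G/P}$, and $t(x)$ is the torus part of $b_-(u)$. The factor $z = z_0 z_1$ enters through $\chi(z_0)$, since $z_0$ is central, and through rescaling by $\alpha_i(z_1)$, since $z_1\in Z(L_{\ad})$ acts on $U_P$ through $\alpha_i$. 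This explains the appearance of $\alpha_i(z)$ and of $\overline{\chi(z_0)}$. I will use the conventions of Section \ref{sec:Crystal} for $\dot w_P$ to fix signs.

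\emph{Step 3: the geometric side.} Use the Heinloth--Ng\^o--Yun description of $\textnormal{Kl}_{\hat G}^{\omega_i^\vee}$. By geometric Satake it is the pushforward along the Hecke modification at $\infty$ (more precisely at a point $a$) with kernel $\IC_{\omega_i^\vee}$. Since $\omega_i^\vee$ is minuscule, $\Gr_{\omega_i^\vee} \cong G/P$ is smooth. The IC sheaf is therefore the shifted constant sheaf $\Ql[\dim G/P](\tfrac{\dim G/P}{2})$, which produces the factor $(-1)^{\dim G/P}q^{\dim G/P/2}$. The trace of Frobenius at $\alpha_i(z)$ is then a sum over points of $\Gr_{\omega_i^\vee}$ whose modification lands in the open $\Bun$-cell. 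Each point contributes the Artin--Schreier value from $\phi$ and the Kummer value from $\chi^{-1}$.

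\emph{Step 4: identify the two sums.} Show that the relevant open locus is the big cell $U_P\dot w_P P/P$, matched with the locus of Step 1. Then check that the Kummer and Artin--Schreier factors agree with $\chi(t(x))$ and $\psi$; this accounts for the $\chi^{-1}$ and complex conjugation. The main obstacle is this identification: computing explicitly which points of the Hecke fiber contribute and matching the torus coordinates via the MV-cycle/crystal parametrization of Section \ref{sec:Crystal}. I would first verify it for $\GL_n$, where it must reproduce \cite{Zelingher2023}, and then argue uniformly using that minuscule weights have a single $W$-orbit. In that case every weight space is one-dimensional and each MV cycle is a single cell.
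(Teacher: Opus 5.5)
\textbf{There is a genuine gap, and it is Step 4.} Your Steps 1 and 3 are right in outline and match the paper, with two corrections.
\begin{itemize}
\item The set of $u\in U$ with $uz\specialWeylElement{P}\in\opposite{B}U$ is open in all of $U$. Only after summing out the right action of $U_L=U\cap\specialWeylElement{P}U\specialWeylElement{P}^{-1}$ (the summand is invariant since $\phi(\specialWeylElement{P}^{-1}v\specialWeylElement{P})=\phi(v)$ on $U_L$) do you reach Theorem \ref{thmbessel}. That is, you get $q^{-d}$, with $d=\dim G/P$, times a sum over $x=u_1z\specialWeylElement{P}u_2\in\opposite{B}$ of $\chi(\gamma(x))\fieldCharacter^{-1}(\phi(u_1)+\phi(u_2))$.
\item The IC sheaf contributes $(-1)^dq^{-d/2}$, not $(-1)^dq^{d/2}$. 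The $q^{d/2}$ in the statement appears only after combining this with the $q^{-d}$ above.
\end{itemize}

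Step 4 is the entire content of the theorem, and you state it only as a goal. What is needed is an explicit isomorphism from the crystal fiber $\pi^{-1}(z)$ onto the fiber of $\Hk^\circ_{\omega_i^\vee}$ over $\alpha_i(z)$. It must carry $\gamma$ and the decoration to the open-cell coordinates $(f_T,f_+,f_0)$. In the paper this is the Lam--Templier map $u_1t\specialWeylElement{P}u_2\mapsto(\dot\kappa^{-1}t^{-1}u_1t\dot\kappa,\alpha_i(t))$, with $\dot\kappa=\tilde\omega_i^\vee(\tau)\specialWeylElement{P}$, together with Proposition \ref{XadMaps}:
\begin{itemize}
\item The condition $h(1)\in\specialWeylElement{P}^{-1}\opposite{B}U$ of Proposition \ref{Hkcirc} becomes exactly $u_1t\specialWeylElement{P}\in\opposite{B}U$.
\item $f_+=u_2^{-1}$ modulo $[U,U]$.
\item $f_0=u_{-\theta}(-\phi(u_1))$. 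This reflects that $\alpha_i$ is the only simple root in $U_P$, and that $\specialWeylElement{P}^{-1}$ conjugates $U_{\alpha_i}$ onto $U_{-\theta}$, the affine simple root at $\infty$.
\item $f_T=\tilde\omega_i^\vee(-1)\gamma_{\ad}^T(x)^{-1}$. This is where $\chi^{-1}$ and the lifted weight map of Corollary \ref{cor-weightSplit} enter, and with them the splitting $T\cong Z(G)\times T_{\ad}$ and the factor $\overline{\chi(z_0)}$.
\end{itemize}
The formula for $f_T$ is delicate. The transition function changes the level structure at $\infty$ by $c_a=\specialWeylElement{P}^{-1}\tilde\omega_i^\vee(-a^{-1})\specialWeylElement{P}$ (Remark \ref{LTrem}). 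The naive computation in \cite[Lemma 7.4]{LamTemplier2024} misses this and would leave a spurious Kummer factor in $a$.

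\textbf{The route you sketch would not supply this identification.} MV cycles are the components of $\Gr_\mu\cap S_\nu$, where $S_\nu$ is a semi-infinite $U(\mathbb{F}(\!(\tau)\!))$-orbit, and they compute weight multiplicities of $V_\mu$. For minuscule $\mu$ they are Schubert cells in $G/P$. But they describe neither the open locus of $\Gr_\mu$ cut out by the $\mathcal{G}(1,2)$-level structures at $0$ and $\infty$, nor the restrictions of $f_T,f_+,f_0$ to it. That is exactly what must be matched with $\gamma$ and $f$. Checking $\GL_n$ against \cite{Zelingher2023} gives nothing to generalize either, since that proof goes through gamma factors rather than a geometric identification.

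Finally, in Step 2 you keep only $u_1\in U_P$ and leave the additive argument unspecified. You need both halves of $x=u_1z\specialWeylElement{P}u_2$: $u_2$ is what matches $f_+$ (the finite simple roots at $\infty$), and $u_1$ is what matches $f_0$ (the affine simple root).
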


To explain the left side of Theorem \ref{mainthm}, note that the isomorphism $T \cong Z(G) \times T_{\ad}$ provides a lift of $\omega_i^\vee$ to $T \subset G$. The left side is then the trace of Frobenius acting on a geometric stalk above $\alpha_i(z) = \alpha_i(z_1) \in \mathbb{F}^{\times}$ of the Kloosterman sheaf of Heinloth--Ng\^o--Yun \cite[Remark 2.5]{HNY} for the irreducible representation of the Langlands dual group $\hat{G}$ of highest weight $\omega_i^{\vee}$. The power of $q$ in the right side of Theorem \ref{mainthm} reflects the fact that we normalize our Kloosterman sheaves to have unitary Frobenius eigenvalues; see Definition \ref{KDef} and Remark \ref{rem:Norm} for further discussion. Our normalization also includes some minor differences consistent with the work of Lam--Templier \cite{LamTemplier2024} and Xu--Zhu \cite{XZ22} as discussed in Remark \ref{NormRemark}. We refer to Section \ref{Sec:Qsplit} for the precise definition in the quasi-split case.

For split groups, a minuscule fundamental coweight exists when $G$ is of type A, B, C, D, $\Esix$ or $\Esev$. For quasi-split groups, a rational minuscule fundamental coweight exists for certain involutions of the Dynkin diagram when $G$ is of type $\mathrm{A}_n$ for $n$ odd, or of type $\mathrm{D}_n$ for arbitrary $n$. We write down explicitly the elements $z \specialWeylElement{P}$ in all cases in Section \ref{Sec:ExamplesSplit}; see in particular Tables \ref{tab:minuscule_coweights_AC} and \ref{tab:minuscule_coweights_D} for classical groups and Section \ref{sec:E} for type E. Even if there is no isomorphism $T \cong Z(G) \times T_{\ad}$, one can easily reduce to this case by choosing an embedding as explained in Remark \ref{notTorus}, e.g.~$\SL_n \rightarrow \GL_n$ and further examples in Sections \ref{Sec:Qsplit} and \ref{Sec:ExamplesSplit}.

Bessel functions were first introduced by Gelfand--Graev \cite{GelfandGraev1962} for $\SL_2\left(\finiteField\right)$, and later  for general linear groups by S.~I.~Gelfand in \cite{Gelfand1970}.
Bessel functions encode a great deal of representation-theoretic information. For example, given an irreducible cuspidal representation of a finite general linear group, one can give an explicit matrix realization of this representation using its Bessel function \cite{AlperinJames1995}. Bessel functions also appear naturally in formulas for Rankin--Selberg gamma factors of irreducible generic representations \cite{Roditty2010, Nien2014, LiuZhang2022, LiuZhang2022b, SoudryZelingher2023, LiuHazeltine2024}.

Explicit formulas for values of Bessel functions for $\GL_n\left(\finiteField\right)$ were computed for $n=2$ \cite{PiatetskiShapiro1983, Carter1992}, for $n=3$ \cite{Chang1976, HelversenPasotto1982, Carter1992} and for some cases of $n=4$ \cite{Gotsis1997, DeriziotisGotsis1998, ShinodaTulunay2005}. These formulas are often difficult and do not suggest a pattern.
A formula for values of Bessel functions of $\GL_n\left(\finiteField\right)$ at elements of the form $\operatorname{anti-diag}(c,\IdentityMatrix{n-1})$ was given in \cite{CurtisShinoda2004, Tulunay2004, Nien2017} in terms of exotic Kloosterman sums, which was then generalized in \cite{Zelingher2023} as discussed above.
For groups $G$ other than $\SL_2\left(\finiteField\right)$ and $\GL_n\left(\finiteField\right)$ almost no explicit computation of values of Bessel functions has been done before our work. We are only aware of results of Rainbolt \cite{Rainbolt2002, Rainbolt2008} for unitary groups of small rank and of Breeding-Allison--Rainbolt \cite{BreedingAllisonRainbolt2019} for $\operatorname{GSp}_4\left(\finiteField\right)$. 

Our proof of Theorem \ref{mainthm} is entirely different from the proof for $\GL_n\left(\finiteField\right)$ by the third author in \cite{Zelingher2023}.
In the latter work, the relation between Bessel functions and Kloosterman sheaves appears coincidental, as it is established using a third connection with gamma factors. In this work, we first find an explicit expression (Theorem \ref{thmbessel}) for $\besselFunction_{\tau, \fieldCharacter}$ at special elements in terms of a parabolic geometric crystal of $G$ (Definition \ref{def:Crystal}) introduced by Berenstein--Kazhdan \cite{BK07}. This brings algebraic geometry into the picture, and allows us to construct an object $\WC_G^P(\phi, \chi)$ (Definition \ref{defWC}) in the derived category of \'etale sheaves on $Z(L)$ which encodes special values of $\besselFunction_{\tau, \fieldCharacter}$ under the function-sheaf dictionary. The identification in the minuscule case with a Kloosterman sheaf in Sections \ref{sec:K} and \ref{Sec:proof} closely follows work of Lam--Templier \cite{LamTemplier2024} on $D$-modules of geometric crystals. Here we deal with some additional technicalities since we do not wish to limit ourselves to simple adjoint groups. We show the following, again assuming we have fixed an isomorphism $T \cong Z(G) \times T_{\ad}$, which induces an isomorphism $Z(L) \cong Z(G) \times Z(L_\ad)$.

\begin{theorem}Let $\K_\chi$ be the character sheaf on $T$ associated to $\chi \colon T(\mathbb{F}) \rightarrow \overline{\mathbb{Q}}_\ell^\times$.
    Viewing $\alpha_i$ as an isomorphism $Z(L_{\ad}) \rightarrow \mathbb{G}_m$, there exists an isomorphism of $\ell$-adic sheaves on $Z(L) \colon$
    $$\WC_G^P(\phi, \chi)  \cong \restr{\K_{\chi}}{Z(G)} \boxtimes \alpha_i^*\textnormal{Kl}_{\hat{G}}^{\omega_i^\vee}(\phi, \chi^{-1}).$$
\end{theorem}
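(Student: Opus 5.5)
The strategy follows Lam--Templier, transported from $D$-modules to $\ell$-adic sheaves. We first unwind Definition~\ref{defWC}. The object $\WC_G^P(\phi,\chi)$ is obtained by pulling back $\phi$-twisted Artin--Schreier sheaves and the character sheaf $\K_\chi$ along the parabolic geometric crystal $X = B_- \dot w_P B_- \cap U T'$ (Definition~\ref{def:Crystal}), and then pushing forward with compact support along the central-character map $X \to Z(L)$. On the other side, $\textnormal{Kl}_{\hat G}^{\omega_i^\vee}(\phi,\chi^{-1})$ is, by the definition of Heinloth--Ng\^o--Yun in the form of Definition~\ref{KDef}, computed by the Hecke operator $\Hk$ at $\omega_i^\vee$. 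This operator acts on the Whittaker/Iwahori-twisted sheaf on $\Bun$ for $\mathbb{P}^1$ with level structures at $0$ and $\infty$. Since $\omega_i^\vee$ is minuscule, the Schubert variety $\Gr_{\omega_i^\vee}\cong G/P$ is smooth and $\IC = \Ql[\dim G/P]$. The Kloosterman sheaf is therefore a compactly supported pushforward of a rank-one sheaf over an open cell of a twisted version of $G/P$, fibered over $\mathbb{G}_m$.

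The key steps are as follows.
\begin{enumerate}[(1)]
\item Reduce to the adjoint-type factor. Using $T\cong Z(G)\times T_\ad$, the map $X\to Z(L)\cong Z(G)\times Z(L_\ad)$ splits. The $Z(G)$-coordinate contributes only the pullback of $\K_\chi$ restricted to $Z(G)$, by the multiplicativity of character sheaves: $\K_\chi$ is pulled back along a group homomorphism, so the $\boxtimes$ factor splits off via the Künneth formula. This leaves a statement purely about $Z(L_\ad)\cong\mathbb{G}_m$ via $\alpha_i$.
\item Identify the moduli of Hecke modifications. Following HNY, as in Lam--Templier, the relevant open part of the Hecke stack over $\mathbb{G}_m$ (with the generic trivial bundle) is identified with the big open cell of $G/P$ twisted by $\omega_i^\vee(a)$. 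Concretely, it is the double coset $U \omega_i^\vee(t) \dot w_P U$ intersected with the appropriate locus. Under this identification, the HNY integrand becomes $\phi(u_1)+\phi(u_2)$ together with $\chi^{-1}$ evaluated on the torus part.
\item Match this with the crystal parametrization. Lam--Templier's isomorphism between the geometric crystal and the Hecke fiber (the ``mirror'' parametrization of $G/P$ for minuscule $P$) shows that the superpotential of Berenstein--Kazhdan equals the HNY function and that the torus coordinate matches $\alpha_i$. This step involves keeping track of the inversion $\chi\mapsto\chi^{-1}$ coming from $B_-$ versus $B$, and the sign and normalization conventions of Remark~\ref{NormRemark}.
\item Conclude that the two complexes are compactly supported pushforwards of isomorphic sheaves along isomorphic maps. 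The shift and Tate twist by $\dim G/P$ agree by the normalization in Definition~\ref{KDef}.
\end{enumerate}

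The main obstacle is step~(3): producing an explicit isomorphism, defined over $\mathbb{F}$ and valid for non-adjoint $G$ (and quasi-split forms), between the Berenstein--Kazhdan crystal and the Hecke fiber, and verifying that the functions and torus maps correspond exactly. My first attempt would be to use Bruhat decomposition of the minuscule Schubert cell, $G/P \supset B\dot w_P P/P$. Writing $u_1 \dot w_P t u_2$, the uniqueness of such factorizations and the fact that the unipotent radical of the opposite parabolic is abelian (minuscule case) should let the coordinates be compared directly. For quasi-split forms, I would then descend by checking Frobenius-equivariance of this identification.
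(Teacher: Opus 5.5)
Your outline has the same architecture as the paper's proof: split off $Z(G)$ by K\"unneth as in Lemma~\ref{lem-WCad}, transport along Lam--Templier's isomorphism $\tilde\iota\colon X_{\ad}\xrightarrow{\sim}\Hk^\circ_{\omega_i^\vee}$, and use that the $\IC$ sheaf is $\Ql(\tfrac{d}{2})[d]$ on the smooth $G/P$. However, the step you defer as ``the main obstacle'' is where the content lies, and citing Lam--Templier for it would not give the stated isomorphism.

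Besides $f_+(\tilde\iota(x))=u_2^{-1}$ and $f_0(\tilde\iota(x))=u_{-\theta}(-\phi(u_1))$, which make $\mathrm{AS}_{\psi^{-1}}$ on the crystal side match $\mathrm{AS}_{\psi}$ on the Kloosterman side, you must prove $f_T(\tilde\iota(x))=\tilde\omega_i^\vee(-1)\,\gamma_{\ad}^T(x)^{-1}$. Lam--Templier's formula for $f_T$, taken at face value, gives instead $f_T(\tilde\iota(x))=t\,\gamma(x)^{-1}$ with $t=\pi(x)$. The Kloosterman integrand then picks up the extra factor $\chi(\tilde\omega_i^\vee(-a))^{-1}$, where $a=\alpha_i(t)$. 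So you would identify $\WC_G^P(\phi,\chi)$ with the Kloosterman sheaf only up to a Kummer twist on $\mathbb{G}_m$; this is the \'etale avatar of the multiplicative $D$-module in their weighted theorem.

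The missing idea concerns the gluing $\dot\kappa=\tilde\omega_i^\vee(\tau)\dot w_P$ with $\tau=1-t/a$. Read in the chart at $\infty$, it is $\dot w_P^{-1}\tilde\omega_i^\vee(\tau/t)\dot w_P$, whose value at $t=\infty$ is $c_a=\dot w_P^{-1}\tilde\omega_i^\vee(-a^{-1})\dot w_P$, in general not the identity. So the level structure at $\infty$ is $g(0)^{-1}c_a^{-1}$ rather than $g(0)^{-1}$, which gives $f_T=\tilde\omega_i^\vee(-a^{-1})(b_-)^{-1}$. Combined with $t=\tilde\omega_i^\vee(a)$, the $a$-dependence cancels and leaves the constant $\tilde\omega_i^\vee(-1)$. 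This constant is exactly what the pullback $\tilde\omega_i^\vee(-1)^*$ in Definition~\ref{KDef} absorbs, since $\tilde\omega_i^\vee(-1)^2=1$. This is not a sign or normalization convention to be tracked: without it the statement is off by an $a$-dependent character. Your proposed Bruhat-cell coordinate comparison also does not reach it, because the issue is the gluing at $\infty$, not coordinates on the open cell; the isomorphism $\tilde\iota$ itself is already Lemma 7.8 of Lam--Templier.

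Your step (1) also hides a point you need: after removing $Z(G)$ you are not left with a purely adjoint problem. The remaining weight map is the lift $\gamma_{\ad}^T\colon X_{\ad}\to T$ of Corollary~\ref{cor-weightSplit}, not $\gamma_{\ad}$ to $T_{\ad}$. Likewise $f_T$ lands in $T$, whereas Lam--Templier work only with $T_{\ad}$. The paper handles this by lifting $x=u_1t\dot w_Pu_2$ to $\tilde x=u_1\tilde t\dot w_Pu_2\in X$ with $\tilde t=\tilde\omega_i^\vee(\alpha_i(t))$. It then notes that $\tilde t^{-1}\tilde x\in B_-^{w_P}$, so $\tilde x \bmod U_-=\gamma^T_{\ad}(x)$, and runs the corrected level-structure computation in $G$. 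As a minor point, the crystal is $X=UZ(L)\dot w_PU\cap B_-$, not the locus you wrote down.
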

Combined with \cite{HNY}, it follows that $\WC_G^P(\phi, \chi)$ is a local system, pure of weight 0, and can viewed as parametrizing a family of Hecke eigenvalues for an automorphic function.
Our method has the advantage that it works uniformly for all quasi-split groups, including exceptional groups, and the relation between Bessel functions and Kloosterman sheaves is more apparent. The disadvantage is that we are only able to use it for principal series representations at this time. We aim to generalize our results to non-minuscule parabolics and other irreducible generic representations in the future. 

\subsection*{Acknowledgments} During part of the preparation of this article, R.C.~was supported by the National Science Foundation grant DMS 1840234. R.C.~also thanks Claremont McKenna College and the University of Michigan for excellent working conditions. M.P.G.~is thankful for partial support provided by an AMS-Simons Travel Grant. M.P.G.~also thanks Stephen DeBacker, Shenghao Li, and Griffin Wang for helpful discussions and Bao Ch\^au Ng\^o for encouragement. E.Z.~would like to thank Zhiwei Yun for multiple interesting discussions about Kloosterman sheaves and Bessel functions, for sending him unpublished notes, and for his encouragement. E.Z.~would also like to thank the University of Michigan for excellent working conditions and, in particular, is grateful to Charlotte Chan for her continuous support. 
Gemini, Claude, and ChatGPT were used for proofreading and LaTeX formatting. Additionally, they were used to assist the authors in constructing the examples in Section \ref{Sec:ExamplesSplit} as discussed there (see also \cite{CassGuZelingher2026GitHub}), and to help identify a minor inconsistency between the setup \cite{HNY} in \cite{LamTemplier2024} which is noted around \eqref{level-str} and discussed in Remarks \ref{LTrem}, \ref{NormRemark}. The manuscript was written entirely by the authors, and they take full responsibility for its content. 

\section{Preliminaries on reductive groups} \label{sect:prelim}
Let $G$ be a split (connected) reductive algebraic group over any base scheme. We further assume the derived subgroup of $G$ is almost simple. We continue to assume that $G$ is split until Section \ref{Sec:Qsplit}, where we explain how to deduce the quasi-split case by Galois equivariance. Let $T \subset B \subset G$ be a split maximal torus and Borel subgroup. Let $X^*(T)$ and $X_*(T)$ be the groups of characters and cocharacters. Let $R = R^+ \sqcup R^-$ be the roots, and let $\Delta$ and $\Delta^\vee$ be the simple roots and coroots determined by $B$. Let $Z(G) \subset G$ be the center of $G$. There is an exact sequence $1 \rightarrow Z(G) \rightarrow G \rightarrow G_{\ad} \rightarrow 1$, where $G_{\ad}$ is the adjoint quotient of $G$. Let $T_{\ad}$ be the image of $T$ in $G_{\ad}$; this is a split maximal torus. 
\begin{lemma} \label{lem-split}
    If $Z(G)$ is a torus,  there is a non-canonical isomorphism 
    \begin{equation} \label{eq-split}
        T \cong Z(G) \times T_{\ad}.
    \end{equation}
\end{lemma}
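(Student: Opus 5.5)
The plan is to find a splitting of the exact sequence of split tori
\[
1 \rightarrow Z(G) \rightarrow T \rightarrow T_{\ad} \rightarrow 1 .
\]
Exactness on the right is clear, since $T_{\ad}$ is by definition the image of $T$. The kernel of $T \rightarrow T_{\ad}$ is $T \cap Z(G)$. This equals $Z(G)$ because the center of a connected reductive group is contained in every maximal torus.

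Since all three groups are split tori, the functor $X^*(-)$ gives an anti-equivalence between split tori and finitely generated free abelian groups. Under it the sequence becomes
\[
0 \rightarrow X^*(T_{\ad}) \rightarrow X^*(T) \rightarrow X^*(Z(G)) \rightarrow 0 .
\]
This uses that $Z(G)$ is a torus, so it lies in the subcategory where $X^*$ is an equivalence and its character group is free. A short exact sequence of tori corresponds to a short exact sequence of character groups. Left exactness holds because $T \rightarrow T_{\ad}$ is surjective. Right exactness holds because every character of the subtorus $Z(G)$ extends to $T$; this can be seen by restriction of characters being surjective for a closed subtorus, or directly because $X^*(T) \rightarrow X^*(Z(G))$ has torsion-free cokernel when the quotient is a torus.

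Since $X^*(Z(G))$ is free, the sequence of lattices splits. A splitting corresponds to a retraction $T \rightarrow Z(G)$, or equivalently a section $T_{\ad} \rightarrow T$. Combining the retraction with the quotient map gives a homomorphism $T \rightarrow Z(G) \times T_{\ad}$. This map induces an isomorphism on character lattices, hence is an isomorphism of split tori. It is non-canonical because it depends on the chosen splitting.

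The main point needing care is the claim that $Z(G) = \ker(T \rightarrow T_{\ad})$ as group schemes, and not only on points. This is needed over a general base scheme, and also in characteristic $p$ where centers can be non-reduced (e.g.\ $\mu_p$). Here the hypothesis that $Z(G)$ is a torus makes this harmless. One can argue via the root datum: $Z(G)$ is the diagonalizable group with character group $X^*(T)/\mathbb{Z}R$, and $X^*(T_{\ad}) = \mathbb{Z}R$. The assumption says $X^*(T)/\mathbb{Z}R$ is torsion-free, so $\mathbb{Z}R$ is a saturated sublattice and a direct complement exists. Over a general base one works with split tori and their constant character groups, so the same lattice argument applies verbatim.
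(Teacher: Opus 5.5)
Your proof is correct and follows the paper's argument: dualize $1 \to Z(G) \to T \to T_{\ad} \to 1$ to a short exact sequence of character lattices, and split it using the freeness of $X^*(Z(G))$, which is where the torus hypothesis enters. One small slip: your alternative justification of right exactness should say that $X^*(T) \to X^*(Z(G))$ is surjective (zero cokernel), which holds for any closed subgroup of a split torus; the torsion-freeness you invoke is really a statement about $X^*(T)/X^*(T_{\ad}) \cong X^*(Z(G))$, as you correctly say in your last paragraph.
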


\begin{proof}
    The exact sequence $1 \rightarrow Z(G) \rightarrow T \rightarrow T_{\ad} \rightarrow 1$ corresponds to an exact sequence of finitely generated abelian groups
    $$0 \rightarrow X^*(T_\ad) \rightarrow X^*(T) \rightarrow X^*(Z(G)) \rightarrow 0.$$ Splittings $T \rightarrow Z(G)$ are in bijection with splittings $X^*(Z(G)) \rightarrow X^*(T)$. Since $Z(G)$ is a torus, $X^*(Z(G))$ is free, so at least one splitting exists.
\end{proof}

The quotient $T \rightarrow T_{\ad}$ induces a bijection between $\Delta$ and $\Delta_{\ad}$, and similarly for simple coroots. For every subset $I \subset \Delta$, there is a parabolic subgroup $P \subset G$ generated by $B$ and the root subgroups $U_{-\alpha}$ for $\alpha \in I$. We call $P$ a standard parabolic subgroup, and its Levi factor  $L \subset P$ is called a standard Levi subgroup. 

Taking quotients and preimages under the map $G \rightarrow  G_{\ad}$ induces a bijection between standard parabolics and Levi factors in these groups. In what follows, we use the subscript $\ad$ to denote the corresponding objects for $G_{\ad}$ under this bijection. For example, $\Delta_{\ad} \subset X^*(T_{\ad})$ and $L_{\ad} = L/Z(G)$. We emphasize that $L_{\ad}$ is in general different from the adjoint quotient $L/Z(L)$, although there is always a surjection with central kernel from $L_{\ad}$ to the latter.

\begin{corollary}\label{cor:splitting-of-center-of-levi-part} Let $L \subset G$ be a standard Levi subgroup. If $Z(G)$ is a torus, then a choice of isomorphism as in \eqref{eq-split} determines an isomorphism $$Z(L) \cong Z(G) \times Z(L_{\ad}).$$
\end{corollary}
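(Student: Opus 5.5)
The plan is to restrict the isomorphism $T \cong Z(G) \times T_{\ad}$ of Lemma \ref{lem-split} to $Z(L)$. First, $Z(G) \subset Z(L) \subset T$, since $L$ is a standard Levi subgroup containing $T$, and $Z(G)$ is central in $L$. Let $\pi \colon T \to T_{\ad}$ be the quotient map and $s \colon T \to Z(G)$ the chosen retraction, so that the isomorphism \eqref{eq-split} is $t \mapsto (s(t), \pi(t))$. We know $Z(L) = \bigcap_{\alpha \in I} \ker(\alpha|_T)$, and likewise $Z(L_{\ad}) = \bigcap_{\alpha \in I} \ker(\alpha|_{T_{\ad}})$, using the bijection $\Delta \leftrightarrow \Delta_{\ad}$. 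Since each simple root of $G$ is pulled back from the corresponding root of $G_{\ad}$ via $\pi$, we get the key identity $Z(L) = \pi^{-1}(Z(L_{\ad}))$ as subgroup schemes of $T$.

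Given this identity, the isomorphism $T \to Z(G) \times T_{\ad}$ carries $\pi^{-1}(Z(L_{\ad}))$ onto $Z(G) \times Z(L_{\ad})$. Indeed, the second coordinate of the isomorphism is $\pi$, so the preimage of $Z(G) \times Z(L_{\ad})$ is exactly $\{t : \pi(t) \in Z(L_{\ad})\}$. Restricting therefore yields the desired isomorphism $Z(L) \cong Z(G) \times Z(L_{\ad})$. It depends only on the chosen splitting, which is the sense of ``determines'' in the statement.

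The main point needing care is the identity $Z(L) = \pi^{-1}(Z(L_{\ad}))$ as group schemes rather than merely on points. This matters because centers may be non-smooth, for instance $\mu_p$ in characteristic $p$, and the base is general. I would handle it scheme-theoretically. For a split reductive group, the center of a Levi subgroup containing $T$ is the intersection of the kernels of the roots of $L$, and it suffices to intersect over the simple roots in $I$, since these generate the root lattice of $L$. Both descriptions are diagonalizable subgroups of $T$ cut out by the characters $\alpha \in I$, and pulling back characters along $\pi$ identifies them. Equivalently, one can phrase this via character groups: $X^*(Z(L)) = X^*(T)/\langle I \rangle$ and $X^*(Z(L_{\ad})) = X^*(T_{\ad})/\langle I \rangle$. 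The splitting $X^*(T) \cong X^*(T_{\ad}) \oplus X^*(Z(G))$ then induces $X^*(T)/\langle I\rangle \cong X^*(T_{\ad})/\langle I \rangle \oplus X^*(Z(G))$, because $\langle I \rangle \subset X^*(T_{\ad})$. This gives the isomorphism of diagonalizable groups directly.
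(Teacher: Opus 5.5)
Your proposal is correct and is essentially the paper's argument: the paper writes down the exact sequence $1 \rightarrow Z(G) \rightarrow Z(L) \rightarrow Z(L_{\ad}) \rightarrow 1$ and splits it by restricting the chosen retraction $T \rightarrow Z(G)$ to $Z(L)$. The resulting isomorphism is exactly the restriction of your map $(s,\pi)$. Your scheme-theoretic identity $Z(L) = \pi^{-1}(Z(L_{\ad}))$, or equivalently your character-group computation, supplies the exactness of that sequence, which the paper asserts without proof.
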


\begin{proof}
    Regardless of whether $Z(G)$ is a torus, there is always an exact sequence $1 \rightarrow Z(G) \rightarrow Z(L) \rightarrow Z(L_{\ad}) \rightarrow 1$. The composition of the inclusion $Z(L) \rightarrow T$ with  \eqref{eq-split} provides a splitting $Z(L) \rightarrow Z(G)$.
\end{proof}

We now recall the definition of minuscule parabolics. First, recall that the monoid of dominant cocharacters $X_*(T)^+ \subset X_*(T)$ consists of those $\mu$ such that $\langle \alpha_i, \mu \rangle \geq 0$ for all $\alpha_i \in \Delta$. There is a partial order $\leq$ on $X_*(T)^+$ such that $\mu_1 \leq \mu_2 $ if and only if $\mu_2 - \mu_1$ is a non-negative integer combination of simple coroots.

The quotient $T \rightarrow T_{\ad}$ induces a natural map $X_*(T) \rightarrow X_*(T_{\ad})$ which preserves the pairing with elements of $\Delta = \Delta_{\ad}$, and hence there is a map $X_*(T)^+ \rightarrow X_*(T_{\ad})^+$. Each connected component of the poset $X_*(T)^+$ maps isomorphically onto a connected component in $X_*(T_{\ad})^+$. Furthermore, if $Z(G)$ is a torus the maps $X_*(T) \rightarrow X_*(T_{\ad})$ and $X_*(T)^+ \rightarrow X_*(T_{\ad})^+$ are surjective by \eqref{eq-split}.

In what follows we let $\hat{G}$ be the Langlands dual group of $G$, i.e.~ the split reductive group over $\mathbb{C}$ with root datum dual to that of $G$. Let $N_G(T) \subset G$ be the normalizer of $T$, and let $W = N_G(T)/T $ be the Weyl group, which also identifies with the Weyl groups of $G_{\ad}$ and $\hat{G}$.

\begin{definition}
    Let $\mu \in X_*(T)^+$ be a cocharacter whose image $\mu_{\ad} \in X_*(T_{\ad})^+$ is nonzero. We say that $\mu$ is \emph{minuscule} if it satisfies any of the following equivalent conditions (cf.~\cite[Lemma 1.1]{NP01}).

\begin{enumerate}
    \item We have $| \langle \alpha,\mu \rangle | \leq 1$ for every $\alpha \in R$.
    \item The cocharacter $\mu$ is minimal with respect to the partial order on $X_*(T)^+$.
    \item The weights of the irreducible algebraic representation $V_\mu$ of $\hat{G}$ of highest weight $\mu$ belong to the single Weyl orbit $W \cdot \mu \subset X_*(T) = X^*(\hat{T})$.
\end{enumerate}
\end{definition}

Recall that $\Delta_{\ad}$ is a free $\mathbb{Z}$-basis of $X^*(T_{\ad})$. For each $\alpha_i \in \Delta$ let $\omega_i^\vee \in X_*(T_{\ad})$ be the corresponding fundamental coweight, i.e.~$\langle \alpha_j, \omega_i^\vee \rangle = \delta_{ij}$. The $\omega_i^\vee$ form a free $\mathbb{Z}$-basis of $X_*(T_{\ad})$. Every minuscule cocharacter of $T_{\ad}$ is a fundamental coweight, but the converse is not true in general. The minuscule cocharacters of $T_{\ad}$ are in one-to-one correspondence with nonzero elements of the (finite) quotient $X_*(T_\ad) / \mathbb{Z}[\Delta_\ad^\vee]$, in the sense that each equivalence class contains a unique minuscule representative.

\begin{definition}
    We say that a standard parabolic $P \subset G$ is \emph{minuscule} if it  is associated to a subset of simple roots of the form $I = \Delta \setminus \{\alpha_i\}$ for some $\alpha_i \in \Delta = \Delta_\ad$, and if the fundamental coweight $\omega_i^\vee \in X_*(T_{\ad})$ associated to $\alpha_i$ is minuscule. 
\end{definition}

When $I = \Delta \setminus \{\alpha_i\}$ the center of the Levi $L$ admits a natural $\mathbb{G}_m$ factor as below, on which we will later construct a Kloosterman sheaf in the minuscule case.

\begin{lemma} \label{GmLevi}
Let $L \subset G$ be the Levi factor associated to a subset of the form $I = \Delta \setminus \{\alpha_i\}$. If $Z(G)$ is a torus, then a choice of isomorphism as in \eqref{eq-split} determines an isomorphism $$Z(L) \cong Z(G) \times \mathbb{G}_m.$$
\end{lemma}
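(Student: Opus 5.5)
By Corollary \ref{cor:splitting-of-center-of-levi-part}, the chosen isomorphism \eqref{eq-split} already gives $Z(L) \cong Z(G) \times Z(L_{\ad})$. So it is enough to show that $Z(L_{\ad}) \cong \mathbb{G}_m$ canonically, namely that the restriction of $\alpha_i$ to $Z(L_{\ad})$ is an isomorphism $Z(L_{\ad}) \to \mathbb{G}_m$. This also matches how the isomorphism is used in the second theorem of the introduction.

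I will compute $Z(L_{\ad})$ inside $T_{\ad}$. For a standard Levi, $Z(L_{\ad})$ is the intersection of the kernels of the roots of $L_{\ad}$. These are generated by the simple roots in $I = \Delta \setminus \{\alpha_i\}$, so
$$Z(L_{\ad}) = \bigcap_{j \neq i} \ker(\alpha_j \colon T_{\ad} \to \mathbb{G}_m).$$
Since $\Delta_{\ad}$ is a $\mathbb{Z}$-basis of $X^*(T_{\ad})$, the product of the simple roots gives an isomorphism $T_{\ad} \xrightarrow{\sim} \mathbb{G}_m^{\Delta}$. Under this isomorphism the intersection of kernels above becomes the coordinate factor $\mathbb{G}_m$ indexed by $\alpha_i$. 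Hence $\alpha_i|_{Z(L_{\ad})}$ is an isomorphism, with inverse the fundamental coweight $\omega_i^\vee \colon \mathbb{G}_m \to T_{\ad}$. In particular $Z(L_{\ad})$ is a torus rather than some disconnected diagonalizable group.

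The one step that needs care is the claim that $Z(L_{\ad})$ is exactly this scheme-theoretic intersection of root kernels in $T_{\ad}$, for instance over a general base. I would justify it as follows. The center of a split reductive group with maximal torus $T_{\ad}$ is the intersection of the kernels of its roots. The roots of $L_{\ad}$ are the $\mathbb{Z}$-span of $I$ intersected with $R$, so it suffices to take kernels of the simple roots in $I$. Combining this with the splitting from Corollary \ref{cor:splitting-of-center-of-levi-part} gives $Z(L) \cong Z(G) \times \mathbb{G}_m$.
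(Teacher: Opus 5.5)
Your proposal is correct and takes essentially the same route as the paper. The paper's proof combines Corollary \ref{cor:splitting-of-center-of-levi-part} with the fact that the composition $Z(L_{\ad}) \to T_{\ad} \xrightarrow{\alpha_i} \mathbb{G}_m$ is an isomorphism, which it asserts without further argument. You justify that fact by identifying $Z(L_{\ad})$ with $\bigcap_{j \neq i} \ker \alpha_j$ inside $T_{\ad} \cong \mathbb{G}_m^{\Delta}$, which is a valid expansion of the paper's one-line argument.
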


\begin{proof}
    This follows from Corollary \ref{cor:splitting-of-center-of-levi-part} and the fact that the composition $Z(L_\ad) \rightarrow T_{\ad} \xrightarrow{\alpha_i} \mathbb{G}_m$ is an isomorphism.
\end{proof}

The following result will be used to handle reductive groups with arbitrary centers.

\begin{lemma} \label{lemm-G'} Let $(G,T)$ be a split reductive group and a split maximal torus. Then there exists another such pair $(G', T')$ along with an embedding $(G,T) \rightarrow (G', T')$ such that:
\begin{enumerate}
    \item $Z(G')$ is a torus.
    \item $Z(G)$ remains central in $G'$.
    \item The inclusion $T \rightarrow T'$ splits (non-canonically).
    \item The induced map $G_\ad \rightarrow G'_\ad$ is an isomorphism.
\end{enumerate}
\end{lemma}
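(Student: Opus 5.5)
The plan is the standard $z$-extension construction: enlarge $T$ by a torus that absorbs $Z(G)$, then take the pushout. First I embed $Z(G)$ into a split torus. Since $G$ and $T$ are split, $Z(G) \subset T$ is a diagonalizable group of multiplicative type, and $X^*(Z(G))$ is a finitely generated quotient of $X^*(T)$. I will take $S = T$ itself and use the diagonal embedding $Z(G) \hookrightarrow T$. Then I define
$$G' = (G \times T)/Z(G), \qquad T' = (T \times T)/Z(G),$$
where $Z(G)$ sits antidiagonally, $z \mapsto (z, z^{-1})$. The map $(G,T) \to (G',T')$ is induced by $g \mapsto (g,1)$. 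It is injective because $Z(G) \cap (G \times 1)$ is trivial under the antidiagonal embedding.

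Next I check the four properties.

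\textbf{Property (4).} $G'$ is connected reductive with derived group equal to the image of $G_{\mathrm{der}}$. Its center contains the image of $Z(G) \times T$. Modding out by $Z(G')$ kills the second factor $T$ and all of $Z(G)$, so $G'_{\ad} = G/Z(G) = G_{\ad}$ via the natural map.

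\textbf{Property (2).} $Z(G)$ is central in $G'$, since $G \times T$ commutes with $Z(G) \times 1$.

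\textbf{Property (1).} I compute $Z(G') = (Z(G) \times T)/Z(G)$. The projection to the second factor gives an isomorphism with $T$: the map $(z,t) \mapsto z t$ is a well-defined homomorphism $(Z(G)\times T)/Z(G) \to T$. Its kernel is exactly the antidiagonal, and it is surjective. So $Z(G') \cong T$ is a torus. I still need $Z(G')$ to be exactly this image. That follows because an element $(g,t)$ is central only if $g$ is central in $G$, by centrality modulo $Z(G)$, which is an easy computation in the root datum.

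\textbf{Property (3).} I need a splitting $T' \to T$ of $T \to T'$. The map $(t_1,t_2) \mapsto t_1 t_2$ sends the antidiagonal to $1$, so it descends to $T' \to T$. Composed with $t \mapsto (t,1)$ it is the identity. Hence the inclusion splits.

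Finally, $T'$ is a split maximal torus of $G'$, being a quotient of a split torus and the image of a maximal torus. All of these constructions work over a general base, since we are only taking quotients of split groups by central diagonalizable subgroups, and the group $(G \times T)/Z(G)$ exists as the fppf quotient. I expect the main point requiring care to be the identification $Z(G') = $ image of $Z(G) \times T$. I would check this on root data: $X^*(T') = \{(\lambda,\mu) : \lambda|_{Z(G)} = \mu|_{Z(G)}\}$, and the center is cut out by the roots $(\alpha,0)$, so it is computed formally.
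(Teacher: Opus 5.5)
Your proposal is correct and is essentially the paper's construction: the paper forms the pushout $G' = (S \times G)/N$ along an embedding $Z(G) \rightarrow S$, suggests $S = T$, and identifies $Z(G') \cong S$, which is exactly your antidiagonal quotient. The only cosmetic difference is that you write down the splitting $T' \rightarrow T$, $(t_1,t_2) \mapsto t_1 t_2$, explicitly (this uses $S = T$), whereas the paper invokes the character-lattice argument of Lemma \ref{lem-split}.
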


\begin{proof}
    Choose a split torus $S$ and an embedding $\varphi \colon Z(G) \rightarrow S$ (for example, we can take $S=T$). Let $G'$ be the pushout of $Z(G) \rightarrow G$ along $Z(G) \rightarrow S$, that is, $G'$ is the quotient of $S \times G$ by the central normal subgroup $N = \{(\varphi(z^{-1}), z) \: \mid \: z \in Z(G)\}$. Then $S \times G \rightarrow G'$ is a surjection of split reductive groups with central kernel, so $T' = (S \times T)/N$ is a maximal torus and $Z(G') = (S \times Z(G))/N$. We take for the inclusion $(G,T) \rightarrow (G', T')$ the one induced by the obvious maps. The natural map $S \rightarrow Z(G')$ is an isomorphism, so $Z(G')$ is a torus. Since $T$ is a torus the inclusion $T \rightarrow T'$ also splits as in Lemma \ref{lem-split}. The remaining claims follow immediately.
\end{proof}

\begin{example}
    Let $G = \SL_n$ with the diagonal torus $T$. Then $Z(G) = \mu_n$, and we may take $S = \mathbb{G}_m$ with the canonical embedding $\varphi \colon \mu_n \rightarrow \mathbb{G}_m$. The map $\mathbb{G}_m \times \SL_n \rightarrow \GL_n$ given by the diagonal central embedding on the first factor and the usual inclusion on the second factor induces an isomorphism $G' \cong \GL_n$ with diagonal torus $T'$.
\end{example}

\section{Geometric Crystals} \label{sec:Crystal} We continue to use the notation from Section \ref{sect:prelim} concerning the split reductive algebraic group $G$. Additionally, we let $U_- \subset B_-$ be the opposite unipotent radical and Borel. 

For each root we choose a trivialization $u_\alpha \colon  \mathbb{G}_a  \rightarrow  U_\alpha $ of the corresponding root group, and for each $w \in W$ we choose a representative $\dot{w} \in N_G(T)$. We require that these choices satisfy the compatibilities outlined in \cite[\S 2.2]{LamTemplier2024}, which amounts to choosing a homomorphism $\SL_2 \rightarrow G$ for each pair of a positive root and its negative. In fact, we can make these choices for a split model of $G_{\text{sc}}$, the simply connected cover of the derived subgroup of $G$, defined over $\mathbb{Z}$. Then our choices of root trivializations (which just amount to signs) and $\dot{w}$ will be defined over $\mathbb{Z}$ and compatible with passage to $G_{\ad}$ and  $G'$ as in Lemma \ref{lemm-G'}. The $\dot{w}$ are uniquely determined by the lifts of the simple reflections by taking a product of lifts in a reduced expression.
Additionally, we define the following additive character.
\begin{itemize}
    \item Let $\phi \colon U \rightarrow \mathbb{G}_a$ be the homomorphism given by the sum of the simple root groups with respect to our chosen trivializations.
\end{itemize}

Our next task is to define the geometric crystals associated to $G$ as in \cite{BK07}.

\begin{lemma} \label{lem-Uw}
    Let $U(w) = U \cap \dot{w} U_- \dot{w}^{-1}$ with the reduced subscheme structure. Then the multiplication map
$$T \times U(w) \times U \rightarrow T U \dot{w} U, \quad (t, u_1, u_2) \mapsto t u_1 \dot{w} u_2$$ is an isomorphism, where the target is a locally closed subscheme of $G$. 
\end{lemma}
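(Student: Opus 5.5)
This is the standard Bruhat cell decomposition. I would deduce it from the uniqueness of Bruhat coordinates in the big cell, after translating by $\dot{w}$.

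\textbf{Step 1: factor $U$.} Put $U'(w) = U \cap \dot{w} U \dot{w}^{-1}$. Both $U(w)$ and $U'(w)$ are closed subgroups of $U$ stable under $T$-conjugation. They are generated by the root groups $U_\alpha$ with $\alpha \in R^+ \cap w R^-$ and $\alpha \in R^+ \cap wR^+$, respectively. Since $R^+$ is the disjoint union of these two sets, the product map $U(w) \times U'(w) \to U$ is an isomorphism of schemes. This is the standard fact that $U$ is the product of the root groups, taken in any order, for a closed set of roots; see e.g.\ SGA3 Exp.~XXII or Borel, \emph{Linear Algebraic Groups}, 14.4. I would cite this rather than reprove it, and it holds over $\mathbb{Z}$ for the chosen pinning.

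\textbf{Step 2: rewrite the cell.} Write $TU\dot{w}U = U T \dot{w} U$. Using Step 1, write $u = u_1 u'$ with $u_1 \in U(w)$ and $u' \in U'(w)$. Then $u'\dot{w} = \dot{w}(\dot{w}^{-1}u'\dot{w})$ and $\dot{w}^{-1}u'\dot{w} \in U$, so it can be absorbed into the right-hand $U$. Moving $t$ past $u_1$ by conjugation, since $T$ normalizes $U(w)$, shows that the multiplication map $m \colon (t,u_1,u_2) \mapsto t u_1 \dot{w} u_2$ is surjective onto $B\dot{w}B$. It is the composite of this rewriting with the action map.

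\textbf{Step 3: injectivity, scheme-theoretically.} Consider $g = t u_1 \dot{w} u_2$. Then $\dot{w}^{-1} t u_1 \dot{w} = \dot{w}^{-1}t\dot{w}\cdot \dot{w}^{-1}u_1\dot{w}$, which lies in $T \cdot U_-$. So
\[
\dot{w}^{-1} g = (\dot{w}^{-1}t\dot{w})\,(\dot{w}^{-1}u_1\dot{w})\,u_2 \in T U_- U .
\]
The open big cell $U_- T U$ has the property that multiplication $U_-\times T\times U \to U_-TU$ is an isomorphism onto an open subscheme of $G$; the same holds for the ordering $T U_- U$ after conjugating by $T$. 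Hence $t$, $u_1$ and $u_2$ are recovered from $g$ by morphisms.

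Concretely, the inverse of $m$ is
\[
g \mapsto \pi_T(\dot{w}^{-1}g)^{\dot{w}},\qquad \dot{w}\,\pi_{U_-}(\dot{w}^{-1}g)\,\dot{w}^{-1},\qquad \pi_U(\dot{w}^{-1}g),
\]
where $\pi_T,\pi_{U_-},\pi_U$ are the big-cell coordinate projections. These are defined on $\dot{w}\cdot(TU_-U)$, which is an open subscheme of $G$ containing $TU\dot{w}U$. We must also check that $\dot{w}\pi_{U_-}(\dot{w}^{-1}g)\dot{w}^{-1}$ lands in $U$. This holds on the image because $\dot{w}^{-1}U(w)\dot{w} = U_- \cap \dot{w}^{-1}U\dot{w}$.

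\textbf{Step 4: the locally closed subscheme structure.} Step 3 shows that $m$ is an immersion, since it has a left inverse defined on an open subscheme. Its image is the orbit $B\dot{w}B$, which is locally closed. We give this orbit the induced reduced structure; since the source is smooth, hence reduced, the immersion identifies the source with it.

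The main obstacle is the care in Step 3: we need the big cell isomorphism in the ordering $TU_-U$, and we need the recovered $U_-$-component to lie in $\dot{w}^{-1}U(w)\dot{w}$ as a scheme and not just on points. I would handle this by noting that $U_- = (U_-\cap \dot{w}^{-1}U\dot{w})\cdot(U_-\cap\dot{w}^{-1}U_-\dot{w})$ by Step 1 applied to $U_-$. Uniqueness of big-cell coordinates then forces the second factor to be trivial on $\dot{w}^{-1}\cdot TU\dot{w}U$.
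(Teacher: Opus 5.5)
Your proof is correct and follows the paper's argument. The paper translates by $\dot{w}^{-1}$ and uses $\dot{w}^{-1}U\dot{w} = (\dot{w}^{-1}U\dot{w}\cap U_-)(\dot{w}^{-1}U\dot{w}\cap U)$, which is your Step 1 conjugated by $\dot{w}$, to get $\dot{w}^{-1}TU\dot{w}U = T\,\dot{w}^{-1}U(w)\dot{w}\,U$; it then concludes from the open embedding $T\times U_-\times U\to G$. Your Steps 3--4 just make the scheme-theoretic part explicit: the big cell translated by $\dot{w}$ identifies the open set $\dot{w}TU_-U$ with $T\times \dot{w}U_-\dot{w}^{-1}\times U$, and under this identification $m$ is the closed inclusion of $T\times U(w)\times U$.
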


\begin{proof}
    Note that
$$\dot{w}^{-1} T U \dot{w} U = T \dot{w}^{-1} U \dot{w} U = T (\dot{w}^{-1} U \dot{w} \cap U_-) U = T \dot{w}^{-1} U(w) \dot{w} U.$$ The result follows since the multiplication map $T \times U_- \times U \rightarrow G$ is an open embedding.
\end{proof}

Since $T$ normalizes $U$ and $U(w)$, we have $T U \dot{w} U = U T \dot{w} U$, so the analogue of Lemma \ref{lem-Uw} also holds if we swap the order of $T$ and $U(w)$. Let
$$B_-^w := U \dot{w} U \cap B_-$$ with the reduced subscheme structure. Then the multiplication map induces an isomorphism from $T \times B_-^w$ onto $U T \dot{w} U \cap B_-$.

\begin{lemma} \label{Bad}
    The quotient $G \rightarrow G_{\ad}$ restricts to an isomorphism $B_-^w \rightarrow (B_{\ad})_-^w$.
\end{lemma}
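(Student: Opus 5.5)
The plan is to show that the quotient map $\pi \colon G \rightarrow G_{\ad}$ restricts to a bijective, in fact isomorphic, map on $B_-^w$ by working inside the big cell. Since $\pi$ has central kernel $Z(G) \subset T$, it restricts to isomorphisms $U \rightarrow U_{\ad}$ and $U_- \rightarrow (U_{\ad})_-$, and it sends $T \rightarrow T_{\ad}$ with kernel $Z(G)$. By our conventions the lift $\dot{w}$ is chosen compatibly, so $\pi(\dot{w}) = \dot{w}_{\ad}$ is the chosen lift in $G_{\ad}$. Hence $\pi(U\dot{w}U) \subset U_{\ad}\dot{w}_{\ad}U_{\ad}$ and $\pi(B_-) \subset (B_{\ad})_-$, so $\pi$ restricts to a morphism $B_-^w \rightarrow (B_{\ad})_-^w$.

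Next I would describe $B_-^w$ through Lemma \ref{lem-Uw}. An element of $U\dot{w}U$ can be written uniquely as $u_1\dot{w}u_2$ with $u_1 \in U(w)$ and $u_2 \in U$, so $U\dot{w}U \cong U(w) \times U$. The same holds in $G_{\ad}$, and $\pi$ identifies $U(w)\times U$ with $U_{\ad}(w)\times U_{\ad}$, so $\pi \colon U\dot{w}U \rightarrow U_{\ad}\dot{w}_{\ad}U_{\ad}$ is an isomorphism. Moreover $\pi^{-1}((B_{\ad})_-) = B_- $ because $\ker \pi = Z(G) \subset T \subset B_-$. Therefore
$$B_-^w = U\dot{w}U \cap \pi^{-1}\big((B_{\ad})_-\big),$$
and this is precisely the preimage of $(B_{\ad})_-^w$ under the isomorphism $U\dot{w}U \cong U_{\ad}\dot{w}_{\ad}U_{\ad}$. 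This gives an isomorphism of schemes from the scheme-theoretic intersection onto its counterpart. Passing to reduced subscheme structures on both sides then preserves the isomorphism.

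The step I expect to need the most care is the compatibility $\pi(\dot{w}) = \dot{w}_{\ad}$, together with the fact that $U\dot{w}U$ is a locally closed subscheme on which $\pi$ is an isomorphism rather than merely a bijection on points. The first follows from choosing everything over $\mathbb{Z}$ via $G_{\text{sc}}$, as explained above. The second follows from Lemma \ref{lem-Uw}: restricting its isomorphism to the closed subscheme $t = 1$ gives $U(w)\times U \cong U\dot{w}U$ scheme-theoretically, and this is functorial in $\pi$.
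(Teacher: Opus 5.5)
Your proof is correct and follows essentially the same route as the paper. Both arguments use the parametrization $U\dot{w}U = U(w)\dot{w}U$ from Lemma \ref{lem-Uw} to see that $U\dot{w}U$ maps isomorphically onto $U_{\ad}\dot{w}_{\ad}U_{\ad}$, and then observe that $B_-$ is the full preimage of $(B_{\ad})_-$ because the kernel $Z(G)$ is central and contained in $T$. Your explicit checks of $\pi(\dot{w})=\dot{w}_{\ad}$ and of the scheme structures spell out what the paper's two-line proof leaves implicit.
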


\begin{proof} The identity $\dot{w}^{-1} U \dot{w} U = \dot{w}^{-1} U(w) \dot{w} U$ implies that $U \dot{w} U$ maps isomorphically onto its image in $G_{\ad}$. Since $B_{-}$ is the preimage of $(B_{\ad})_-$ in $G$,  the restriction of this isomorphism over $(B_{\ad})_-^w$ provides the necessary inverse.
\end{proof}

Fix a subset $I \subset \Delta$ and the associated standard parabolic subgroup $P \subset G$ with Levi factor $L$. Let $W_P$ be the subgroup of $W$ generated by the simple reflections in $I$. Let $w_0^P \in W_P$ be the longest element, and let $w_P = w_0^P w_0$. This is the minimal length representative of the longest element in $W_P \backslash W$.

\begin{definition} \label{def:Crystal}
    The \emph{geometric crystal} associated to $(G, P)$ is the (reduced) intersection
    $$X := U Z(L) \dot{w}_P U \cap B_-.$$
\end{definition}

By the discussion following Lemma \ref{lem-Uw}, the multiplication map $Z(L) \times B_-^{w_P} \rightarrow X$ is an isomorphism. Let $X_{\ad}$ denote the geometric crystal associated to $(G_{\ad}, P_{\ad})$. The quotient $G \rightarrow G_{\ad}$ restricts to a map $X \rightarrow X_{\ad}$ which factors as the product of the quotient $Z(L) \rightarrow Z(L_{\ad})$ and the isomorphism in Lemma \ref{Bad}.

\begin{definition} \label{def-Xmaps}
    We define the following algebraic maps associated to the crystal $X$.

    \begin{enumerate}
        \item The \emph{highest weight map} is
$$\pi \colon X \rightarrow Z(L), \quad u_1 t \dot{w}_P u_2 \mapsto t.$$
    \item The \emph{weight map} is
$$\gamma \colon X \rightarrow T, \quad x \mapsto x \text{ mod } U_- \in B_-/U_- = T.$$
    \item The \emph{decoration} is
$$f \colon X \rightarrow \mathbb{A}^1, \quad u_1 t \dot{w}_P u_2 \mapsto \phi(u_1) + \phi(u_2).$$
    \end{enumerate}
\end{definition}

We note that the decoration factors through the quotient $X \rightarrow X_{\ad}$.
Recall that if $Z(G)$ is a torus, a choice of isomorphism $T \cong Z(G) \times T_{\ad}$ as in \eqref{eq-split} determines an isomorphism $Z(L) \cong Z(G) \times Z(L_{\ad})$ as in Corollary \ref{cor:splitting-of-center-of-levi-part}.
The latter induces an isomorphism $X \cong Z(G) \times X_{\ad}$. 

\begin{corollary} \label{cor-weightSplit}
If $Z(G)$ is a torus, a choice of isomorphism as in \eqref{eq-split} determines a lift $\gamma_{\ad}^T \colon X_{\ad} \rightarrow T$ of the weight map $\gamma_{\ad}\colon X_{\ad} \rightarrow T_\ad$.
\end{corollary}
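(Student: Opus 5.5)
The idea is to define $\gamma_{\ad}^T$ by composing the weight map of $X$ with a section $X_{\ad} \rightarrow X$ coming from the splitting. Fix an isomorphism $T \cong Z(G) \times T_{\ad}$ as in \eqref{eq-split}, and write $s \colon T_{\ad} \rightarrow T$ for the induced section of the quotient $T \rightarrow T_{\ad}$. By Corollary \ref{cor:splitting-of-center-of-levi-part}, this splitting gives $Z(L) \cong Z(G) \times Z(L_{\ad})$, and hence a section $Z(L_{\ad}) \rightarrow Z(L)$ of the quotient map; concretely it is the restriction of $s$ to $Z(L_{\ad})$, which lands in $Z(L)$ by the proof of that corollary. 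Combining this with the isomorphism $Z(L) \times B_-^{w_P} \cong X$ and the identification $B_-^{w_P} \cong (B_{\ad})_-^{w_P}$ of Lemma \ref{Bad}, we obtain the closed embedding
$$\sigma \colon X_{\ad} \cong Z(L_{\ad}) \times (B_{\ad})_-^{w_P} \longrightarrow Z(L) \times B_-^{w_P} \cong X.$$
It corresponds to the factor $\{1\} \times X_{\ad}$ in $X \cong Z(G) \times X_{\ad}$, and it is a section of the quotient map $X \rightarrow X_{\ad}$. We then set $\gamma_{\ad}^T := \gamma \circ \sigma$.

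To check that this is a lift, we need the weight maps to be compatible with the quotient. The map $G \rightarrow G_{\ad}$ sends $B_-$ to $(B_{\ad})_-$ and $U_-$ isomorphically onto $(U_{\ad})_-$. So reducing modulo $U_-$ commutes with the quotient, and the square relating $\gamma \colon X \rightarrow T$ and $\gamma_{\ad} \colon X_{\ad} \rightarrow T_{\ad}$ via $X \rightarrow X_{\ad}$ and $T \rightarrow T_{\ad}$ commutes. Since $\sigma$ is a section of $X \rightarrow X_{\ad}$, composing $\gamma_{\ad}^T$ with $T \rightarrow T_{\ad}$ gives $\gamma_{\ad}$, as required.

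The only point needing care is that $\sigma$ really is a section. On the $Z(L)$ factor this follows because $s$ is a section of $T \rightarrow T_{\ad}$. On the $B_-^{w_P}$ factor it follows because the isomorphism of Lemma \ref{Bad} is induced by the quotient map itself. Finally, the multiplication isomorphism $Z(L) \times B_-^{w_P} \rightarrow X$ is compatible with its analogue for $G_{\ad}$, since the quotient is a group homomorphism.
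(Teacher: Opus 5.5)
Your proposal is correct and is essentially the paper's argument. The paper lifts the two factors of $X_{\ad} = Z(L_{\ad}) \times (B_{\ad})_-^{w_P}$ separately: on $Z(L_{\ad})$ it uses the section $T_{\ad} \rightarrow T$, and on $(B_{\ad})_-^{w_P}$ it uses $\gamma|_{B_-^{w_P}}$ through Lemma \ref{Bad}. This is exactly your $\gamma \circ \sigma$, since $\gamma(zb) = z\gamma(b)$ for $z \in Z(L) \subset T$. Your explicit check that the result lifts $\gamma_{\ad}$ (reduction mod $U_-$ commutes with the quotient) fills in a step the paper leaves implicit.
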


\begin{proof}
We have $X_{\ad} = Z(L_\ad) \times (B_{\ad})_-^{w_P}$. For the first factor, we lift the inclusion $Z(L_\ad) \rightarrow T_{\ad}$ to $T$ along our chosen inclusion $T_{\ad} \rightarrow T$. For the second factor, by Lemma \ref{Bad} we may lift $(B_{\ad})_-^{w_P} \rightarrow T_{\ad}$ to $T$ by using the corresponding map $B_-^{w_P} \rightarrow T$ for $G$.
\end{proof}

When $Z(G)$ is a torus, we can summarize the relationship between $X$ and $X_{\ad}$ in the following diagram, which is commutative except for the lower triangle in the top square.

\begin{equation}\label{Xmaps}\xymatrix{
T \ar[r] & T_{\ad} & \\ 
X \ar[r] \ar[d]^-\pi \ar[u]^-{\gamma} & X_{\ad} \ar@{.>}_{\gamma_{\ad}^T}[lu] \ar[d]^-{\pi_{\ad}} \ar[u]_-{\gamma_\ad} \ar[r]^{f_\ad} & \mathbb{A}^1 \\
Z(L) \ar[r] & Z(L_{\ad}) &}\end{equation}
Here the three parallel horizontal maps are the projections which forget the factor $Z(G)$, and a choice of isomorphism as in \eqref{eq-split} determines sections of all of these. In what follows we are essentially reduced to studying the adjoint crystal $X_{\ad}$ with the slightly more general (lifted) weight map $\gamma_{\ad}^T \colon X_{\ad} \rightarrow T$.

When $Z(G)$ is not a torus a similar diagram exists, except there is no lift $\gamma_{\ad}^T \colon X_{\ad} \rightarrow T$ in general. Instead, when $Z(G)$ is not a torus we will choose an embedding $(G, T) \rightarrow (G',T')$ as in Lemma \ref{lemm-G'}, which induces an embedding of crystals
\begin{equation} \label{XtoX'} X = Z(L) \times B_-^{w_P} \rightarrow Z(L') \times B_-^{w_P} \cong Z(L') \times (B')_-^{w_{P'}} = X'.\end{equation}

\section{Generic representations and Bessel functions}\label{sec:generic-representations-and-bessel-functions}
Let $\mathbb{F}$ be a finite field of order $q$, and let $p$ be the characteristic of $\mathbb{F}$. We keep the notation surrounding $G$ and its geometric crystal $X$ from Sections \ref{sect:prelim} and \ref{sec:Crystal}, and we work over the base $\mathbb{F}$. Additionally, we fix the following data.
\begin{itemize}
    \item Let $\psi \colon \mathbb{F}_p \rightarrow \mathbb{C}^\times$ be a nontrivial additive character. By abuse of notation, we also denote by $\psi \colon \mathbb{F} \rightarrow \mathbb{C}^\times$ the composition of the trace homomorphism $\mathbb{F} \rightarrow \mathbb{F}_p$ with $\psi \colon \mathbb{F}_p \rightarrow \mathbb{C}^\times$.

    \item Let $\chi \colon T(\mathbb{F}) \rightarrow \mathbb{C}^\times$ be a multiplicative character.

\end{itemize}

For the rest of Section \ref{sec:generic-representations-and-bessel-functions}, we will only need to consider the $\mathbb{F}$-points of varieties and so we will use the shorthand $G = G(\mathbb{F})$, $X = X(\mathbb{F})$, $\gamma = \gamma(\mathbb{F}) \colon X(\mathbb{F}) \rightarrow T(\mathbb{F})$, etc. Under this convention, $\psi \circ \phi$ (see the start of Section \ref{sec:Crystal}) is a generic character $U \rightarrow \mathbb{C}^\times$, i.e., it is nontrivial on every simple root subgroup. In fact, all of the results in this section are valid for any generic character of $U$, but in later sections we will always work with $\psi \circ \phi$.

Given $\chi$, we may consider the parabolically induced complex representation $I_{\chi} = \Ind{\opposite{B}}{G}{\chi}$ of the finite group $G$. The irreducible subrepresentations of $I_{\chi}$ for varying $\chi$ are called \emph{principal series representations}. The vector space underlying $I_{\chi}$ is equipped with the following inner product, which is invariant under the $G$-action:
$$\innerproduct{f_1}{f_2} = \sum_{g \in \opposite{B} \backslash G} f_1\left(g\right) \conjugate{f_2\left(g\right)}.$$

A nonzero vector $v \in I_{\chi}$ such that \begin{equation} \label{eq-psi} u \cdot v = \psi\left(\phi \left(u\right)\right) v\end{equation} for all $u \in U$ is called a \emph{$\fieldCharacter$-Whittaker vector}. 
In general, an irreducible representation of $G$ (not necessarily a principal series) admitting a $\fieldCharacter$-Whittaker vector is called \emph{generic}. The uniqueness of $\fieldCharacter$-Whittaker vectors follows from multiplicity-freeness of the Gelfand–Graev representations of $G$, see e.g.~\cite[Theorem 12.3.4]{DJ20}. In the case of principal series representations this is easy to prove and has an explicit formula, as follows.

\begin{lemma} \label{lemwhit}
    There exists a unique (up to scalar multiplication) $\fieldCharacter$-Whittaker vector $v \in I_{\chi}$. Explicitly, such a vector $v$ is supported on the double coset $\opposite{B} U$ and defined there by the formula
\begin{equation} \label{explicitpsi} v\left(t u' u\right) = \chi\left(t\right) \fieldCharacter\left(\phi\left(u\right)\right),\end{equation}
for all $t \in T$, $u' \in \opposite{U}$ and $u \in U$.
\end{lemma}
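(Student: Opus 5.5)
The plan is to use Mackey theory via the Bruhat decomposition, and then to check the explicit formula directly. Regard $I_\chi$ as functions $f\colon G\to\mathbb{C}$ with $f(bg)=\chi(b)f(g)$ for $b\in\opposite{B}$, where $\chi$ is inflated through $\opposite{B}\to T$ and $G$ acts by right translation. A vector $v$ satisfies \eqref{eq-psi} exactly when $v(gu)=\psi(\phi(u))v(g)$ for all $g\in G$ and $u\in U$. Such a $v$ is therefore determined by its values on representatives of the double cosets $\opposite{B}\backslash G/U$. By the Bruhat decomposition $G=\bigsqcup_{w\in W}\opposite{B}\dot w U$, we may take the representatives $\dot w$, $w\in W$.

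The first step is to find, for each $w$, the compatibility condition on the value $v(\dot w)$. Take $t\in T$, $u'\in\opposite{U}$, $u\in U$ with $t u'\dot w=\dot w u$. Then
\[\chi(t)v(\dot w)=v(\dot w u)=\psi(\phi(u))v(\dot w).\]
Taking $t=1$, $u'\in\opposite{U}\cap\dot w U\dot w^{-1}$ and $u=\dot w^{-1}u'\dot w$, we need $\psi(\phi(u))=1$ whenever $v(\dot w)\neq0$. Suppose $w\neq 1$. Then $w^{-1}$ sends some simple root $\alpha$ to a negative root. Equivalently, $\dot w U_\alpha\dot w^{-1}=U_{w\alpha}\subset\opposite{U}$, so $U_\alpha\subset\dot w^{-1}\opposite{U}\dot w\cap U$. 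Because $\psi\circ\phi$ is nontrivial on $U_\alpha$, this forces $v(\dot w)=0$. Hence $v$ is supported on $\opposite{B}U$, and it is determined there by the single value $v(1)$. This already gives uniqueness up to scalar.

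For existence, the second step is to show that \eqref{explicitpsi} is well defined. The multiplication map $T\times\opposite{U}\times U\to G$ is injective, which is the open-cell statement already used in the proof of Lemma \ref{lem-Uw}. So every element of $\opposite{B}U$ has a unique expression $tu'u$, and the formula defines a function on $\opposite{B}U$; we extend it by zero. It remains to check left $\opposite{B}$-equivariance and the Whittaker property.
\begin{itemize}
\item For $b=t_0u_0'\in\opposite{B}$ we have $b\,tu'u=(t_0t)\bigl(t^{-1}u_0't\,u'\bigr)u$. The value of $v$ there is $\chi(t_0)\chi(t)\psi(\phi(u))$, which is $\chi(b)v(tu'u)$.
\item Right multiplication by $U$ preserves the support, and it multiplies the value by $\psi(\phi(\cdot))$ because $\phi$ is a homomorphism.
\end{itemize}
Finally, $v(1)=1$, so $v\neq0$.

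I expect the only real point to be the vanishing argument for $w\neq1$, and specifically the choice of the simple root $\alpha$ with $w^{-1}\alpha<0$. It is cleanest to phrase this as: $\dot w^{-1}\opposite{U}\dot w\cap U$ contains the simple root subgroup $U_\alpha$ exactly when $w^{-1}\alpha\in R^-$, and such an $\alpha$ exists for every $w\neq1$. This is where genericity of $\psi\circ\phi$ is used. Everything else is formal.
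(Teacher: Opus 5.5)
Your proposal is correct and is essentially the paper's argument. The paper packages your double-coset compatibility analysis as Mackey's formula plus Frobenius reciprocity, and uses the same key point: for $w\neq 1$ the relevant intersection of $U$ with a conjugate of $U_-$ contains a simple root subgroup on which $\psi\circ\phi$ is nontrivial while $\chi$ is trivial. It then checks the explicit formula directly, as you do.

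One small fix: $\dot w^{-1}U_-\dot w\cap U\supset U_\alpha$ holds exactly when $w\alpha\in R^-$, not when $w^{-1}\alpha\in R^-$. Such a simple root $\alpha$ exists for every $w\neq 1$, so the argument is unaffected.
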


\begin{proof}
    By Mackey's formula and Frobenius reciprocity,
    $$\Hom_U(\psi \circ \phi, \Ind{\opposite{B}}{G}{\chi}) = \bigoplus_{w \in W} \Hom_{wB_-w^{-1} \cap U}(\psi \circ \phi, \chi_w),$$
where $\chi_w(u) = \chi(w^{-1}uw)$ for all $u \in wB_-w^{-1} \cap U$. Since $w^{-1}uw \in U_-$ then $\chi_w$ is in fact the trivial character. 
If $w$ is nontrivial then $wB_-w^{-1} \cap U$ contains at least one simple root subgroup, so by the definition of $\psi \circ \phi$ the corresponding summand above vanishes. If $w$ is trivial then $wB_-w^{-1} \cap U$ is also trivial, so the corresponding summand above has dimension $1$. To complete the proof it suffices to observe that the formula in the statement of the lemma defines a valid vector $v$ satisfying the requirements.
\end{proof}

The $\fieldCharacter$-Whittaker vector defined by \eqref{explicitpsi} necessarily belongs to a unique irreducible subrepresentation $\tau \subset I_{\chi}$; we henceforth denote this vector by $\whittakerVector{\tau}$.

\begin{definition}
     The \emph{Bessel function} attached to the principal series representation $\tau$ with respect to the character $\fieldCharacter$ is defined by the following formula. For $g \in G$,
$$\besselFunction_{\tau, \fieldCharacter}\left(g\right) = \frac{\innerproduct{\tau\left(g\right) \whittakerVector{\tau}}{\whittakerVector{\tau}}}{\innerproduct{\whittakerVector{\tau}}{\whittakerVector{\tau}}}.$$
\end{definition}

It is evident that the Bessel function $\besselFunction_{\tau, \fieldCharacter}$ does not depend on the particular $\fieldCharacter$-Whittaker vector we used, nor on the choice of inner product on $\tau$.

\begin{remark}
	Since the center $Z\left(G\right)$ acts on vectors of $\tau$ by its central character $\centralCharacter{\tau}$, we have the identity
	$$\besselFunction_{\tau, \fieldCharacter}\left(z g\right) = \centralCharacter{\tau}\left(z\right) \besselFunction_{\tau, \fieldCharacter}\left(g\right)$$ for any $g \in G$ and $z \in Z\left(G\right)$.
\end{remark}

Let $I \subset \Delta$ with associated parabolic $P = LU_P$, and geometric crystal $X$ with its associated maps $\pi$, $\gamma$, and $f$ from Definition \ref{def-Xmaps}.
\begin{lemma} \label{lem-quot}
    We have
$$\frac{\sizeof{\left(\specialWeylElement{P}^{-1} U \specialWeylElement{P}\right) \cap U}}{\sizeof{U}} = q^{-\dim G/P}.$$
\end{lemma}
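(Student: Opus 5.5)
We reduce the count to roots. Since $\dot{w}_P \in N_G(T)$, conjugation by $\dot{w}_P^{-1}$ permutes root subgroups: $\dot{w}_P^{-1} U_\alpha \dot{w}_P = U_{w_P^{-1}(\alpha)}$. Hence $\dot{w}_P^{-1} U \dot{w}_P$ is the unipotent group generated by $U_\beta$ for $\beta \in w_P^{-1}(R^+)$. Its intersection with $U$ is generated by $U_\beta$ with $\beta \in R^+ \cap w_P^{-1}(R^+)$; this is closed under addition inside $R^+$.

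By the standard product decomposition for such closed subsets of positive roots, the multiplication map gives an isomorphism of varieties $\prod_\beta U_\beta \cong (\dot{w}_P^{-1} U \dot{w}_P)\cap U$, taken over $\beta \in R^+ \cap w_P^{-1}(R^+)$ in any fixed order. The same holds for $U$ and all of $R^+$. The same reasoning as in the proof of Lemma \ref{lem-Uw} gives the complementary decomposition $U = U(w_P^{-1}) \cdot \big((\dot{w}_P^{-1} U \dot{w}_P)\cap U\big)$. Taking $\mathbb{F}$-points, each root group contributes a factor of $q$. The quotient in the statement is therefore $q^{-N}$, where
$$N = \sizeof{\{\beta \in R^+ : w_P(\beta) \in R^-\}} = \ell(w_P) = \ell(w_P^{-1}).$$

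It remains to show $\ell(w_P) = \dim G/P$. Write $w_P = w_0^P w_0$ and note $\ell(w_0) = \ell(w_0^P) + \ell(w_P)$, because $w_P$ is the minimal length representative of its coset. Then $\ell(w_P) = \sizeof{R^+} - \sizeof{R_P^+}$, where $R_P$ is the root system of $L$, generated by $I$. On the other hand, $P$ is generated by $B$ and $U_{-\alpha}$ for $\alpha \in I$. So $\mathrm{Lie}(P) = \mathrm{Lie}(T) \oplus \bigoplus_{\alpha \in R^+} \mathfrak{g}_\alpha \oplus \bigoplus_{\alpha \in R_P^+} \mathfrak{g}_{-\alpha}$, and $\dim G/P = \sizeof{R^-} - \sizeof{R_P^+} = \sizeof{R^+} - \sizeof{R_P^+}$.

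The only mildly delicate points are these: that the reduced-scheme intersection is the product of the stated root subgroups over $\mathbb{F}$, which is standard for split groups, and the length identity for $w_0^P w_0$. Neither is a real obstacle.
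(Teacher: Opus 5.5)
Your proof is correct and takes essentially the same route as the paper: both reduce the ratio to counting the root subgroups of $U$ that survive conjugation by $\dot{w}_P$, which gives $q^{-(\sizeof{R^+} - \sizeof{R_P^+})} = q^{-\dim G/P}$. The paper tracks the roots directly: $w_0^P$ sends exactly the positive roots spanned by $I$ to $R^-$, and $w_0$ sends them back. You instead phrase the same count as $\ell(w_P) = \ell(w_0) - \ell(w_0^P)$, and the complementary decomposition $U = U(w_P^{-1}) \cdot \bigl((\dot{w}_P^{-1} U \dot{w}_P) \cap U\bigr)$ you invoke is harmless but not needed.
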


\begin{proof}
    Recall that $w_P = w_0^P w_0$. The element $w_0^P$ sends precisely those $\alpha \in R^+$ generated by $I$ to $R^-$, and then $w_0$ sends these back to $R^+$. Thus, the value above is  $q^{-c}$ where $c$ is the number of $\alpha \in R^+$ not in the subset generated by $I$. Clearly $c= \dim U_P = \dim G/P$.
\end{proof}

We have the following alternative formula for  $\besselFunction_{\tau, \fieldCharacter}$ at the special elements $z \specialWeylElement{P}$.
\begin{theorem}\label{thmbessel}
	For any $z \in Z\left(L\right)$, we have $$\besselFunction_{\tau, \fieldCharacter}\left(z \specialWeylElement{P}\right) = q^{-\dim G/P} \sum_{\substack{x \in X\\
	\pi\left(x\right) = z}} \chi\left(\gamma\left(x\right)\right) \fieldCharacter^{-1}\left(f\left(x\right)\right).$$
\end{theorem}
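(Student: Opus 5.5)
Write $v = \whittakerVector{\tau}$ and $g = z\specialWeylElement{P}$. We compute numerator and denominator of the Bessel function directly from the explicit formula \eqref{explicitpsi} of Lemma \ref{lemwhit}, and then reparametrize the resulting sum by points of the crystal $X$.

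\textbf{Denominator.} Since $v$ is supported on $\opposite{B}U$ and $\opposite{B}\cap U$ is trivial, the cosets $\opposite{B}\backslash \opposite{B}U$ are in bijection with $U$. On this set $|v|=1$, so $\innerproduct{v}{v} = \sizeof{U}$.

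\textbf{Numerator.} We have
$$\innerproduct{\tau(g)v}{v} = \sum_{u\in U} v(u g)\,\conjugate{\psi(\phi(u))}.$$
The term $v(ug)$ is nonzero only when $u z\specialWeylElement{P} \in \opposite{B}U$, i.e.\ when $u z \specialWeylElement{P} = b u_2$ with $b\in \opposite{B}$ and $u_2 \in U$. In that case $v(ug) = \chi(\gamma(b))\,\psi(\phi(u_2))$, where $\gamma(b)$ is the image of $b$ in $B_-/U_- = T$.

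Now set $x := u^{-1} b = z\specialWeylElement{P}u_2^{-1}$. Then $x = u^{-1} z \specialWeylElement{P} u_2^{-1}$ lies in $U Z(L)\specialWeylElement{P} U$. It also lies in $\opposite{B}$ after left multiplication by $u \in U$, which is the wrong side. To fix this, rewrite the condition as
$$b = u z \specialWeylElement{P} u_2^{-1} \in U Z(L) \specialWeylElement{P} U \cap \opposite{B} = X.$$
So the nonzero terms correspond to pairs $(u,u_2)$ with $b := u z\specialWeylElement{P}u_2^{-1} \in X$. By Lemma \ref{lem-Uw}, $b$ determines $u\in U$, $t=z$ and $u_2$ only up to the ambiguity $U(w_P)$ versus $U$. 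More precisely, the factorization $u_1 t \specialWeylElement{P} u_2$ is unique once $u_1$ is restricted to $U(w_P)$. A general $u\in U$ factors as $u = u_1 u'$ with $u'\in U\cap \specialWeylElement{P}U\specialWeylElement{P}^{-1}$. Then $u' z \specialWeylElement{P} = z\specialWeylElement{P}u''$ with $u''\in \specialWeylElement{P}^{-1}U\specialWeylElement{P}\cap U$, and $u''$ is absorbed into $u_2$.

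\textbf{Counting fibres.} The map
$$\{(u,u_2) : b\in X\} \rightarrow \{x\in X : \pi(x) = z\}$$
is therefore surjective with fibres of size $\sizeof{\specialWeylElement{P}^{-1}U\specialWeylElement{P}\cap U}$. We need the summand to be constant on fibres. For this, use that $\phi$ is trivial on $\specialWeylElement{P}^{-1}U\specialWeylElement{P}\cap U$ intertwined appropriately. Concretely, we must check $\psi(\phi(u_2))\conjugate{\psi(\phi(u))} = \psi^{-1}(f(b))$ with $f(b) = \phi(u) + \phi(u_2^{-1})$. This reduces to the identity $\phi(u') = \phi(u'')$ for $u'' = \specialWeylElement{P}^{-1}z^{-1}u'z\specialWeylElement{P}$. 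It holds because $z \in Z(L)$ commutes with $u'$ when $u'$ lies in the Levi part, and $\specialWeylElement{P}$ permutes the simple roots of $L$ compatibly with the chosen pinning (the sign conventions of \cite[\S 2.2]{LamTemplier2024}). Note also that $u' \in U\cap \specialWeylElement{P}U\specialWeylElement{P}^{-1}$ lies in $U\cap L$, since $w_P^{-1}$ sends $R^+\setminus R_L^+$ to negative roots.

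\textbf{Conclusion.} Each nonzero term then equals $\chi(\gamma(x))\psi^{-1}(f(x))$. Dividing by $\sizeof{U}$ and applying Lemma \ref{lem-quot} to the fibre size gives the factor $q^{-\dim G/P}$, which yields the formula in Theorem \ref{thmbessel}.

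\textbf{Main obstacle.} The hard step is this fibre analysis: showing that $\specialWeylElement{P}$ conjugation preserves $\phi$ on $U_L$ with the right signs, and that $z$ acts trivially there. The signs are exactly what the compatible choices of lifts $\dot w$ in Section \ref{sec:Crystal} are designed to guarantee. I would first verify this by reducing to simple roots of $L$: $w_P$ maps $I$ to $I$, and $\specialWeylElement{P}$ is built from the chosen $\mathrm{SL}_2$ lifts.
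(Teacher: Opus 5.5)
Your proof is correct and is essentially the paper's argument. You show $\innerproduct{v}{v}=\sizeof{U}$, rewrite the numerator as a sum over pairs $(u_1,u_2)\in U^2$ with $u_1 z\dot w_P u_2\in X$, observe that each $x$ with $\pi(x)=z$ comes from exactly $\sizeof{\dot w_P^{-1}U\dot w_P\cap U}$ pairs, and finish with Lemma \ref{lem-quot}.

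The step you call the main obstacle is just the well-definedness of the decoration $f$, which the paper builds into Definition \ref{def-Xmaps}, so you may simply invoke it. If you do verify it, two points in your sketch need correcting: $\phi$ is not trivial on $\dot w_P^{-1}U\dot w_P\cap U$, and $w_P^{-1}$ maps $I$ onto $-w_0(I)$ rather than onto $I$ (these differ, e.g., for $\GL_n$ with $2i\neq n$ or for $\Esix$). The needed sign then follows from the standard fact that $\dot w\,u_\beta(x)\,\dot w^{-1}=u_{w\beta}(x)$ whenever $\beta$ and $w\beta$ are both simple.
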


\begin{proof}

We have that $$\innerproduct{\whittakerVector{\tau}}{\whittakerVector{\tau}} = \sum_{x \in \opposite{B} \backslash G} \whittakerVector{\tau}\left(x\right) \conjugate{\whittakerVector{\tau}\left(x\right)} = \sum_{u \in U} \fieldCharacter\left(\phi\left(u\right)\right) \conjugate{\fieldCharacter\left(\phi\left(u\right)\right)} = \sizeof{U},$$
and that
$$\innerproduct{\tau\left(g\right) \whittakerVector{\tau}}{\whittakerVector{\tau}} = \sum_{x \in \opposite{B} \backslash G} \whittakerVector{\tau}\left(xg\right) \conjugate{\whittakerVector{\tau}\left(x\right)} = \sum_{u \in U} \whittakerVector{\tau}\left(u g\right) \conjugate{\fieldCharacter\left(\phi\left(u\right)\right)}.$$
Hence $$\besselFunction_{\tau, \fieldCharacter}\left(g\right) = \frac{1}{\sizeof{U}} \sum_{u \in U} v_{\tau, \fieldCharacter}\left(ug\right) \fieldCharacter^{-1}\left(\phi \left(u\right)\right).$$

	We are interested in all the $u \in U$ such that $u z \specialWeylElement{P} \in \opposite{B} U$. This is the same as all the pairs $u_1, u_2 \in U$ such that $u_1 z  \specialWeylElement{P} u_2 \in \opposite{B}$, i.e., all the $u_1, u_2 \in U$ such that $u_1 z  \specialWeylElement{P} u_2 \in X$. 
    Suppose that $x \coloneqq u_1 z  \specialWeylElement{P} u_2 \in X$. Then we have $$x = u_1 z  \specialWeylElement{P} u_2 = t u'$$ for some $t \in T$ and $u' \in \opposite{U}$. By definition, $t = \gamma\left(x\right)$ and $$v_{\tau, \fieldCharacter}\left(u_1 z \specialWeylElement{P}\right) \fieldCharacter^{-1}\left(\phi\left(u_1\right)\right) = v_{\tau, \fieldCharacter}\left(t u' u_2^{-1}\right) \fieldCharacter^{-1}\left(\phi\left(u_1\right)\right) = \chi\left(\gamma\left(x\right)\right) \fieldCharacter^{-1}\left(f\left(x\right)\right).$$
	Hence, $$\besselFunction_{\tau, \fieldCharacter}\left(z \specialWeylElement{P}\right) = \frac{1}{\sizeof{U}} \sum_{u_1, u_2 \in U} \sum_{\substack{x \in X\\
	u_1 z \specialWeylElement{P} u_2 = x}} \chi\left(\gamma\left(x\right)\right) \fieldCharacter^{-1}\left(f\left(x\right)\right).$$

	Notice that $u_1, u_2 \in U$ and $u'_1, u'_2 \in U$ are such that $$u_1 z \specialWeylElement{P} u_2 = u'_1 z \specialWeylElement{P} u'_2$$ if and only if $$\specialWeylElement{P}^{-1} z^{-1} \left(u'_1\right)^{-1} u_1 z \specialWeylElement{P} = u'_2 u_2^{-1}.$$ Hence every element $x \in X$ with $\pi\left(x\right) = z$ appears in the sum $\sizeof{\left(\specialWeylElement{P}^{-1} U \specialWeylElement{P}\right) \cap U}$ times, so Lemma \ref{lem-quot} completes the proof.
\end{proof}

\section{Weighted character sheaves} \label{sec-WC}
Fix a prime $\ell \neq p$ and an isomorphism $\mathbb{C} \cong \Ql$, and let $\sqrt{q} \in  \Ql$ be the image of the positive square root. Then associated to the homomorphisms chosen at the start of Section \ref{sec:generic-representations-and-bessel-functions}, we have the following $\ell$-adic local systems.

\begin{itemize}
\item Let $\A$ be the Artin--Schreier sheaf on $\mathbb{A}^1$ associated to $\psi^{-1} \colon \mathbb{F} = \mathbb{A}^1(\mathbb{F}) \rightarrow \Ql^\times$.
\item Let $\K_\chi$ be the multiplicative Kummer sheaf on $T$ associated to $\chi \colon T(\mathbb{F}) \rightarrow \Ql^\times$.
\item Let  $\K_{\chi}^{Z(G)}$ be the restriction of $\K_\chi$ to $Z(G)$.
\end{itemize}

Both  $\A$ and $\K_\chi$ are examples of character sheaves. For a commutative algebraic group $H$ over $\mathbb{F}$ with a homomorphism $\alpha \colon H(\mathbb{F}) \rightarrow \Ql^\times$, the associated character sheaf $\mathcal{L}_\alpha$ is a local system on $H$ such that $\text{Tr}(\text{Frob}_h, (\mathcal{L}_\alpha)_{\overline{h}}) = \alpha(h)$ for all $h \in H(\mathbb{F})$. Here $\text{Frob}_h$ is the geometric Frobenius at $h$ and $(\mathcal{L}_\alpha)_{\overline{h}}$ is a geometric stalk. Additionally, if $m \colon H \times H \rightarrow H$ is the group map we have $m^* \mathcal{L}_\alpha \cong \mathcal{L}_\alpha \boxtimes \mathcal{L}_\alpha$. In general, for a scheme $Z$ of finite type over $\mathbb{F}$ and an object $\mathcal{A} \in D_c^b(Z, \Ql)$, Grothendieck's function-sheaf dictionary \cite{functionsheaf} associates the function $\text{Tr}_{\mathcal{A}} \colon Z(\mathbb{F}) \rightarrow \Ql$ where $\text{Tr}_{\mathcal{A}}(z) = \sum_n (-1)^n \text{Tr}(\text{Frob}_z, \mathcal{H}^n(\mathcal{A})_{\overline{z}})$.

\begin{definition} \label{defWC} Let $d = \dim G/P$. With respect to the above data and the maps defined in Definition \ref{def-Xmaps}, the \emph{weighted character sheaf} of $X$ is
    $$\WC_G^P(\phi, \chi) : = R\pi_!(\gamma^* \K_\chi \otimes f^*\A)(\tfrac{d}{2}) [d] \in D^b_{c}(Z(L), \Ql).$$
\end{definition}

Despite the name, it is not yet clear that $\WC_G^P(\phi, \chi)$ is supported in cohomological degree zero. The Tate twist $\Ql(\tfrac{d}{2})$ is chosen so that $\WC_G^P(\phi, \chi)$ is pure of weight zero in the adjoint case.
Next, we show that taking the trace of Frobenius on the weighted character sheaf recovers special values of Bessel functions.

\begin{lemma} \label{Trace-lem}
    For any $z \in Z(L)(\mathbb{F})$, we have
    $$\textnormal{Tr}_{{\WC_G^P}(\phi, \chi)}(z) =  (-1)^{d} q^{\tfrac{d}{2}}\besselFunction_{\tau, \fieldCharacter}\left(z \specialWeylElement{P}\right).$$
\end{lemma}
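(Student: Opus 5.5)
The plan is to compute the trace directly with Grothendieck's function-sheaf dictionary and then compare the result with Theorem \ref{thmbessel}. There are three steps: identify the trace of the shift and Tate twist, apply the Grothendieck--Lefschetz trace formula to $R\pi_!$, and evaluate the pointwise traces of the two pulled-back character sheaves.

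\textbf{Shift and twist.} Write $\mathcal{F} = \gamma^*\K_\chi \otimes f^*\A$. The shift $[d]$ multiplies the alternating trace by $(-1)^d$. The Tate twist $(\tfrac{d}{2})$ multiplies the trace of geometric Frobenius by $q^{-d/2}$ times $\ldots$; more precisely, $\Ql(1)$ has geometric Frobenius trace $q^{-1}$. Using the chosen $\sqrt{q}$, the half twist $\Ql(\tfrac{d}{2})$ has trace $q^{-d/2}$. Hence
$$\textnormal{Tr}_{\WC_G^P(\phi,\chi)}(z) = (-1)^d q^{-d/2}\, \textnormal{Tr}_{R\pi_!\mathcal{F}}(z).$$

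\textbf{Trace formula.} By the Grothendieck--Lefschetz trace formula, compatible with proper base change for $R\pi_!$,
$$\textnormal{Tr}_{R\pi_!\mathcal{F}}(z) = \sum_{x \in X(\mathbb{F}),\ \pi(x)=z} \textnormal{Tr}_{\mathcal{F}}(x).$$

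\textbf{Pointwise traces.} Traces are multiplicative under tensor product and compatible with pullback, so
$$\textnormal{Tr}_{\mathcal{F}}(x) = \textnormal{Tr}_{\K_\chi}(\gamma(x))\,\textnormal{Tr}_{\A}(f(x)) = \chi(\gamma(x))\,\psi^{-1}(f(x)).$$
This uses the defining property of character sheaves stated above, and that $\A$ is the character sheaf attached to $\psi^{-1}$.

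\textbf{Comparison.} Inserting Theorem \ref{thmbessel}, the sum over the fibre equals $q^{d}\besselFunction_{\tau,\psi}(z\dot{w}_P)$. Therefore
$$\textnormal{Tr}_{\WC_G^P(\phi,\chi)}(z) = (-1)^d q^{-d/2} q^d \besselFunction_{\tau,\psi}(z\dot{w}_P) = (-1)^d q^{d/2}\besselFunction_{\tau,\psi}(z\dot{w}_P).$$

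The main point requiring care is the sign and normalization conventions. The Artin--Schreier and Kummer sheaves must be normalized so that the trace of geometric Frobenius gives $\psi^{-1}$ and $\chi$ rather than their inverses. The half Tate twist must be interpreted via $\sqrt{q}$ so that it contributes $q^{-d/2}$. Once the convention $\textnormal{Tr}(\textnormal{Frob}, \Ql(1)) = q^{-1}$ for geometric Frobenius is fixed, everything else is formal.
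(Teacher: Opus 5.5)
Your proposal is correct and follows the same route as the paper. Both apply the function-sheaf dictionary and then invoke Theorem \ref{thmbessel}: fibre sums for $R\pi_!$, multiplicativity under $\otimes$ and pullback, a factor $(-1)^d$ from the shift, and $q^{-d/2}$ from the half Tate twist. You make the sign from $[d]$ explicit, which the paper leaves implicit.
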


\begin{proof}
   For a morphism of $\mathbb{F}$-schemes $g \colon Z \rightarrow Z'$, the derived sheaf operations $\otimes^{L}$, $g^*$, and $Rg_!$ correspond under Grothendieck's function-sheaf dictionary to pointwise multiplication, pullback, and summing over fibers. Moreover, the geometric Frobenius acts on the stalk of the Tate twist $\Ql(\tfrac{n}{2})$ by $q^{-\tfrac{n}{2}}$ for any $n \in \mathbb{Z}$. Thus, the result follows from Theorem \ref{thmbessel}.
\end{proof}

When $Z(G)$ is a torus, we now isolate a factor of $\WC_G^P(\phi, \chi)$ supported on $Z(L_{\ad})$.

\begin{lemma} \label{lem-WCad}
    Suppose that $Z(G)$ is a torus, and that we have chosen an isomorphism as in \eqref{eq-split}. If $\gamma_{\ad}^T \colon X_{\ad} \rightarrow T$ is the induced lift as in Corollary \ref{cor-weightSplit}, then with respect to the induced decomposition $Z(L) \cong Z(G) \times Z(L_{\ad})$ there is an isomorphism
     $$\WC_G^P(\phi, \chi)  = \K_{\chi}^{Z(G)} \boxtimes R\pi_{\ad, !} ((\gamma_{\ad}^T)^*\K_{\chi} \otimes f_{\ad}^* \A) (\tfrac{d}{2}) [d].$$
\end{lemma}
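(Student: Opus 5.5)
The plan is to use the product decomposition $X \cong Z(G) \times X_{\ad}$ induced by \eqref{eq-split}, check that under it each of the three maps $\pi$, $\gamma$, $f$ splits compatibly, and then apply the Künneth formula for $R\pi_!$.

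\textbf{Step 1: identify the maps on $Z(G) \times X_{\ad}$.} Write a point of $X$ as $(z_0, y)$ with $z_0 \in Z(G)$ and $y \in X_{\ad}$. By construction, $X \cong Z(L) \times B_-^{w_P}$ and $Z(L) \cong Z(G) \times Z(L_{\ad})$, so this point corresponds to $z_0\, s(y)$, where $s \colon X_{\ad} \to X$ is the section determined by \eqref{eq-split}. Since $z_0$ is central, $z_0 s(y) = u_1 (z_0 t) \dot{w}_P u_2$ whenever $s(y) = u_1 t \dot{w}_P u_2$. Hence:
\begin{itemize}
\item[(a)] $\pi(z_0, y) = z_0 \cdot \pi(s(y))$, which under $Z(L) \cong Z(G) \times Z(L_{\ad})$ is $(z_0, \pi_{\ad}(y))$. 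So $\pi = \mathrm{id}_{Z(G)} \times \pi_{\ad}$.
\item[(b)] $f(z_0, y) = f_{\ad}(y)$, since the decoration factors through $X \to X_{\ad}$.
\item[(c)] $\gamma(z_0, y) = z_0 \cdot \gamma(s(y))$, because $z_0$ is central and lies in $T$.
\end{itemize}

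\textbf{Step 2 (the main check): $\gamma \circ s = \gamma_{\ad}^T$.} Write $s(y) = t\, b$ with $t \in Z(L)$ the lift of the $Z(L_{\ad})$-component of $y$ and $b \in B_-^{w_P}$. Since $t \in T$, we get $\gamma(tb) = t\,\gamma(b)$. This is exactly how $\gamma_{\ad}^T$ was built in Corollary \ref{cor-weightSplit}: the factor $t$ is the image of $Z(L_{\ad}) \to T_{\ad} \to T$ under the chosen section, and $\gamma(b)$ comes from Lemma \ref{Bad}.

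The one point to verify is that the section $Z(L_{\ad}) \to Z(L)$ used in Corollary \ref{cor:splitting-of-center-of-levi-part} agrees with the restriction of the section $T_{\ad} \to T$ coming from \eqref{eq-split}. This holds because the splitting $Z(L) \to Z(G)$ is the restriction of $T \to Z(G)$. Consequently the complementary section lands in $\ker(T \to Z(G)) = T_{\ad}$, and the two decompositions are compatible. Combining with Step 1(c),
$$\gamma = m \circ (\iota \times \gamma_{\ad}^T),$$
where $\iota \colon Z(G) \to T$ is the inclusion and $m$ is multiplication on $T$.

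\textbf{Step 3: pull back and push forward.} By the character sheaf property $m^*\K_\chi \cong \K_\chi \boxtimes \K_\chi$,
$$\gamma^*\K_\chi \cong \K_\chi^{Z(G)} \boxtimes (\gamma_{\ad}^T)^*\K_\chi, \qquad f^*\A \cong \Ql \boxtimes f_{\ad}^*\A.$$
Tensoring these gives $\K_\chi^{Z(G)} \boxtimes \bigl((\gamma_{\ad}^T)^*\K_\chi \otimes f_{\ad}^*\A\bigr)$.

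Since $\pi = \mathrm{id} \times \pi_{\ad}$, the Künneth formula for compactly supported pushforward gives
$$R\pi_!\bigl(\K_\chi^{Z(G)} \boxtimes \mathcal{B}\bigr) \cong \K_\chi^{Z(G)} \boxtimes R\pi_{\ad,!}\,\mathcal{B}, \qquad \mathcal{B} = (\gamma_{\ad}^T)^*\K_\chi \otimes f_{\ad}^*\A.$$
Applying the twist $(\tfrac{d}{2})[d]$ to both sides yields the stated isomorphism.
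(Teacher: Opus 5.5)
Your proposal is correct and follows the same route as the paper. You use the decomposition $X \cong Z(G)\times X_{\ad}$, show $\gamma^*\K_\chi \cong \K_\chi^{Z(G)}\boxtimes(\gamma_{\ad}^T)^*\K_\chi$ and $f^*\A \cong \Ql\boxtimes f_{\ad}^*\A$, and apply the K\"unneth formula along $\pi = \mathrm{id}_{Z(G)}\times\pi_{\ad}$. Your Step 2 also spells out a compatibility the paper leaves implicit: the section $Z(L_{\ad})\to Z(L)$ agrees with the restriction of $T_{\ad}\to T$, which is what gives $\gamma = m\circ(\iota\times\gamma_{\ad}^T)$.
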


\begin{proof} We refer to \eqref{Xmaps} for a reminder on the maps used in the proof.
    It follows from Lemma \ref{Bad}  that $X_{\ad} = Z(L_{\ad}) \times B_-^{w_P}$, so that with respect to the decomposition $X = Z(G) \times X_{\ad}$ we have
    $$\gamma^* \K_{\chi} = \K_{\chi}^{Z(G)} \boxtimes (\gamma_{\ad}^T)^* \K_{\chi}.$$ Additionally, $f^* \A = \Ql \boxtimes f^*_{\ad} \A.$ The result then follows by applying the Kunn\"eth formula for $!$-pushforward along $\pi = \text{id}_{Z(G)} \times \pi_{\ad}$.
\end{proof}

By Lemma \ref{lem-WCad}, if $Z(G)$ is a torus we are reduced to studying the adjoint crystal $X_{\ad}$ with its highest weight map $\pi_{\ad}$, decoration $f_{\ad}$, and the more general weight map $\gamma_{\ad}^T \colon X_{\ad} \rightarrow T$. If $Z(G)$ is (possibly) not a torus, we have the following result.

\begin{lemma} \label{lem-WC'}
    Suppose that we have chosen an embedding $(G, T) \rightarrow (G', T')$ and a splitting $s \colon T' \rightarrow T$ as in Lemma \ref{lemm-G'}. Let $\chi' \colon T'(\mathbb{F}) \rightarrow \Ql^\times$ be the corresponding extension of $\chi$ to $T'(\mathbb{F}) = \ker(s)(\mathbb{F}) \times T(\mathbb{F})$ which is trivial on the first factor. Then, with respect to the embedding $Z(L) \rightarrow Z(L')$, there is an isomorphism
    $$\WC_G^P(\phi, \chi) \cong \restr{\WC_{G'}^{P'}(\phi, \chi')}{Z(L)}.$$
\end{lemma}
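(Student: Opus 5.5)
The plan is to realize $X$ as the preimage of $Z(L)$ under the highest weight map $\pi' \colon X' \rightarrow Z(L')$. Then proper base change for $R\pi'_!$ along $Z(L) \hookrightarrow Z(L')$ identifies the restriction with the pushforward over $X$. After that it only remains to check that the three maps and the two sheaves being pulled back agree.

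\textbf{Step 1 (geometry).} By \eqref{XtoX'}, $X = Z(L) \times B_-^{w_P}$ and $X' = Z(L') \times (B')_-^{w_{P'}}$. Here $B_-^{w_P} \rightarrow (B')_-^{w_{P'}}$ is an isomorphism, because both map isomorphically to $(B_{\ad})_-^{w_P}$ by Lemma \ref{Bad}, and $G_\ad \cong G'_\ad$ by Lemma \ref{lemm-G'}. Moreover, $\pi$ and $\pi'$ are the projections to the first factor. So the square formed by $X \rightarrow X'$, $\pi$, $\pi'$ and $Z(L) \rightarrow Z(L')$ is Cartesian. We also need the sets $I$ and $P'$ and the element $\dot{w}_P$ to be compatible. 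This holds because the choices of root trivializations and $\dot{w}$ come from $G_{\mathrm{sc}}$ (Section \ref{sec:Crystal}), so the embedding $G \rightarrow G'$ carries $U$, $U_-$, $\dot{w}_P$ to the corresponding objects of $G'$. We also check $Z(L) = Z(L') \cap G$, or at least that the map $Z(L) \to Z(L')$ is a closed immersion compatible with the factorization. This is the one point that needs care. It follows since $L' = (S \times L)/N$, so $Z(L') = (S \times Z(L))/N$, which contains $Z(L)$ as a closed subgroup.

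\textbf{Step 2 (compatibility of data).} The decorations agree, $f'|_X = f$, since $\phi$ on $U' = U$ is defined by the same trivializations. The weight maps satisfy $\gamma'|_X = \iota \circ \gamma$, where $\iota \colon T \rightarrow T'$ is the inclusion, since $B_- \subset B'_-$ and $U_- = U'_-$. Finally, $\iota^* \K_{\chi'} \cong \K_\chi$. The sheaf $\K_{\chi'}$ is the character sheaf of $\chi' = \chi \circ s$, so it equals $s^*\K_\chi$, and $s \circ \iota = \mathrm{id}_T$. Since $\dim G/P = \dim G'/P'$ (the unipotent radicals coincide), the twists and shifts $(\tfrac d2)[d]$ agree.

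\textbf{Step 3 (conclusion).} By proper base change and the compatibilities above,
$$\restr{\WC_{G'}^{P'}(\phi,\chi')}{Z(L)} \cong R\pi_!\bigl((\gamma'^*\K_{\chi'} \otimes f'^*\A)|_X\bigr)(\tfrac d2)[d] \cong R\pi_!(\gamma^*\K_\chi \otimes f^*\A)(\tfrac d2)[d],$$
which is $\WC_G^P(\phi,\chi)$.

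I expect the main obstacle to be the bookkeeping in Step 1, namely that the square is genuinely Cartesian. The rest is formal.
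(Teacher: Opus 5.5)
Your proposal is correct and follows the same route as the paper. The paper's proof simply calls this a straightforward diagram chase using the embedding of crystals \eqref{XtoX'} and the fact that the decoration factors through $X'$; your Cartesian square with proper base change, together with $\gamma'|_X = \iota\circ\gamma$ and $\iota^*\K_{\chi'} \cong \K_\chi$, is exactly that chase made explicit (the closed-immersion check in Step 1 is unnecessary, since base change for $R\pi'_!$ holds along any morphism once the square is Cartesian, which already follows from the product decompositions).
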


\begin{proof}
    Similarly to Lemma \ref{lem-WCad}, this is a straightforward diagram chase, using the embedding of crystals $X \rightarrow X'$ as in \eqref{XtoX'} and that the decoration $f \colon X \rightarrow \mathbb{A}^1$ factors through $X'$.
\end{proof}

\section{Kloosterman sheaves} \label{sec:K}

\subsection{Affine Weyl groups} Let $W_{\ext} :=X_*(T) \rtimes W$ be the Iwahori--Weyl group of $G$. For $\lambda \in X_*(T)$ we denote the corresponding translation element in $W_{\ext}$ by $\tau^{\lambda}$. We will match the sign convention in \cite{LamTemplier2024, dCHL}, so that $\tau^{\lambda}$ acts on $X_*(T) \otimes \mathbb{R}$ by $x \mapsto x + \lambda$.\footnote{We warn the reader that the opposite sign choice $x \mapsto x - \lambda$ is also common in the literature.} The resulting action of $W_{\ext}$ on affine roots is then
$$\tau^{\lambda} w (\alpha + n) = w\alpha - \langle w\alpha, \lambda \rangle + n,$$
where $\alpha$ is a root and $n \in \mathbb{Z}$. This agrees with \cite[(6.27.1)]{LamTemplier2024}, taking into account the fact that $\tau^{\lambda} w = w 
\tau^{w^{-1}(\lambda)}$. 

Using $\tau$ to also denote a formal variable, the translation element $\tau^{\lambda}$ admits a canonical lift to $G(\mathbb{F}(\!(\tau)\!))$, which we also denote by $\tau^{\lambda}$. Explicitly, for $\tau^\lambda w \in W_{\ext}$ we have the lift $\tau^{\lambda} \dot{w} = \lambda(\tau^{-1}) \dot{w} \in G(\mathbb{F}(\!(\tau)\!))$,
where $\tau^{-1}$ is used so that under our sign convention, $\tau^{\lambda} \dot{w} U_{\alpha +n} (\tau^{\lambda} \dot{w})^{-1} = U_{\tau^{\lambda} \dot{w}(\alpha +n)}$ (see \cite[\S 3.2]{dCHL}).

\subsection{Affine Grassmannians} Let $\Gr = G(\mathbb{F}(\!(\tau)\!))/ G(\mathbb{F}[\![\tau]\!])$ be the affine Grassmannian of $G$. This is represented by an ind-projective scheme over $\mathbb{F}$ (see e.g.~\cite{ZhuAffine} for more details). Identifying $G(\mathbb{F}[\![\tau]\!])$ with its associated positive loop group (an affine group scheme over $\mathbb{F}$), there is a left translation action of $G(\mathbb{F}[\![\tau]\!])$ on $\Gr$. 

By the Cartan decomposition, the left $G(\mathbb{F}[\![\tau]\!])$-orbits in $\Gr$ are represented by the points $\tau^{-\mu}$ for $\mu \in X_*(T)^+$. The orbit $G(\mathbb{F}[\![\tau]\!]) \cdot \tau^{-\mu} \cdot \Gr$ is denoted $\Gr_{\mu}$, and it is a smooth, quasi-projective $\mathbb{F}$-scheme. We emphasize that by our sign choice above, $\Gr_{\mu}$ is the $G(\mathbb{F}[\![\tau]\!])$-orbit through $\mu(\tau)$. If $\mu$ is minuscule, then $\Gr_{\mu}$ agrees with the $G$-orbit of $\tau^{-\mu}$, and the stabilizer in $G$ is the parabolic subgroup $P_{\mu}$ generated by $T$ and the root subgroups $U_\alpha$ such that $\langle \alpha, \mu \rangle \leq 0$. Thus, if $\mu$ is minuscule then $\Gr_\mu \cong G/P_\mu$.

The closure relations among the $\Gr_{\mu}$ are equivalent to the partial order $\leq$ in Section \ref{sec:Crystal}, so that in particular, if $\mu$ is minuscule then $\Gr_{\mu}$ is closed. For any $\mu \in X_*(T)^+$, the orbit $\Gr_{\mu}$ contains all the points $\tau^{-w \mu}$ for $w \in W$. Additionally, the connected components of $\Gr$ are indexed by the discrete abelian group $\Omega := X_*(T) / \mathbb{Z} \Delta^\vee = \pi_1(G).$ Here we choose this bijection so that $\Gr_\mu$ lies in the connected component indexed by the class of $\mu$ in $\Omega$.

Let $G[\tau^{-1}]_1$ be the kernel of $G(\mathbb{F}[\tau^{-1}]) \rightarrow G(\mathbb{F}), \: \tau^{-1} \mapsto 0$. Taking the orbit of the basepoint $\tau^0 \in \Gr$ under $G[\tau^{-1}]_1$ gives an open embedding into the neutral connected component of $\Gr$. We take the reduced ind-scheme structure on $G[\tau^{-1}]_1$ and identify it with a subscheme of $\Gr$, often called the big open cell.

The quotient $G \rightarrow G_{\ad}$ induces a map $\Gr \rightarrow \Gr_{\ad}$ to the affine Grassmannian for $G_{\ad}$. At the level of connected components, this is given by the natural map $\Omega \rightarrow \Omega_{\ad}$, which is neither surjective nor injective in general, but is surjective if $Z(G)$ is a torus. Each connected component of $\Gr$ maps via a universal homeomorphism onto its image $\Gr_{\ad}$, in a way which isomorphically identifies $G(\mathbb{F}[\![\tau]\!])$-orbits with $G_{\ad}(\mathbb{F}[\![\tau]\!])$-orbits.

\subsection{Bruhat--Tits groups}
Let $t$ be the standard coordinate function on $\mathbb{P}^1$, so that the complete local ring at $0$ is identified with $\mathbb{F}[\![t]\!]$ and the complete local ring at $\infty$ is identified with $\mathbb{F}[\![t^{-1}]\!]$. We identify $\mathbb{G}_m$ with $\mathbb{P}^1 \setminus \{0, \infty\}$.
Let
\begin{align*} I(0) &= \{g \in G(\mathbb{F}[\![t^{-1}]\!]) \: \mid \: g(0) \in B(\mathbb{F})\}, \quad \text{the Iwahori subgroup at } \infty, \\
I(1) &= \{g \in G(\mathbb{F}[\![t^{-1}]\!]) \: \mid \: g(0) \in U(\mathbb{F})\}, \quad \text{the unipotent radical of } I(0).
\end{align*}
We also have the analogous groups $I^{\opp}(0)$ and $I^{\opp}(1)$ constructed using the opposite groups $B_-$ and $U_-$ at $0$ instead of $\infty$. We identify all of these groups with their corresponding affine loop groups over $\mathbb{F}$.

Let $I(2) \subset I(1)$ be the subgroup as in \cite[\S 4.1.1]{XZ22} such that
$$I(1) / I(2) = \bigoplus_{\alpha \text{ affine simple}} U_\alpha \cong \bigoplus_{\alpha \text{ affine simple}} \mathbb{G}_a.$$ This is a slight departure from \cite[\S 1.2]{HNY} in the case when $G$ has nontrivial center, but when $G=\GL_n$ it is consistent with their own modification in \cite[\S 3]{HNY}. 
Here the trivializations of the $U_\alpha$ for $\alpha \in \Delta$ were chosen in Section \ref{sec:Crystal}. Letting $\theta$ denote the highest root, the unique affine simple root not belonging to $\Delta$ is trivialized by $u_{-\theta}(x t^{-1})$, $x \in \mathbb{G}_a$. By our choice of $I(2)$, the map $G \rightarrow G_{\ad}$ induces an isomorphism $I(1)/I(2) \cong I_{\ad}(1)/I_{\ad}(2)$.

Let $\mathcal{G} := \mathcal{G}(1,2)$ be the Bruhat--Tits group scheme over $\mathbb{P}^1$ as in \cite[\S 4.1.1]{XZ22} such that
$$\restr{\mathcal{G}}{\mathbb{G}_m} = G \times \mathbb{G}_m, \quad \mathcal{G}(\mathbb{F}[\![t^{-1}]\!]) = I(2), \quad \mathcal{G}(\mathbb{F}[\![t]\!]) = I^{\opp}(1).$$ Here $\mathcal{G}$ is constructed as a dilatation of $\mathcal{G}(1,1)$ along a subgroup of the fiber at $\infty$, where $\mathcal{G}(1,1)$ is the dilatation of $G \times \mathbb{P}^1$ along $U_- \times \{0\}$ and $U \times \{\infty\}$. By basic properties of dilatations, e.g.~\cite[\S 2.4, \S 2.5]{MRR23}, the map $G \times \mathbb{P}^1  \rightarrow G_{\ad} \times \mathbb{P}^1$ extends to a homomorphism $\mathcal{G} \rightarrow \mathcal{G}_{\ad}$.

\subsection{Moduli of bundles}
Let $\Bun$ be the moduli stack of $\mathcal{G}$-bundles on $\mathbb{P}^1$. We now review some of the geometry of $\Bun$; see \cite[\S 1.4]{HNY} and \cite[\S 7]{LamTemplier2024} for more details. As with $\Gr$, the connected components of $\Bun$ are indexed by $\Omega$.
For $\lambda \in \Omega$, let $\Bun^{\lambda}$ be the corresponding connected component. By \cite[Corollary 1.2]{HNY}, there is a canonical isomorphism $\Bun^0 \cong \Bun^\lambda$.
Let $\mathcal{E}_0 \in \Bun^0(\mathbb{F})$ correspond to the trivial bundle, and for $\lambda \in \Omega$, let $ \mathcal{E}_{\lambda} \in \Bun^{\lambda}(\mathbb{F})$ be the image of $\mathcal{E}_0$ under the isomorphism $\Bun^0 \cong \Bun^\lambda$.

The group $T = I^{\opp}(0)/I(1)^{\opp}$ acts on $\Bun$ by modifying $\mathcal{G}$-bundles at $0$, and the group $I(1)/I(2)$ acts on $\Bun$ by modifying $\mathcal{G}$-bundles at $\infty$. Taking the orbit of $\mathcal{E}_0$ under $T \times I(1)/I(2)$ gives an affine open immersion of the open cell $T \times I(1)/I(2)$ into $\Bun^0$.

\begin{assumption} \label{Assump1}
    For the rest of the paper unless otherwise stated, we assume that $Z(G)$ is a torus and that we have chosen an isomorphism $T \cong Z(G) \times T_{\ad}$ as in \eqref{eq-split}. We also fix a minuscule parabolic $P \subset G$, associated to a simple root $\alpha_i \in \Delta = \Delta_{\ad}$ with minuscule fundamental coweight $\omega_i^\vee \in X_*(T_{\ad})$. Additionally, by \cite[Lemma 6.30(c)]{LamTemplier2024} we have $w_P^{-1} \alpha_i = -\theta$, the negative of the highest root,  and we choose the sign in $u_{-\theta} \colon \mathbb{G}_a \rightarrow U_{-\theta}$ so that $\dot{w}_P^{-1}u_{\alpha_i}(x) \dot{w}_P = u_{-\theta}(-x)$ for all $x \in \mathbb{G}_a$, in accordance with \cite[Eqn.~(7.8.2)]{LamTemplier2024}.
\end{assumption}

From the isomorphism $T \cong Z(G) \times T_{\ad}$ we get a natural lift of $\omega_i^\vee$ to $\tilde{\omega}_i^\vee \in X_*(T)$. By Corollary \ref{cor:splitting-of-center-of-levi-part} we have an isomorphism $Z(L) \cong Z(G) \times Z(L_{\ad})$, and we note that the resulting inclusion $Z(L_{\ad}) \rightarrow T$ agrees with the composition $\tilde{\omega}_i^\vee \circ \restr{\alpha_i}{Z(L_{\ad})}$.
Let $$\dot{\kappa} := \tau^{-\tilde{\omega}_i^\vee} \dot{w}_P = \tilde{\omega}_i^\vee(\tau) \dot{w}_P \in G(\mathbb{F}[\tau^{\pm 1}]).$$
If $\kappa \in \Omega$ is the element represented by $\tilde{\omega}_i^\vee$, then $\mathcal{E}_\kappa \in \Bun^{\kappa}(\mathbb{F})$ is obtained from gluing the trivial bundle on the formal disc $\mathrm{Spec} \: \mathbb{F}[\![t^{-1}]\!]$ at $\infty$ along the transition function $\dot{\kappa}$ over the punctured disc $\mathrm{Spec} \: \mathbb{F}(\!(t^{-1})\!)$, where we identify the local parameters $\tau^{-1}$ and $t^{-1}$. 

\subsection{Hecke stacks} In this subsection we follow \cite[\S 5]{HNY} and \cite[\S 7]{LamTemplier2024}, with a small change to the latter.
Consider the Hecke stack $\Hk$ which sends an affine $\mathbb{F}$-scheme $S$ to the groupoid
$$\Hk(S) := \left\{ (\mathcal{E}, a, \varphi) \: \mid \:  \mathcal{E} \in \Bun(S), \: a \colon S \rightarrow \mathbb{G}_m, \: \varphi \colon \restr{\mathcal{E}}{(\mathbb{P}^1 \setminus \{a\}) \times S} \cong \restr{\mathcal{E}_\kappa}{(\mathbb{P}^1 \setminus \{a\}) \times S} \right\}.$$
There are projection maps $$\pr_1 \colon \Hk \rightarrow \Bun, \: \pr_1(\mathcal{E}, a, \varphi) = \mathcal{E}, \quad \pr_2 \colon \Hk \rightarrow \mathbb{G}_m, \: \pr_2(\mathcal{E}, a, \varphi) = a.$$

Let $\Hk^\circ$ be the inverse image of the open cell $T \times I(1)/I(2) \subset \Bun^0$ under $\pr_1$. Denote by $U/[U,U]$ the quotient of $U$ by the subgroup generated by the non-simple positive roots. Then there is a canonical isomorphism $I(1)/I(2) \cong U/[U,U] \times U_{-\theta}$. The restriction of $\pr_1$ to $\Hk^\circ$ then factors as a product
$$(f_T, f_+, f_0) \colon  \Hk^\circ \rightarrow T \times U/[U,U] \times U_{-\theta}.$$

For any point $a \in \mathbb{G}_m$ (over any affine base $S$), we use the local parameter $\tau = 1 - t/a$ at $a$. Then as shown in \cite[Eqn.~(7.3.4)]{LamTemplier2024}, $\dot{\kappa}$ can be viewed as an isomorphism  $\restr{\mathcal{E}_0}{\mathbb{P}^1 \setminus \{a\}} \cong \restr{\mathcal{E}_\kappa}{\mathbb{P}^1 \setminus \{a\}}$. This isomorphism is given by $\dot{\kappa} = \tilde{\omega}_i^\vee(\tau) \dot{w}_P$ in the local chart on $\mathbb{A}^1 \setminus \{a\}$, and in the local chart on the disc at $\infty$ by %starts in $\tau$ coordinates, but must end in $t$ coordinates,
\begin{equation} \label{inf-iso} (\tilde{\omega}_i^\vee(t) \dot{w}_P)^{-1}(\tilde{\omega}_i^\vee(\tau) \dot{w}_P) = \dot{w}_P^{-1} \tilde{\omega}_i^\vee(\tau/t) \dot{w}_P.\end{equation}
Using that $\tau^{-1} = -at^{-1} +O((t^{-1})^2)$ as in \cite[Eqn.~(7.3.3)]{LamTemplier2024}, it follows that the value of \eqref{inf-iso} at $t=\infty$ is given by
\begin{equation} \label{ca} c_a :=  \dot{w}_P^{-1} \tilde{\omega}_i^\vee(-a^{-1}) \dot{w}_P \in T.\end{equation}

Any point in $\Hk^\circ$ lying over $a \in \mathbb{G}_m$ can be obtained by precomposing $\dot{\kappa}$ with some element $g(\tau^{-1}) \in \mathrm{Aut}(\restr{\mathcal{E}_0}{\mathbb{P}^1 \setminus \{a\}}) =G[\tau^{-1}]$.
The level structure associated to $\dot{\kappa} g(\tau^{-1})$ is defined by evaluation of $(\dot{\kappa} g(\tau^{-1}))^{-1}$ at $t=0$ and at $t=\infty$, but contrary to what is claimed in \cite[\S 7]{LamTemplier2024}, the isomorphism \cite[Eqn.~(7.3.4)]{LamTemplier2024} does change the level structure at $\infty$, by $c_a^{-1}$. Therefore, the resulting level structure is not $(g(1)^{-1} \dot{w}_P^{-1}, g(0)^{-1})$, but rather
\begin{equation} \label{level-str} (g(1)^{-1} \dot{w}_P^{-1}, g(0)^{-1}c_a^{-1}) \in B_- \times U. \end{equation}

As in \cite[\S 7]{LamTemplier2024}, one rigidifies the moduli problem defining $\Hk^\circ$ by requiring that the modification $\dot{\kappa}^{-1} \circ \varphi$ of $\mathcal{E}_0$ is trivial at $\infty$, thereby obtaining an element of $G[\tau^{-1}]_1$. Let $h(\tau^{-1}) = c_a g(\tau^{-1}) g(0)^{-1} c_a^{-1} \in G[\tau^{-1}]_1$. Letting $a$ vary and taking into account the level structures at $0$ and $\infty$ in \eqref{level-str} leads to the following.

\begin{proposition} \label{Hkcirc}
    There is an isomorphism
    $$\Hk^\circ \cong \{ h(\tau^{-1})  \in G[\tau^{-1}]_1 \: \mid \: h(1) \in \dot{w}_P^{-1} B_- U\} \times \mathbb{G}_m$$ characterized by the property that for $a \in \mathbb{G}_m$, the image of $h(\tau^{-1})$ in the fiber of $\pr_2$ over $a$ is $\dot{\kappa} h(\tau^{-1}) \colon \restr{\mathcal{E}_0}{\mathbb{P}^1 \setminus \{a\}} \cong \restr{\mathcal{E}_\kappa}{\mathbb{P}^1 \setminus \{a\}}$.
\end{proposition}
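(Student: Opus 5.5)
We prove the statement one fiber of $\pr_2$ at a time, then let $a$ vary. Fix $a \in \mathbb{G}_m(S)$ and work with the local parameter $\tau = 1 - t/a$. By the discussion preceding the statement, a point of $\Hk$ over $a$ whose underlying bundle is $\mathcal{E}_0$, forgetting level structure, is a modification $\varphi = \dot{\kappa} \circ g(\tau^{-1})$ with $g(\tau^{-1}) \in G[\tau^{-1}] = \mathrm{Aut}(\restr{\mathcal{E}_0}{\mathbb{P}^1 \setminus \{a\}})$. This uses the isomorphism \cite[Eqn.~(7.3.4)]{LamTemplier2024} between $\restr{\mathcal{E}_0}{\mathbb{P}^1\setminus\{a\}}$ and $\restr{\mathcal{E}_\kappa}{\mathbb{P}^1\setminus\{a\}}$ given by $\dot{\kappa}$.

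\textbf{Step 1: the fiber of $\Hk^\circ$.} The condition that $\pr_1(\varphi)$ lies in the open cell $T \times I(1)/I(2)$ is a condition on the level structure \eqref{level-str}. The point is that the pair $(g(1)^{-1}\dot{w}_P^{-1}, g(0)^{-1}c_a^{-1})$ has to define a point of $(B_-/U_-)\times (U \bmod I(2))$ lying in the orbit of $\mathcal{E}_0$. Concretely, the fiber at $0$ (where $\tau=1$) must lie in $B_-$ modulo $U_-$, and the fiber at $\infty$ (where $\tau^{-1}=0$) must lie in $U$ up to the deeper level. Two such $g$ give isomorphic objects exactly when they differ by the residual automorphism group of $\mathcal{E}_0$ compatible with the level structure. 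Following \cite[\S 7]{LamTemplier2024}, this group is $T$ acting at $\infty$, or more precisely the constants $G\subset G[\tau^{-1}]$ modulo what the level structure fixes.

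\textbf{Step 2: rigidify.} Put $h(\tau^{-1}) = c_a g(\tau^{-1}) g(0)^{-1} c_a^{-1}$. Then $h(0)=1$, so $h \in G[\tau^{-1}]_1$. Conversely, $g$ is recovered from $h$ together with the constant $g(0)$. The rigidification (requiring the modification to be trivial at $\infty$) amounts to quotienting out precisely this constant. Rewriting the condition from Step 1 in terms of $h$ gives $g(1) = c_a^{-1} h(1) c_a\, g(0)$, and the level condition becomes
\[ g(0)^{-1} c_a^{-1} h(1)^{-1} \dot{w}_P^{-1} \in B_- \cdot(\text{level at }\infty\text{ absorbed in } U). \]
Since $c_a \in T$ normalizes both $B_-$ and $U$, the factors $c_a^{\pm1}$ can be moved through. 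Here the sign and normalisation choices of Assumption \ref{Assump1} are used to check that $c_a$ is exactly the correction making \eqref{level-str} consistent. The condition then reduces to $h(1)\in \dot{w}_P^{-1}B_-U$: the $B_-$ factor is the level structure at $0$, which is free in $T$ modulo $U_-$, and the $U$ factor is the level structure at $\infty$.

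\textbf{Step 3: families and the main obstacle.} Everything above is algebraic in $a$, since $c_a$ is a regular function of $a\in\mathbb{G}_m$ by \eqref{ca} and $\tau$ depends algebraically on $a$. The constructions therefore glue to a morphism from the right-hand side to $\Hk^\circ$ and an inverse over $S$-points, giving the product decomposition with $\mathbb{G}_m$. By construction, the point attached to $h$ over $a$ is $\dot{\kappa}h$, which is the stated characterization. The main obstacle is Step 2: verifying carefully that conjugating by $c_a$ correctly accounts for the change of level structure at $\infty$, which is the point where \cite{LamTemplier2024} errs. It is also where one must check that the automorphisms killed by rigidification act freely, so that $\Hk^\circ$ is a scheme isomorphic to the stated product rather than a stack quotient of it. I would first check both points directly on $S$-points using \eqref{inf-iso}.
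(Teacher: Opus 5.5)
Your proposal is correct and follows essentially the same route as the paper. The paper cites the parametrization of \cite[Eqn.~(7.3.5)]{LamTemplier2024}, itself adapted from \cite[\S 5.2]{HNY}, and observes, as in your Step 2, that the corrected level structure at $\infty$ changes nothing because $c_a \in T$ gives $c_a B_- U c_a^{-1} = B_- U$. One small slip: from $g(1) = c_a^{-1}h(1)c_a g(0)$ you get $g(1)^{-1}\dot w_P^{-1} = g(0)^{-1}c_a^{-1}h(1)^{-1}c_a\dot w_P^{-1}$, so your display drops a factor $c_a$; this is harmless since $\dot w_P c_a \dot w_P^{-1} = \tilde{\omega}_i^\vee(-a^{-1}) \in T$ is absorbed into $B_-$.
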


\begin{proof}
    This is proved in \cite[Eqn.~(7.3.5)]{LamTemplier2024} over $\mathbb{C}$ and for $G$ of adjoint type, but the same proof applies over $\mathbb{F}$ and for an arbitrary split reductive group. In fact, loc.~cit. is a straightforward modification of the argument in \cite[\S 5.2]{HNY}, which works over $\mathbb{F}$ but uses the neutral component $\Bun^0$ instead of $\Bun^\kappa$. Here the missing factor of $c_a^{-1}$ in the level structure at $\infty$ makes no difference, since $c_aB_-Uc_a^{-1} = B_-U$.
\end{proof}

\begin{remark} \label{LTrem}
The factor $c_a^{-1}$ does make a difference in the formula for $f_T \colon \Hk^\circ \rightarrow T$ in \cite[Lemma 7.4]{LamTemplier2024}, as follows. For $h(\tau^{-1})$ representing a point of $\Hk^\circ$ over $a \in \mathbb{G}_m$ as in Proposition \ref{Hkcirc}, write $$h(1) = c_ag(1)g(0)^{-1}c_a^{-1} = \dot{w}_P^{-1} b_- u$$ for $b_- \in B_-$ and $u \in U$. Then $\dot{w}_P^{-1}b_- = c_a g(1)$ and $u = g(0)^{-1}c_a^{-1}$. Using the definition of $c_a$, the former equation gives $g(1)^{-1} \dot{w}_P^{-1} = (b_-)^{-1} \tilde{\omega}_i^\vee(-a^{-1})$. From the definition of the level structures in \eqref{level-str}, it follows that
\begin{equation} f_T(h,a) = \tilde{\omega}_i^\vee(-a^{-1}) (b_-)^{-1} \textrm{ mod } U_- \in B_-/U_- \cong T.
\end{equation}
The formula $f_+(h,a) = u \textrm{ mod } [U,U]$  in \cite[Lemma 7.4]{LamTemplier2024} is unchanged, as is the formula for $f_0(h,a)$ (as the latter correctly takes into account the difference \cite[Eqn.~(7.3.3)]{LamTemplier2024} between $\tau^{-1}$ and $t^{-1}$ at $\infty$).
\end{remark}

\begin{remark} \label{NormRemark}
The omission of the factor $c_a^{-1}$ 
in \cite{LamTemplier2024} results in an embedding of $T$ into the open cell of $\Bun^\kappa$ which is sheared by a $\mathbb{G}_m$-dependent element. 
This is responsible for the appearance of the multiplicative $D$-module on $\mathbb{G}_m$ as an extra factor in the proof of the weighted theorem \cite[Theorem 10.5]{LamTemplier2024}. The \'{e}tale avatar of the multiplicative $D$-module is a Kummer sheaf on $\mathbb{G}_m$. One does not expect an extra Kummer factor outside the Kloosterman sheaf to appear in the final determination of the weighted geometric crystal, and we will see in Theorem \ref{KLtoWC} below that our correction to $f_T$ indeed resolves this issue.    

While our taking into account the factor $c_a^{-1}$ ultimately removes the relative $\mathbb{G}_m$-dependence of the embedding of $T$, there are still nontrivial translations of $T$ defined over $\mathbb{Z}$ (for example, multiplication by $\tilde{\omega}_i^\vee(-1)$). We do not claim that \cite{LamTemplier2024} and \cite{HNY} use identical embeddings of the open cell, so our Kloosterman sheaves in Definition \ref{KDef} below should more accurately be considered ``Lam--Templier'' normalized, and having the level structure used by Xu--Zhu \cite{XZ22} (which is also consistent with Lam--Templier). This may cause minor differences for some groups between the Kloosterman sheaves of \cite{HNY} and the \'{e}tale version of \cite{LamTemplier2024}, e.g.~differing by pullback along multiplication by $-1$ on $\mathbb{G}_m$ or by an order-two rank-one local system on $\mathrm{Spec}(\mathbb{F})$, c.f.~also Remark \ref{LSpecF}. We do not pursue a precise comparison here.
\end{remark}

\subsection{Kloosterman sheaves}
For any point $a \in \mathbb{G}_m$, the affine Grassmannian $\Gr$ can be identified with the functor which sends an $\mathbb{F}$-scheme $S$ to the set of isomorphism classes
$$\left\{(\mathcal{E}, \varphi) \: \mid \: \mathcal{E } \text{ a } G\text{-bundle on }\mathbb{P}^1 \times S, \: \varphi \colon \restr{\mathcal{E}}{(\mathbb{P}^1 \setminus \{a\}) \times S} \cong \restr{\mathcal{E}_0}{(\mathbb{P}^1 \setminus \{a\}) \times S}\right\}.$$
By composing the modification $\varphi$ above with $\dot{\kappa}$, the fiber of $\pr_2 \colon \Hk \rightarrow \mathbb{G}_m$ over $a$ is identified with $\Gr$.
These fibers can be trivialized globally as in Proposition \ref{Hkcirc}, leading to identifications \begin{equation} \label{Hkincllusion} \Hk^\circ \overset{\ref{Hkcirc}}{\subset} G[\tau^{-1}]_1 \times \mathbb{G}_m \xrightarrow{(\cdot \dot{\kappa}, \: \mathrm{id})} \Gr^{\kappa} \times \mathbb{G}_m \subset \Gr \times \mathbb{G}_m= \text{Hk}.\end{equation}
Here  $\Gr^{\kappa}$ is the connected component of $\Gr$ containing the point $\tilde{\omega}_i^\vee(\tau)$.

For $\mu \in X_*(T)^+$ such that $\Gr_\mu \subset \Gr^{\kappa}$, let $\overline{\Gr}_{\mu} \subset \Gr_G$ be the closure of $\Gr_{\mu}$, and let $j^\mu \colon \Gr_{\mu} \rightarrow \overline{\Gr}_{\mu}$ be the inclusion. Using the intermediate extension functor, we define the perverse sheaf 
\begin{equation} \label{eq--normalize}
    \IC_\mu := j_{!*}^\mu (\Ql(\tfrac{1}{2}\dim \Gr_\mu)[\dim \Gr_\mu]).
\end{equation} The Tate twist $\Ql(\tfrac{1}{2}\dim \Gr_\mu)$ ensures that $\IC_\mu$ is pure of weight zero, in agreement with the convention in \cite[Remark 2.10]{HNY}.

Using \eqref{Hkincllusion}, we may define $$\Hk^\circ_\mu = \Hk^\circ \cap (\overline{\Gr}_\mu \times \mathbb{G}_m).$$
Taking the restrictions of maps defined earlier, we have the following diagram.
\begin{equation} \label{Hkdiagram} \xymatrix{
& \Hk^\circ_\mu\ar[ld]_-{(f_T, f_+, f_0)} \ar[rd]^-{\pr_2} & \\
 T \times U/[U,U] \times U_{-\theta} & & \mathbb{G}_m
}\end{equation}

\begin{definition} \label{KDef} Let $\text{AS}_{-\theta} = (u_{-\theta})_*(\text{AS}_{\psi})$ where $u_{-\theta} \colon \mathbb{G}_a \rightarrow U_{-\theta}$ is the trivialization chosen in Assumption \ref{Assump1}. Let $\text{AS}_{\phi} = \phi^*(\text{AS}_{\psi})$, where we view $\phi$ as a homomorphism $U/[U,U] \rightarrow \mathbb{G}_a$. The Kloosterman sheaf associated to any $\mu \in X_*(T)^+$ such that $\Gr_\mu \subset \Gr^{\kappa}$ is
   $$\text{Kl}_{\hat{G}}^{\mu}(\phi, \chi)
    :=  R\pr_{2, !} (f_T^* \tilde{\omega}_i^{\vee}(-1)^* \K_\chi \otimes f_+^* \text{AS}_{\phi} \otimes f_0^* \text{AS}_{- \theta} \otimes \restr{(\IC_\mu \boxtimes \Ql)}{\Hk^\circ}).$$
    Here $\IC_\mu \boxtimes \Ql$ is a sheaf on $\Hk^\circ$ via \eqref{Hkincllusion}, which we normalize as in \eqref{eq--normalize}.
\end{definition}

\begin{remark} \label{LSpecF}
The sheaf $\tilde{\omega}_i^{\vee}(-1)^* \K_\chi$ is the pullback of $\K_\chi$ along multiplication by the order-two element $\tilde{\omega}_i^{\vee}(-1) \in T$. Let $\mathcal{L}_\chi$ be the local system on $\mathrm{Spec}(\mathbb{F})$ which sends a geometric Frobenius element to $\chi(\tilde{\omega}_i^{\vee}(-1)) \in \{\pm 1\}$ (so $\mathcal{L}_\chi$ is trivial if $\chi(\tilde{\omega}_i^{\vee}(-1)) = 1$). By abuse of notation we also use $\mathcal{L}_\chi$ to denote its pullback to any $\mathbb{F}$-scheme. The multiplicative property of $ \K_\chi$ implies that $\tilde{\omega}_i^{\vee}(-1)^* \K_\chi \cong K_\chi \otimes \mathcal{L}_\chi$. Hence by the projection formula, we can replace $f_T^* \tilde{\omega}_i^{\vee}(-1)^* \K_\chi$ with $f_T^* \K_\chi$ in the definition of $\text{Kl}_{\hat{G}}^{\mu}(\phi, \chi)$ at the cost of taking the tensor product of $\text{Kl}_{\hat{G}}^{\mu}(\phi, \chi)$ with $\mathcal{L}_\chi$. 

The appearance of the factor $\mathcal{L}_\chi$ is not surprising; a similar multiplication by $(-1)$-map appears in the explicit calculation of the Kloosterman sheaf for the standard representation of $\GL_n$ in \cite[\S 3.2]{HNY}. In any case, while $\text{Kl}_{\hat{G}}^{\mu}(\phi, \chi)$ a priori belongs only to $D^b_{c}(\mathbb{G}_m, \Ql)$, by \cite[Theorem 1, \S 5.2]{HNY} it is in fact a local system, pure of weight zero and concentrated in cohomological degree zero.
\end{remark}

\section{Proof of the main result} \label{Sec:proof} 
It follows from Proposition \ref{Hkcirc} and \eqref{Hkincllusion} that for any $\mu \in X_*(T)^+$ such that $\Gr_\mu \subset \Gr^{\kappa}$, the natural map $\Hk^\circ_\mu \rightarrow \Hk^\circ_{\mu_{\ad}}$ is a universal homeomorphism, so as far as \'etale cohomology is concerned we may treat it like an isomorphism. In fact, if $\mu$ is minuscule then $\overline{\Gr}_{\mu} = \overline{\Gr}_{\mu_{\text{ad}}} \cong G/P_{\mu}$, so $\Hk^\circ_\mu \rightarrow \Hk^\circ_{\mu_{\ad}}$ is an isomorphism. However, $\Hk^\circ_\mu$ comes equipped with a map to $T$ instead of $T_{\ad}$, which we will analyze below in the minuscule case.

By \cite[Lemma 6.2]{LamTemplier2024}, any $x \in X_{\ad}$ may be written uniquely as $x= u_1 t \dot{w}_P u_2$ where $t \in Z(L_{\ad})$, $u_1 \in U_{P_{\ad}}$ (the unipotent radical of $P_{\ad}$) and $u_2 \in U_{\ad} \cap (B_{\text{ad}})_- \cdot \dot{w}_P^{-1} \cdot (B_{\text{ad}})_-$. Note that the restriction $\alpha_i \colon Z(L_{\ad}) \rightarrow \mathbb{G}_m$ is an isomorphism. By \cite[Lemma 7.8]{LamTemplier2024}, there is an isomorphism
\begin{align} \label{XadIso} \begin{split} \tilde{\iota} \colon X_{\ad} & \xrightarrow{\sim} \Hk^\circ_{\omega_i^\vee} \subset G_{\ad}[\tau^{-1}]_1 \times \mathbb{G}_m \\ x & \mapsto (\dot{\kappa}_{\ad}^{-1} t^{-1} u_1 t \dot{\kappa}_{\ad}, \alpha_i(t)). \end{split} \end{align}
Here $\dot{\kappa}_{\ad} = {\omega}_i^\vee(\tau) \dot{w}_P \in G_{\ad}(\mathbb{F}[\tau^{\pm 1}])$ is the image of $\dot{\kappa}$ in the adjoint quotient. 

\begin{proposition} \label{XadMaps}
    Identify $\Hk^\circ_{\tilde{\omega}_i^\vee} \cong \Hk^\circ_{\omega_i^\vee}$ as above (noting that $(\tilde{\omega}_i^\vee)_{\ad} = \omega_i^\vee$), so that the maps $(f_T, f_+, f_0)$ are defined on $\Hk^\circ_{{\omega}_i^\vee}$. Then with respect to the isomorphism \eqref{XadIso}, we have
    \begin{align*}
        f_T(\tilde{\iota}(x)) &= \tilde{\omega}_i^\vee(-1)\gamma_{\ad}^T(x)^{-1} \\
        f_+(\tilde{\iota}(x)) &= u_2^{-1} \textnormal{ mod } [U,U] \\
        f_0(\tilde{\iota}(x)) &= u_{-\theta}(-\phi(u_1)).
    \end{align*}
Here $\gamma_{\ad}^T \colon X_{\ad} \rightarrow T$ is the lifted weight map from Corollary \ref{cor-weightSplit}, and the element $t \in Z(L_{\ad})$ is viewed as an element of $T$ via the splitting \eqref{eq-split}.
\end{proposition}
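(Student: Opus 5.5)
The plan is to compute directly from the explicit description of $\tilde{\iota}$ in \eqref{XadIso}, the rigidified description of $\Hk^\circ$ in Proposition \ref{Hkcirc}, and the corrected formulas for $(f_T,f_+,f_0)$ in Remark \ref{LTrem}. Write $x = u_1 t \dot{w}_P u_2 \in X_{\ad}$ and $h = \dot{\kappa}_{\ad}^{-1} t^{-1} u_1 t \dot{\kappa}_{\ad}$, with $a = \alpha_i(t)$. We then evaluate $h(1)$, factor it as $\dot{w}_P^{-1} b_- u$, and read off the three components. Since $P$ is minuscule, $\omega_i^\vee$ pairs to $1$ with every root in $U_P$. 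Hence conjugating $u_1 \in U_{P_{\ad}}$ by $\omega_i^\vee(\tau)$ multiplies each root coordinate by $\tau^{\pm 1}$ uniformly. This is what makes $h$ lie in $G[\tau^{-1}]_1$.

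The first step handles $f_+$ and $f_T$. At $\tau^{-1}=1$ the translation part disappears, so $h(1) = \dot{w}_P^{-1} t^{-1} u_1 t \dot{w}_P$. Using $x = u_1 t\dot{w}_P u_2 \in B_-$, we get $t^{-1}u_1 t \dot{w}_P = t^{-1} x u_2^{-1}$, and therefore
\[h(1) = \dot{w}_P^{-1}\,(t^{-1}x)\,u_2^{-1}.\]
So $b_- = t^{-1}x \in B_-$ and $u = u_2^{-1}$. This immediately gives $f_+ = u_2^{-1} \bmod [U,U]$.

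For $f_T$, Remark \ref{LTrem} gives $f_T = \tilde{\omega}_i^\vee(-a^{-1})\, b_-^{-1} \bmod U_-$, which equals $\tilde{\omega}_i^\vee(-a^{-1})\, t\, \gamma(x)^{-1}$. Here $\gamma(x)$ must be interpreted through the lift $\gamma_{\ad}^T$ of Corollary \ref{cor-weightSplit}, and $t$ is viewed in $T$ via the splitting. Since $t \in Z(L_{\ad})\subset T$ equals $\tilde{\omega}_i^\vee(\alpha_i(t)) = \tilde{\omega}_i^\vee(a)$, the factors combine to $\tilde{\omega}_i^\vee(-1)\,\gamma_{\ad}^T(x)^{-1}$, as claimed. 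I will check that the $Z(G)$ components are consistent, i.e.\ that the lifted $t$ and the lifted $\gamma$ are computed with the same section. This should hold because both lifts come from the single isomorphism \eqref{eq-split}.

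The third step, $f_0$, is the main obstacle. Here one must expand $h$ near $t=\infty$, use $\tau^{-1} = -a t^{-1} + O(t^{-2})$, and extract the coefficient on the affine simple root $-\theta$, including the conjugation by $c_a$. I would first decompose $u_1$ along root subgroups of $U_P$. Only the $\alpha_i$-component contributes to the affine simple coordinate, since the other roots of $U_P$ map under $w_P^{-1}$ to non-simple affine roots, which die in $I(1)/I(2)$. After conjugation by $t$ and $\dot{\kappa}_{\ad}$, that component becomes $u_{-\theta}(\pm a^{-1}\phi(u_1)\tau^{-1})$, using Assumption \ref{Assump1}'s sign $\dot{w}_P^{-1}u_{\alpha_i}(x)\dot{w}_P = u_{-\theta}(-x)$. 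Substituting $\tau^{-1}\sim -a t^{-1}$ and accounting for $c_a$ conjugation, via $\langle -\theta, w_P^{-1}\omega_i^\vee\rangle = \langle \alpha_i,\omega_i^\vee\rangle = 1$, should cancel the $a$ factors and produce $u_{-\theta}(-\phi(u_1))$. I expect the sign bookkeeping here to require care. Since the formula for $f_0$ is unchanged from \cite[Lemma 7.8]{LamTemplier2024}, I would cross-check against loc.~cit.\ rather than rederive every sign from scratch.
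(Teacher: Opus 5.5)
Your treatment of $f_+$ and $f_T$ matches the paper's argument. You factor $h(1)=\dot w_P^{-1}(t^{-1}x)u_2^{-1}$, read off $u=u_2^{-1}$ and $b_-=t^{-1}x$, insert the corrected factor $\tilde\omega_i^\vee(-a^{-1})$ from Remark \ref{LTrem}, and use $t=\tilde\omega_i^\vee(a)$. The consistency check you defer is exactly what the paper carries out. It lifts $x$ to $\tilde x=u_1\tilde t\dot w_P u_2\in X$ with $\tilde t=\tilde\omega_i^\vee(\alpha_i(t))$, and notes that the corresponding point of $\Hk^\circ_{\tilde\omega_i^\vee}$ is $\dot\kappa^{-1}\tilde t^{-1}u_1\tilde t\dot\kappa$. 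Then $b_-=\tilde t^{-1}\tilde x\in B_-^{w_P}$, and $\tilde t\,\gamma(b_-)=\gamma(\tilde x)=\gamma_{\ad}^T(x)$ by the construction in Corollary \ref{cor-weightSplit}. For $f_0$ the paper simply cites \cite[Proposition 7.9]{LamTemplier2024}; Lemma 7.8 there is the isomorphism $\tilde\iota$, not the $f_0$ formula. That citation is also your fallback, so the overall argument is fine.

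Your heuristic $f_0$ computation, however, would not produce the stated formula if carried out as you describe.

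\emph{The powers of $a$ already cancel.} After conjugation by $t$ and $\dot\kappa_{\ad}$, the relevant first-order coordinate of $h$ is $u_{-\theta}(-a^{-1}\phi(u_1)\tau^{-1})$. Substituting $\tau^{-1}=-at^{-1}+O(t^{-2})$ gives $u_{-\theta}(\phi(u_1)t^{-1})$, with no help from $c_a$.

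\emph{No further conjugation by $c_a$ belongs here.} It is already built into the rigidification $h=c_a g g(0)^{-1}c_a^{-1}$. The level structure at $\infty$ is $g(\tau^{-1})^{-1}$ times the inverse of \eqref{inf-iso}, i.e.\ $g^{-1}c_a^{-1}s=u\,h^{-1}s$. Here $u=g(0)^{-1}c_a^{-1}$ and $s$ lies in the first congruence subgroup of $T$, which is contained in $I(2)$. An extra conjugation by $c_a^{\pm1}$ would rescale the coordinate by $(-a^{-1})^{\pm1}$, since $(-\theta)(c_a)=\alpha_i(\tilde\omega_i^\vee(-a^{-1}))=-a^{-1}$. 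The $a$'s would then no longer cancel.

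\emph{The sign comes from inversion.} The sign you leave as $\pm$ is fixed because $f_0$ is read off from $h^{-1}$, not from $h$. This turns $\phi(u_1)$ into $-\phi(u_1)$, giving $u_{-\theta}(-\phi(u_1))$.
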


\begin{proof}
    The formulas for $f_+$ and $f_0$ are proved in \cite[Proposition 7.9]{LamTemplier2024}.
    There it is also shown that, with respect to their parametrization of $\Hk^
    \circ$, one has
    $$f_{T_{\ad}}(\tilde{\iota}(x)) = t x^{-1} \text{ mod } (U_\text{ad})_- = t \gamma_{\ad}(x)^{-1} \in T_{\ad}.$$
    This is obtained by noting that $(t^{-1}x)(u_2)^{-1} \in B_- U$ and then applying the formula for $f_T$ in \cite[Lemma 7.4]{LamTemplier2024} (which simply records the inverse $(tx^{-1})^{-1} =(b_-)^{-1}$ of the factor in $B_- \textrm{ mod } U_-$). As we noted in Remark \ref{LTrem}, the formula for $f_T$ should actually be $\omega_i^{\vee}(-\alpha_i(t)^{-1})(b_-)^{-1} \textrm{ mod } U_-$. Taking this into account, we get the desired formula $f_{T_{\ad}}(\tilde{\iota}(x)) = \omega_i^\vee(-1) \gamma^{T{_{\ad}}}(x)^{-1}$ in the adjoint case.

    In the general case, for $x = u_1t\dot{w}_Pu_2 \in X_\ad$, the choice of \eqref{eq-split} determines an isomorphism $X = Z(G) \times X_{\ad}$, and hence a lift $\tilde{x}$ of $x$ to $X$. Explicitly, $\tilde{x} = u_1 \tilde{t} \dot{w}_P u_2$ where $\tilde{t} = \tilde{\omega}_i^{\vee}(\alpha_i(t))$. An inverse to $\Hk^\circ_{\tilde{\omega}_i^\vee} \cong \Hk^\circ_{\omega_i^\vee}$ with respect to Proposition \ref{Hkcirc} is then given by sending $\tilde{\iota}(x)$ to $(\dot{\kappa}^{-1} \tilde{t}^{-1} u_1 \tilde{t} \dot{\kappa}, \alpha_i(t))$. 
    Now by the same argument as in \cite[Proposition 7.9]{LamTemplier2024}, and taking into account our Remark \ref{LTrem}, 
    we have $f_T(\tilde{x}) = \tilde{\omega}_i^\vee(-1) (\tilde{x})^{-1} \mod U_-$. The map $\gamma_{\ad}^T$ was constructed in Corollary \ref{cor-weightSplit} by using the decomposition $X_{\ad} = Z(L_\ad) \times (B_{\ad})_-^{w_P}$, and the isomorphism $(B_{\ad})_-^{w_P} \cong B_-^{w_P}$ in Lemma \ref{Bad}. Writing $\tilde{x} = \tilde{t} \cdot ((\tilde{t})^{-1}u_1 \tilde{t}) \dot{w}_P u_2$, the 
    second factor lies in $B_-^{w_P}$ and it follows that $f_T(\tilde{x}) = \tilde{\omega}_i^\vee(-1)\gamma_{\ad}^T(x)^{-1}$.
\end{proof}

\begin{theorem} \label{KLtoWC} 
View $\alpha_i$ as an isomorphism $Z(L_{\ad}) \rightarrow \mathbb{G}_m$ by restriction from $T_{\ad}$, and let $\K_{\chi}^{Z(G)}$ be the restriction of $\K_{\chi}$ from $T$ to $Z(G)$. Then with respect to our chosen decomposition $Z(L) \cong Z(G) \times Z(L_{\ad})$, there exists an isomorphism
    $$\WC_G^P(\phi, \chi)  \cong \K_{\chi}^{Z(G)} \boxtimes \alpha_i^*\textnormal{Kl}_{\hat{G}}^{\tilde{\omega}_i^\vee}(\phi, \chi^{-1}).$$
\end{theorem}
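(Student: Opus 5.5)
The plan is to reduce first to the adjoint crystal and then transport everything along the isomorphism $\tilde{\iota}$ of \eqref{XadIso}. By Lemma \ref{lem-WCad}, with $d = \dim G/P$,
$$\WC_G^P(\phi, \chi) \cong \K_{\chi}^{Z(G)} \boxtimes R\pi_{\ad, !}\bigl((\gamma_{\ad}^T)^*\K_{\chi} \otimes f_{\ad}^* \A\bigr)(\tfrac{d}{2})[d].$$
It therefore suffices to produce an isomorphism on $Z(L_{\ad})$ between the second factor and $\alpha_i^*\mathrm{Kl}_{\hat{G}}^{\tilde{\omega}_i^\vee}(\phi, \chi^{-1})$. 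Since $\alpha_i \colon Z(L_{\ad}) \to \mathbb{G}_m$ is an isomorphism, this is the same as comparing $R(\alpha_i \circ \pi_{\ad})_!(\cdots)$ with the Kloosterman sheaf on $\mathbb{G}_m$.

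Next, use $\tilde{\iota} \colon X_{\ad} \xrightarrow{\sim} \Hk^\circ_{\omega_i^\vee} \cong \Hk^\circ_{\tilde{\omega}_i^\vee}$. By \eqref{XadIso} we have $\pr_2 \circ \tilde{\iota} = \alpha_i \circ \pi_{\ad}$, so proper base change and pushforward along an isomorphism turn $R\pr_{2,!}$ on $\Hk^\circ_{\tilde{\omega}_i^\vee}$ into $R(\alpha_i\circ\pi_{\ad})_!$ on $X_{\ad}$. We then pull back each factor of the integrand in Definition \ref{KDef} (with $\chi^{-1}$) along $\tilde{\iota}$, using Proposition \ref{XadMaps}.

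\textbf{Torus factor.} Here
$$\tilde{\iota}^* f_T^*\, \tilde{\omega}_i^\vee(-1)^*\K_{\chi^{-1}} = \bigl(x \mapsto \tilde{\omega}_i^\vee(-1)^2\gamma_{\ad}^T(x)^{-1}\bigr)^*\K_{\chi^{-1}}.$$
Since $\tilde{\omega}_i^\vee(-1)^2 = 1$ and inversion pulls $\K_{\chi^{-1}}$ back to $\K_\chi$, this equals $(\gamma_{\ad}^T)^*\K_\chi$. This is exactly where the correction in Remark \ref{LTrem} matters: it absorbs the translation by $\tilde{\omega}_i^\vee(-1)$, so no extra Kummer factor survives.

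\textbf{Additive factors.} By Proposition \ref{XadMaps}, $f_+$ pulls back to $u_2^{-1} \bmod [U,U]$ and $f_0$ to $u_{-\theta}(-\phi(u_1))$. Hence
$$\tilde{\iota}^*\bigl(f_+^*\mathrm{AS}_\phi \otimes f_0^*\mathrm{AS}_{-\theta}\bigr) \cong (-\phi(u_2) - \phi(u_1))^*\mathrm{AS}_\psi = f_{\ad}^*\A,$$
because $\phi$ is additive on $U/[U,U]$ and the pullback of $\mathrm{AS}_\psi$ along negation is $\mathrm{AS}_{\psi^{-1}}$.

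\textbf{IC factor.} Since $\tilde{\omega}_i^\vee$ is minuscule, $\overline{\Gr}_{\tilde{\omega}_i^\vee} = \Gr_{\tilde{\omega}_i^\vee} \cong G/P_{\tilde{\omega}_i^\vee}$ is smooth of dimension $\dim G/P = d$. Thus $\IC_{\tilde{\omega}_i^\vee} = \Ql(\tfrac d2)[d]$, and its restriction to $\Hk^\circ$ pulls back to the constant sheaf with the same twist and shift. I still need to check that $\dim G/P_{\tilde{\omega}_i^\vee} = \dim G/P$; this holds because $P_{\tilde{\omega}_i^\vee}$ is conjugate to the parabolic opposite to $P$, or equivalently because both dimensions equal the number of positive roots $\alpha$ with $\langle \alpha, \omega_i^\vee\rangle = 1$.

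Combining these identifications of the integrand and applying the projection formula gives the claim. The main obstacle is the bookkeeping of signs and inverses in the torus factor. Concretely, one must verify that $\gamma_{\ad}^T$ and the lift $\tilde{x}$ used in Proposition \ref{XadMaps} are compatible with the splitting used in Lemma \ref{lem-WCad}, so that the $T$-valued (not merely $T_{\ad}$-valued) maps agree on the nose. I would check this by tracking the decomposition $\tilde{x} = \tilde{t}\cdot(\tilde{t}^{-1}u_1\tilde{t})\dot{w}_Pu_2$ through both constructions.
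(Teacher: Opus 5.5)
Your proposal is correct and is essentially the paper's proof. You reduce via Lemma \ref{lem-WCad}, identify $(f_{\ad},\pi_{\ad},\gamma_{\ad}^T)$ with $\bigl(-(\phi(f_+)+u_{-\theta}^{-1}\circ f_0),\ \pr_2,\ f_T^{-1}\tilde{\omega}_i^\vee(-1)\bigr)$ through $\tilde{\iota}$ and Proposition \ref{XadMaps}, and match the Kummer, Artin--Schreier and $\IC$ factors using the character-sheaf property and smoothness of $\overline{\Gr}_{\tilde{\omega}_i^\vee}$. The compatibility you flag at the end is already settled in the proof of Proposition \ref{XadMaps}, which uses the same lift $\gamma_{\ad}^T$ from Corollary \ref{cor-weightSplit} as Lemma \ref{lem-WCad}, so nothing further needs checking there.
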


\begin{proof}
By Proposition \ref{XadMaps}, the diagram
$$
\xymatrix{
& X_{\ad} \ar[ld]_-{f_\ad} \ar[d]^-{\pi_{\ad}} \ar[rd]^-{\gamma^T_{\ad}} & \\
\mathbb{A}^1 & Z(L_{\ad}) & T
}
$$
may be identified with the diagram
$$
\xymatrix{
& \Hk^\circ_{\tilde{\omega}_i^\vee} \ar[ld]_-{-(\phi(f_+)+(u_{-\theta})^{-1} \circ f_0)} \ar[d]^-{\pr_2} \ar[rd]^-{\frac{1}{f_T} \cdot \tilde{\omega}_i^\vee(-1)} & \\
\mathbb{G}_a & \mathbb{G}_m & T
}
$$
In the above diagram, $(u_{-\theta})^{-1} \colon U_{-\theta} \rightarrow \mathbb{G}_a$ is the inverse of the trivialization $u_{-\theta}$, and $\frac{1}{f_T}$ is the composition of $f_T \colon \Hk^\circ_{\tilde{\omega}_i^\vee}  \rightarrow T$ with the inversion map on $T$. Both of the outer maps are constructed using two maps to the target and the group structure on the target.

Since $\text{AS}_{\psi}$ is a character sheaf, its pullback along the addition map $\text{add} \colon \mathbb{G}_a \times \mathbb{G}_a \rightarrow \mathbb{G}_a$ satisfies $\text{add}^{-1}(\text{AS}_{\psi}) \cong \text{AS}_{\psi} \boxtimes \text{AS}_{\psi}$. The sheaf $f_+^* \text{AS}_{\phi} \otimes f_0^* \text{AS}_{- \theta}$ appearing in Definition \ref{KDef} is thus canonically isomorphic to $(\phi(f_+)+(u_{-\theta})^{-1} \circ f_0)^{-1} (\text{AS}_{\psi})$. Additionally, since $\tilde{\omega}_i^\vee$ is minuscule,  $\overline{\Gr}_{\tilde{\omega}_i^\vee} \cong G/P$ is smooth, so the object $ \restr{(\IC_{\tilde{\omega}_i^\vee} \boxtimes \Ql)}{\Hk^\circ}$ in Definition \ref{KDef} is $\Ql(\tfrac{d}{2})[d]$ supported on $\Hk^\circ_{\tilde{\omega}_i^\vee}$.
By similar reasoning, we have 
$$(\frac{1}{f_T} \cdot \tilde{\omega}_i^\vee(-1))^{*}(\K_\chi) \cong f_T^{*} \tilde{\omega}_i^\vee(-1)^*(\K_{\chi^{-1}}).$$
It follows that
$$R\pi_{\ad, !} ((\gamma_{\ad}^T)^*\K_{\chi} \otimes f_{\ad}^* \A) (\tfrac{d}{2})[d] \cong \text{Kl}_{\hat{G}}^{\tilde{\omega}_i^\vee}(\phi, \chi^{-1}).$$
We may thus conclude using  Lemma \ref{lem-WCad}.
\end{proof}

As a consequence, $\WC_G^P(\phi, \chi)$ is a local system on $Z(L)$ as opposed to a general object of $D^b_c(Z(L), \Ql)$. Applying the function-sheaf dictionary, we also get the following result.

\begin{corollary} \label{maincor}
    For any $z \in Z(L)(\mathbb{F})$, write $z = z_0 z_1$ where $z_0 \in Z(G)(\mathbb{F})$ and $z_1 \in Z(L_{\ad})(\mathbb{F})$. Then
$$ \textnormal{Kl}_{\hat{G}}^{\tilde{\omega}_i^\vee}(\phi, \chi^{-1}; \alpha_i(z))  = \overline{\chi(z_0)} (-1)^{\dim G/P} q^{\tfrac{\dim G/P}{2}} \besselFunction_{\tau, \fieldCharacter}\left(z \specialWeylElement{P}\right).
$$

\end{corollary}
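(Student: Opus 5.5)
The plan is to read off the corollary by applying Grothendieck's function-sheaf dictionary to both sides of the isomorphism of Theorem \ref{KLtoWC}, and then to invoke Lemma \ref{Trace-lem}. Concretely, I will fix $z \in Z(L)(\mathbb{F})$. Via the decomposition $Z(L) \cong Z(G) \times Z(L_{\ad})$ of Corollary \ref{cor:splitting-of-center-of-levi-part}, which is induced by the chosen splitting \eqref{eq-split}, I write $z = z_0 z_1$. Since the splitting is defined over $\mathbb{F}$, $z_0$ and $z_1$ are $\mathbb{F}$-points. Taking the trace of Frobenius at $z$ on both sides of Theorem \ref{KLtoWC}, the Künneth formula turns the exterior tensor product into a product of traces. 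This gives
$$\textnormal{Tr}_{\WC_G^P(\phi,\chi)}(z) = \textnormal{Tr}_{\K_\chi^{Z(G)}}(z_0)\cdot \textnormal{Tr}_{\textnormal{Kl}_{\hat{G}}^{\tilde{\omega}_i^\vee}(\phi,\chi^{-1})}(\alpha_i(z_1)).$$
Here I use that pullback along $\alpha_i$ corresponds to precomposition of trace functions, and that $\alpha_i(z) = \alpha_i(z_1)$ because $\alpha_i$ is trivial on $Z(G)$.

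Next I evaluate each factor. By the defining property of character sheaves, the first factor is $\chi(z_0)$. The second factor is by definition the value $\textnormal{Kl}_{\hat{G}}^{\tilde{\omega}_i^\vee}(\phi,\chi^{-1};\alpha_i(z))$. Since this object is a local system in degree zero (Remark \ref{LSpecF}), this value is an honest trace, although that fact is not needed for the identity. For the left side, Lemma \ref{Trace-lem} gives $(-1)^d q^{d/2}\besselFunction_{\tau,\fieldCharacter}(z\specialWeylElement{P})$ with $d = \dim G/P$.

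Finally I solve for the Kloosterman value by dividing by $\chi(z_0)$. Because $\chi$ takes values in roots of unity, $\chi(z_0)^{-1} = \overline{\chi(z_0)}$ under the fixed isomorphism $\mathbb{C}\cong\Ql$, and this yields exactly the stated formula.

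The only point needing care, and so the main obstacle, is the bookkeeping of sign and twist conventions. The shift $[d]$ contributes $(-1)^d$, and the twist $(\tfrac{d}{2})$ contributes $q^{-d/2}$. Both are already absorbed into Lemma \ref{Trace-lem}, so they must not be counted a second time when the exterior product is expanded. I will check this by noting that in Theorem \ref{KLtoWC} the shift and twist sit entirely inside the Kloosterman factor, coming from $\IC_{\tilde{\omega}_i^\vee}$; the Kummer factor on $Z(G)$ carries neither, so its trace is exactly $\chi(z_0)$.
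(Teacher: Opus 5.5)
Your proposal is correct and follows the paper's proof, which simply combines Theorem \ref{KLtoWC} with Lemma \ref{Trace-lem} via the function-sheaf dictionary. Your bookkeeping fills in exactly the routine steps the paper leaves implicit: the stalks of the external product, $\alpha_i(z)=\alpha_i(z_1)$, the shift and twist living only in the Kloosterman factor, and $\chi(z_0)^{-1}=\overline{\chi(z_0)}$.
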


\begin{proof}
    This follows from Theorem \ref{KLtoWC} and Lemma \ref{Trace-lem}.
\end{proof}

\begin{remark} \label{notTorus} We now explain how to generalize the main result to the case where $Z(G)$ is (possibly) not a torus.
    First, choose an embedding $(G,T) \rightarrow (G', T')$ as in Lemma \ref{lemm-G'}. Choose a splitting of the inclusion $T \rightarrow T'$ and let $\chi' \colon T'(\mathbb{F}) \rightarrow \Ql^{\times}$ be the corresponding extension of $\chi$ as in Lemma \ref{lem-WC'}. Then choose an isomorphism $T' \cong Z(G') \times T'_{\ad}$ as in Lemma \ref{lem-split} and let $\tilde{\omega}_i^{\vee} \in X_*(T')$ be the resulting lift of $\omega_i^{\vee}$. By Lemma \ref{lem-WC'} and Theorem \ref{KLtoWC} we have
    $$\WC_G^P(\phi, \chi) \cong  \restr{\K_{\chi'}^{Z(G')} \boxtimes \alpha_i^*\textnormal{Kl}_{\hat{G}'}^{\tilde{\omega}_i^\vee}(\phi, (\chi')^{-1})}{Z(L)}.$$
    Here we view $\alpha_i$ as an isomorphism $Z((L')_{\ad}) \rightarrow \mathbb{G}_m$. Consequently, if we replace $\hat{G}$ with $\hat{G}'$ in the Kloosterman sheaf then Corollary \ref{maincor} still holds (and the right side is unchanged).
\end{remark}

\begin{remark} \label{rem:Norm}
	By normalizing Kloosterman sheaves to have weight zero, the eigenvalues of Frobenius are unitary. Up to a factor of complex norm 1, the relation in Corollary \ref{maincor} is ``trace of Frobenius equals square root of double coset size times value of Bessel function.'' We explain why the Bessel function side of this equality is natural. In what follows, we identify algebraic groups with their $\mathbb{F}$-points.
	
	Consider the space of all $\fieldCharacter \circ \phi$-Whittaker functions $\Ind{U}{G}{\fieldCharacter \circ \phi}$. This space is equipped with a natural inner product which is invariant under the $G$-action:
	$$\innerproduct{W_1}{W_2} = \sum_{g \in U \backslash G} W_1\left(g\right) \conjugate{W_2\left(g\right)}.$$
	The space $\Ind{U}{G}{\fieldCharacter \circ \phi}$ is multiplicity free, see e.g.~\cite[Theorem 12.3.4]{DJ20}. For any $\left(U, \fieldCharacter \circ \phi\right)$-generic representation $\tau$ let $\Whittaker\left(\tau, \fieldCharacter \circ \phi\right)$ be the unique subspace of $\Ind{U}{G}{\fieldCharacter \circ \phi}$ that is isomorphic to $\tau$. This is the \emph{Whittaker model of $\tau$} with respect to the character $\fieldCharacter \circ \phi$. By Frobenius reciprocity, for every such $\tau$ there exists a unique (up to scalar) element in $\Whittaker\left(\tau, \fieldCharacter \circ \phi\right)$ that is right $(U, \fieldCharacter \circ \phi)$-equivariant.
	
	Consider the subspace $\mathcal{BW}_{U}^G\left(\psi \circ \phi\right) \subset \Ind{U}{G}{\fieldCharacter \circ \phi}$ consisting of functions that are right $\left(U, \fieldCharacter \circ \phi\right)$-equivariant. We have two natural bases for this space:
	\begin{enumerate}
		\item We say that a double coset $U g U \in U \backslash G \slash U$ is \emph{compatible with $\fieldCharacter \circ \phi$} if there exists a nonzero bi-$\left(U, \fieldCharacter\circ\phi\right)$-equivariant function on $G$ that does not vanish on $U g U$. For each such compatible coset we choose a representative of the form $tw$ where $t \in T$ and $w$ is a lift of an element of the Weyl group and we let $W_{U tw U}$ be the unique $\left(U, \fieldCharacter \circ \phi\right)$ bi-equivariant function on $G$ supported on $U tw U$ whose value at $tw$ is $1$. Then $$\left(W_{U tw U} \mid U tw U \text{ is } \fieldCharacter \circ \phi \text{ compatible}\right)$$ is a basis for $\mathcal{BW}_{U}^G\left(\psi \circ \phi\right)$.
		\item For every irreducible $\left(U, \fieldCharacter\circ\phi\right)$-generic representation $\tau$ there exists a unique element $\besselFunction_{\tau, \fieldCharacter} \in \Whittaker\left(\tau, \fieldCharacter \circ \phi\right)$ that is $\left(U, \fieldCharacter\circ\phi\right)$-equivariant from the right such that its value at the identity is $1$. Then $$\left(\besselFunction_{\tau, \fieldCharacter} \mid \tau \text{ is } \fieldCharacter \circ \phi \text{ generic}\right)$$
		is also a basis for $\mathcal{BW}_{U}^G\left(\psi \circ \phi\right)$.
	\end{enumerate}
	Both bases are orthogonal bases with respect to $\innerproduct{\cdot}{\cdot}$. Indeed, it is clear that $\left(W_{U tw U}\right)$ is orthogonal, and $\left(\besselFunction_{\tau, \fieldCharacter}\right)$ is also orthogonal because matrix coefficients of non-isomorphic representations are orthogonal. However, the bases are not orthonormal: we have
	$$\innerproduct{W_{U tw U}}{W_{U tw U}}  = \frac{\sizeof{U tw U}}{\sizeof{U}} = \frac{\sizeof{U}}{\sizeof{\left(w^{-1} U w\right) \cap U}},$$ and by writing $\besselFunction_{\tau, \fieldCharacter}$  as a matrix coefficient and using the Schur orthogonality relations it can be shown that  $$\innerproduct{\besselFunction_{\tau, \fieldCharacter}}{\besselFunction_{\tau, \fieldCharacter}} = \frac{\grpIndex{G}{U}}{\dim \tau}.$$
	Thus, if we define $$W'_{U tw U} = \left(\frac{\sizeof{\left(w^{-1} U w\right) \cap U}}{\sizeof{U}}\right)^{1/2} \cdot W_{U tw U}$$ then the basis $\left(W'_{U tw U}\right)$ is an orthonormal basis.

    It follows from the definitions that for $\tau$ a generic principal series representation and setting $t=z$, $w=\dot{w}_P$ as in Corollary \ref{maincor}, we have
	$$\innerproduct{\besselFunction_{\tau, \fieldCharacter}}{W'_{U z\dot{w}_P U}} = q^{\tfrac{\dim G/P}{2}} \besselFunction_{\tau, \fieldCharacter}\left(z \specialWeylElement{P}\right).$$ Thus, Corollary \ref{maincor} implies that the above inner product is, up to an explicit factor of complex norm $1$, equal to the trace of Frobenius on a Kloosterman sheaf.
    
	\end{remark}

\section{Quasi-split groups} \label{Sec:Qsplit}
We continue using the same notation surrounding $G$ and its auxiliary data from Section \ref{sec:K}. In particular, the derived subgroup is almost simple and $Z(G)$ is a torus. 

Suppose we have a nontrivial automorphism of the Dynkin diagram of $G$ which fixes $\alpha_i$, where $\omega_i^{\vee}$ is minuscule. Let $\sigma$ be a pinning-preserving automorphism of $G$ which realizes this automorphism of the Dynkin diagram. Since we assume that our trivializations of root groups are realized by homomorphisms from $\SL_2$ then $\sigma$ necessarily preserves the trivializations of the negative simple root groups as well. 

We refer to \cite[Figure 2]{LamTemplier2024} for all possible Dynkin diagrams with minuscule nodes, but note that this table gives the minuscule fundamental weights (not coweights). It follows from inspection that $\sigma$ has order $2$ and there are three possibilities:

\begin{enumerate}
    \item $G$ is of type $A_n$ where $n \geq 3$ is odd, and $\sigma$ realizes the reflection about the center.
    \item \label{case3} $G$ is of type $D_n$ where $n \geq 4$, and $\sigma$ realizes the reflection which swaps the two nodes at the end of the branch.
    \item \label{case2} $G$ is of type $D_4$ and $\sigma$ realizes either of the two reflections not covered in case (\ref{case3}).
\end{enumerate}

It is well-known (and crucial for us) that in the three cases above, $\sigma$ also preserves the trivializations of the highest and lowest roots $\pm \theta$. (This can be deduced by writing a highest root vector in terms of Lie brackets of simple root vectors and applying $\sigma$, see e.g.~\cite[\S 8.3]{Kac90} for similar arguments.)

Recall that $\mathbb{F}$ is a finite field of order $q$, and let  $\mathbb{E}/\mathbb{F}$ be a quadratic field extension.
We define the quasi-split group $\mathbb{G}$ over $\mathbb{F}$ by taking $(\sigma \circ \mathrm{Frob}_q)$-invariants in the Weil restriction of $G$ along $\mathbb{E} / \mathbb{F}$. Equivalently, for any $\mathbb{F}$-algebra $R$ we have the $\mathbb{F}$-algebra automorphism $\overline{(-)}$ of $\mathbb{E} \otimes_{\mathbb{F}} R$ determined by $\overline{(a \otimes r)} = a^q \otimes r$, and we set
\begin{equation} \label{Geq} \mathbb{G}(R) = \{ g \in G(\mathbb{E} \otimes_{\mathbb{F}} R) \: \mid \: \sigma(\overline{g}) = g \}.\end{equation}
Since $B$, $U$, and $T$ are preserved by $\sigma$, the same equations allow us to define a Borel $\mathbb{B} \subset \mathbb{G}$, its unipotent radical $\mathbb{U} \subset \mathbb{B}$, and a maximal torus $\mathbb{T} \subset \mathbb{B}$. Similarly, since $\alpha_i$ is preserved by $\sigma$ then the minuscule parabolic $P=LU_P$ descends to $\mathbb{P} = \mathbb{L} \mathbb{U}_P$ over $\mathbb{F}$. The same comments apply to the adjoint quotient $\mathbb{G}_{\ad}$. 

\begin{remark}
    It is possible that there is no $\sigma$-stable splitting $T \cong Z(G) \times T_{\ad}$, so we must now add this hypothesis to Assumption \ref{Assump1}. The splitting then descends to an isomorphism $\mathbb{T} \cong Z(\mathbb{G}) \times \mathbb{T}_{\ad}$ over $\mathbb{F}$.  We will show how to arrange the existence of such a splitting in the examples below, or one can work with groups of adjoint type.
\end{remark}

The coweight $\omega_i^\vee \in X_*(T_{\ad})$ is necessarily fixed by $\sigma$ since it is uniquely determined by pairings with roots, and $\alpha_i$ is fixed. Since we are assuming we have chosen a $\sigma$-stable splitting $T \cong Z(G) \times T_{\ad}$, then the induced lift $\tilde{\omega}_i^{\vee} \in X_*(T)$ is also fixed by $\sigma$, and hence it is defined over $\mathbb{F}$. This allows us to
descend the isomorphisms in Corollary \ref{cor:splitting-of-center-of-levi-part} and Lemma \ref{GmLevi} to isomorphisms over $\mathbb{F}$ of the form
$$Z(\mathbb{L}) \cong Z(\mathbb{G}) \times Z(\mathbb{L}_{\ad}) = Z(\mathbb{G}) \times \mathbb{G}_m.$$ 

As $\sigma$ preserves $\alpha_{i}$ and permutes our distinguished representatives of simple reflections, it follows that $\dot{w}_0$, $\dot{w}_0^P$, and $\dot{w}_P$ all descend to $\mathbb{G}(\mathbb{F})$. It is then straightforward to check that the geometric crystal $X = U Z(L) \dot{w}_P U \cap B_-$ descends to a scheme $\mathbb{X}$ over $\mathbb{F}$, as do all of the associated maps in Section \ref{sec:Crystal}.
The next result shows that Lemma \ref{lem-quot} is unchanged in the quasi-split case (in fact a more general statement is true for any rational parabolic of a quasi-split group, but we will not need this). 

\begin{lemma}
    We have  $$\frac{\sizeof{\left(\specialWeylElement{P}^{-1} \mathbb{U}(\mathbb{F}) \specialWeylElement{P}\right) \cap \mathbb{U}(\mathbb{F})}}{\sizeof{\mathbb{U}(\mathbb{F})}} = q^{-\dim G/P}.$$
\end{lemma}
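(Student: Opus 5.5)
We would prove this by reducing the count of $\mathbb{F}$-points to a statement about $\sigma$-stable unipotent groups and then using the fact that a connected split-type unipotent group over $\mathbb{F}$ of dimension $m$ has exactly $q^m$ rational points.

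\textbf{Step 1: reinterpret the two groups as $\mathbb{F}$-groups.} Since $\dot{w}_P \in \mathbb{G}(\mathbb{F})$, conjugation by $\dot{w}_P$ commutes with $\sigma \circ \mathrm{Frob}_q$. Therefore $\dot{w}_P^{-1} \mathbb{U} \dot{w}_P \cap \mathbb{U}$ is the $\mathbb{F}$-form, cut out by the equations \eqref{Geq}, of the $\sigma$-stable subgroup
$$U' := \dot{w}_P^{-1} U \dot{w}_P \cap U \subset G.$$
Taking $\mathbb{F}$-points commutes with these intersections, because both sides are defined as fixed points of the same twisted Frobenius inside $G(\mathbb{E})$.

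\textbf{Step 2: describe $U'$.} As in the proof of Lemma \ref{lem-quot}, $U'$ is the product of the root subgroups $U_\alpha$ over the set $R^+_P$ of positive roots in the span of $I$. Hence $U' = U \cap L$ is the maximal unipotent subgroup of the Levi $L$. This identification is intrinsic: $U'$ is defined by $\dot{w}_P$ and $U$, both of which are $\sigma$-stable, so $\sigma$ permutes its root subgroups.

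\textbf{Step 3: count points.} For a connected unipotent group $V \subset U$ that is a $\sigma$-stable product of root subgroups, we claim $|\mathbb{V}(\mathbb{F})| = q^{\dim V}$. We would order the roots by height to obtain a filtration of $V$ by normal $\sigma$-stable subgroups whose successive quotients are vector groups $\bigoplus_{\alpha} U_\alpha$, summed over the roots of a fixed height. The twisted Frobenius acts semilinearly on each graded piece, through the permutation $\sigma$ of roots composed with $\mathrm{Frob}_q$. The resulting $\mathbb{F}$-form is a vector group, since it is a Weil restriction or a twisted form of $\mathbb{G}_a^k$, and Lang's theorem/Hilbert 90 show that its $\mathbb{F}$-points number $q^{k}$. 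Because $H^1$ of connected groups over finite fields vanishes (Lang), point counts are multiplicative along the filtration, so $|\mathbb{V}(\mathbb{F})| = q^{\dim V}$.

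\textbf{Step 4: conclude.} Applying Step 3 to $U$ and to $U'$ gives
$$\frac{|\dot{w}_P^{-1}\mathbb{U}(\mathbb{F})\dot{w}_P \cap \mathbb{U}(\mathbb{F})|}{|\mathbb{U}(\mathbb{F})|} = q^{\dim U' - \dim U} = q^{-\dim U_P} = q^{-\dim G/P}.$$

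\textbf{Main obstacle.} The delicate point is Step 3: we must check that the graded pieces are genuinely $\sigma$-stable vector groups with linear twisted Frobenius actions. The subtlety is that commutators could a priori mix signs. This is handled by grading by height rather than by individual roots: $\sigma$ preserves height, and each quotient is abelian and isomorphic to $\mathrm{Lie}$ of that height. The pinning-compatibility ensures $\sigma$ acts linearly on it, so the Frobenius twist is a form of affine space with $q^k$ points.
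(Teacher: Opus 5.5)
Your proof is correct, but it is organized differently from the paper's.

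\textbf{The paper's route.} Because the twist is not of type $A_{2n}$, no $\alpha+\sigma(\alpha)$ is a root. Each $\sigma$-orbit $\overline{\alpha}$ of roots therefore gives an $\mathbb{F}$-rational root subgroup $\mathbb{U}_{\overline{\alpha}}$ with $\mathbb{U}_{\overline{\alpha}}(\mathbb{F})\cong\mathbb{F}$ or $\mathbb{E}$, i.e.\ with exactly $q^{|\overline{\alpha}|}$ points. The count in Lemma \ref{lem-quot} is then redone orbit by orbit.

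\textbf{Your route.} You never build rational root subgroups. You filter by height and note that each graded piece is an $F$-stable vector group, on which the twisted Frobenius acts by a signed permutation of coordinates composed with $x\mapsto x^q$. This gives $q^k$ fixed points by Galois descent, and Lang's theorem lets you multiply the counts along the filtration. In effect you prove that a smooth connected unipotent group of dimension $m$ over a finite field has $q^m$ rational points. This buys generality: you do not need to exclude the $A_{2n}$ twist, where orbit groups are Heisenberg groups and the sign on $U_{\alpha+\sigma(\alpha)}$ is $-1$; your graded-piece count still gives $q$ there. It would also cover any rational parabolic. The cost is invoking Lang's theorem where the paper gets by with an explicit description of the rational root groups.

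\textbf{A small inaccuracy in Step 2, harmless for the count.} In general the roots of $U'=\dot{w}_P^{-1}U\dot{w}_P\cap U$ are $w_P^{-1}(R^+_I)=w_0(R^-_I)$. These are the positive roots in the span of $-w_0(I)$, not of $I$. For example, for $\GL_3$ and $\alpha_1$ one finds $U'=U_{\alpha_1}$, while $U\cap L=U_{\alpha_2}$. In the three quasi-split cases here, $-w_0$ happens to fix $\alpha_i$, so $U'=U\cap L$ does hold. Either way, you only use $\dim U'=|R^+_I|$ and the $\sigma$-stability of $U'$, and you justify both independently of this identification.
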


\begin{proof}
    It is well-known (and straightforward to check) that the only automorphisms of Dynkin diagrams which give rise to non-reduced root systems occur in type $A_n$ where $n$ is even. Since we are not in this case, there are no roots $\alpha$ such that $\alpha+\sigma(\alpha)$ is also a root. Thus, if $\alpha \neq \sigma(\alpha)$ then $U_{\alpha} U_{\sigma(\alpha)}$ is a subgroup of $G$ isomorphic to $\mathbb{G}_a^2$. It follows that for every orbit $\overline{\alpha} = \{\alpha, \sigma(\alpha)\}$ of $\sigma$ on the roots of $G$, there is a corresponding root subgroup $\mathbb{U}_{\overline{\alpha}} \subset \mathbb{G}$ defined over $\mathbb{F}$ such that 
$$ \mathbb{U}_{\overline{\alpha}}(\mathbb{F}) \cong \begin{cases}
    \mathbb{F}, & \text{if } \sigma(\alpha)=\alpha \\
    \mathbb{E}, & \text{if } \sigma(\alpha) \neq \alpha.
\end{cases}$$
Revisiting the proof of Lemma \ref{lem-quot} now shows that the sizes of the groups in the statement of the lemma are the same as in the split case.
\end{proof}

The datum of $\psi \colon \mathbb{F}_p \rightarrow \mathbb{C}^{\times}$ is the same as for split groups, but now we take $\chi$ a character of $\mathbb{T}(\mathbb{F})$, along with its associated character sheaf $\K_{\chi}$ on $\mathbb{T}$.
We consider the induced representation $\Ind{\mathbb{B}_-(\mathbb{F})}{\mathbb{G}(\mathbb{F})}{\chi}$ as in Section \ref{sec:generic-representations-and-bessel-functions}, which by the same reasoning still contains a unique irreducible subrepresentation $\tau$ admitting a $\psi$-Whittaker vector. We may repeat Definition \ref{defWC} in this setup, giving us the weighted character sheaf $\WC_{\mathbb{G}}^{\mathbb{P}}(\phi, \chi)$ on $Z(\mathbb{L})$.
All of the results of Sections \ref{sec:generic-representations-and-bessel-functions} and \ref{sec-WC} go through unchanged. Thus, for any $z \in Z(\mathbb{L})(\mathbb{F})$, we have
    \begin{equation} \label{quasiTrace} \textnormal{Tr}_{{\WC_{\mathbb{G}}^{\mathbb{P}}}(\phi, \chi)}(z) = (-1)^{\dim G/P}q^{\tfrac{\dim G/P}{2}} \besselFunction_{\tau, \fieldCharacter}\left(z \specialWeylElement{P}\right).\end{equation}

It is straightforward to check that the affine Grassmannian $\Gr = \mathbb{G}(\mathbb{F}(\!(\tau)\!))/ \mathbb{G}(\mathbb{F}([\![\tau]\!])$ and $\Bun$ descend to $\mathbb{F}$, as do the Hecke stack $\Hk$ and all the associated maps from Section \ref{sec:K}. (In particular, we use that the sum of the trivializations of the simple roots,  the trivialization of the lowest root $-\theta$, and $\dot{\kappa}$ are all fixed by $\sigma$). If $\mu \in X_*(T)^+$ is fixed by $\sigma$ then $\Gr_{\mu}$ is defined over $\mathbb{F}$, so that 
the diagram \eqref{Hkdiagram} also is defined over $\mathbb{F}$. We may thus repeat Definition \ref{KDef}, giving us the Kloosterman sheaf $\text{Kl}_{\hat{\mathbb{G}}}^{\mu}(\phi, \chi)$ on $\mathbb{G}_m$ for any $\mu \in (X_*(T)^+)^\sigma$. (In comparison to \cite[\S 2.3]{HNY}, here we do not consider Kloosterman sheaves for more general representations of $^LG$.) The following corollary summarizes our results in the quasi-split case.

\begin{corollary} \label{QSplitCor}
Let $G$ be a pinned, split reductive group whose derived subgroup is almost simple and such that $Z(G)$ is a torus. Let $\sigma$ be a pinning-preserving automorphism which fixes the adjoint minuscule fundamental coweight $\omega_i^{\vee}$ associated to $\alpha_i$. Let $\mathbb{G} / \mathbb{F}$ be the quasi-split form of $G$ with respect to $\sigma$, and further suppose there exists a $\sigma$-equivariant isomorphism $T \cong Z(G) \times T_{\ad}$. Then for any nontrivial $\psi \colon \mathbb{F}_p \rightarrow \Ql^\times$ and any $\chi \colon \mathbb{T}(\mathbb{F}) \rightarrow \Ql^\times$, there is an isomorphism of local systems
    $$\WC_{\mathbb{G}}^{\mathbb{P}}(\phi, \chi)  \cong \K_{\chi}^{Z(\mathbb{G})} \boxtimes \alpha_i^*\textnormal{Kl}_{\hat{\mathbb{G}}}^{\tilde{\omega}_i^\vee}(\phi, \chi^{-1})$$
on $Z(\mathbb{L}) \cong Z(\mathbb{G}) \times Z(\mathbb{L}_{\ad})$.
Additionally, taking the trace of Frobenius for any $z = z_0 z_1 \in Z(\mathbb{L})(\mathbb{F})$ gives an equality $$\textnormal{Kl}_{\hat{\mathbb{G}}}^{\tilde{\omega}_i^\vee}(\phi, \chi^{-1}; \alpha_i(z))  = \overline{\chi(z_0)} (-1)^{\dim G/P} q^{\tfrac{\dim G/P}{2}} \besselFunction_{\tau, \fieldCharacter}\left(z \specialWeylElement{P}\right).$$
\end{corollary}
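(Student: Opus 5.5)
The plan is Galois descent: every object and isomorphism in the split case is built from $\sigma$-fixed data, so the split proof runs over $\overline{\mathbb{F}}$ equivariantly for the twisted Frobenius $\sigma \circ \mathrm{Frob}_q$. Concretely, the base change to $\overline{\mathbb{F}}$ of $\mathbb{X}$, of the maps $\pi,\gamma,f$, of $\Hk^\circ_{\tilde{\omega}_i^\vee}$ and of $(f_T,f_+,f_0,\pr_2)$ are the split objects of Sections \ref{sec:Crystal} and \ref{sec:K}. Their $\mathbb{F}$-structures are given by the descent datum $\sigma\circ \mathrm{Frob}_q$.

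First I will check that the isomorphism $\tilde{\iota}\colon X_{\ad}\to \Hk^\circ_{\omega_i^\vee}$ of \eqref{XadIso} commutes with $\sigma$. The map is $x=u_1t\dot{w}_Pu_2\mapsto(\dot{\kappa}_{\ad}^{-1}t^{-1}u_1t\dot{\kappa}_{\ad},\alpha_i(t))$. The decomposition of \cite[Lemma 6.2]{LamTemplier2024} is canonical, hence $\sigma$-equivariant, because $\sigma$ preserves $U_{P}$, $Z(L_{\ad})$, $B_-$ and $\dot{w}_P$. Moreover $\dot{\kappa}_{\ad}$ is $\sigma$-fixed, and $\alpha_i\circ\sigma=\alpha_i$. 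So $\tilde{\iota}$ is $\sigma$-equivariant and descends to an isomorphism $\mathbb{X}_{\ad}\cong\Hk^\circ_{\omega_i^\vee}$ over $\mathbb{F}$.

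Next I will verify that the three identities of Proposition \ref{XadMaps} hold over $\mathbb{F}$. Since these are equalities of morphisms, it suffices to check them after base change to $\overline{\mathbb{F}}$, where they are the split statements. The only input needed is that all the ingredients are $\sigma$-stable:
\begin{enumerate}
\item the lift $\gamma^T_{\ad}$ and $\tilde{\omega}_i^\vee(-1)$, by the $\sigma$-stable splitting $T\cong Z(G)\times T_{\ad}$;
\item $\phi$, because $\sigma$ permutes the simple root trivializations;
\item $u_{-\theta}$, by the remark that $\sigma$ preserves the trivializations of $\pm\theta$.
\end{enumerate}
With these descended identifications, the argument of Theorem \ref{KLtoWC} applies verbatim over $\mathbb{F}$. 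Since $\overline{\Gr}_{\tilde{\omega}_i^\vee}\cong G/P$ is smooth, it descends to the smooth $\mathbb{F}$-form $\mathbb{G}/\mathbb{P}$, so the IC sheaf is still $\Ql(\tfrac d2)[d]$ with $d=\dim G/P$. The character-sheaf identities $\mathrm{add}^*\mathrm{AS}_\psi\cong \mathrm{AS}_\psi\boxtimes\mathrm{AS}_\psi$ and $\mathrm{mult}^*\K_\chi\cong\K_\chi\boxtimes\K_\chi$ hold for the commutative $\mathbb{F}$-groups $\mathbb{G}_a$ and $\mathbb{T}$. Together these give
\[
R\pi_{\ad,!}\big((\gamma^T_{\ad})^*\K_\chi\otimes f_{\ad}^*\A\big)(\tfrac d2)[d]\cong \mathrm{Kl}^{\tilde{\omega}_i^\vee}_{\hat{\mathbb{G}}}(\phi,\chi^{-1}).
\]
The Künneth step of Lemma \ref{lem-WCad} also holds over $\mathbb{F}$, because $\mathbb{X}\cong Z(\mathbb{G})\times\mathbb{X}_{\ad}$ is defined over $\mathbb{F}$. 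This proves the sheaf isomorphism. It is an isomorphism of local systems, because after base change to $\overline{\mathbb{F}}$ it becomes the split statement, which is a local system by \cite{HNY} (Remark \ref{LSpecF}).

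The trace formula then follows by taking Frobenius traces at $z=z_0z_1$ and combining with \eqref{quasiTrace}. The stalk of $\K_\chi^{Z(\mathbb{G})}$ at $z_0$ contributes $\chi(z_0)$, and dividing by it gives the factor $\overline{\chi(z_0)}$, since $\chi$ is unitary.

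I expect the main obstacle to be the first descent check, namely the $\sigma$-equivariance of $\tilde{\iota}$ and of the formula for $f_0$. These depend on the sign convention of Assumption \ref{Assump1}, $\dot{w}_P^{-1}u_{\alpha_i}(x)\dot{w}_P=u_{-\theta}(-x)$, being compatible with $\sigma$. I would verify this directly by applying $\sigma$ to that relation, using that $\sigma$ fixes $u_{\alpha_i}$ (as $\alpha_i$ is $\sigma$-fixed) and $\dot{w}_P$. It follows that $\sigma\circ u_{-\theta}=u_{-\theta}$, which is consistent with the cited fact about $\pm\theta$.
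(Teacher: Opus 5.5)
Your proposal is correct and follows the paper's own argument. The paper proves the corollary by combining \eqref{quasiTrace} with the observation that every step in the proof of Theorem \ref{KLtoWC} is $\sigma$-equivariant given the $\sigma$-stable splitting: the identification $\tilde{\iota}$, the formulas of Proposition \ref{XadMaps}, and the K\"unneth step of Lemma \ref{lem-WCad}. You make those equivariance checks explicit, and your derivation of $\sigma \circ u_{-\theta} = u_{-\theta}$ directly from the normalization in Assumption \ref{Assump1} is a tidy, self-contained substitute for the paper's appeal to a Lie-bracket argument.
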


\begin{proof}
    This follows from \eqref{quasiTrace} and by checking that all of the steps in the proof of Theorem \ref{KLtoWC} are $\sigma$-equivariant. 
\end{proof}

The condition that there exists a $\sigma$-equivariant isomorphism $T \cong Z(G) \times T_{\ad}$ is not automatic, e.g., we will show in Section \ref{ref:QsplitA} that it fails for $\GL_n$ with the standard pinning and $T_{\ad}$ given by diagonal matrices with last entry $1$. If one has a $\sigma$-equivariant embedding $(G, T) \rightarrow (G', T')$ as in Lemma \ref{lemm-G'} then Remark \ref{notTorus} applies; this can be useful even if $Z(G)$ is a torus to arrange so that there is a $\sigma$-equivariant isomorphism $T' \cong Z(G') \times T_{\ad}$.

\section{Examples} \label{Sec:ExamplesSplit} 
In this section we explicitly determine the sets $Z(L) \dot{w}_P$ to which Corollaries \ref{maincor} and \ref{QSplitCor} apply, for all possible groups up to modifying the center with a few exceptions in characteristic 2. To generate the results in this section, we first used the computer algebra package PyCox \cite{PyCox} to compute reduced expressions for $w_0, w^P$, and $w_P$ for each type of Lie algebra and some small $n$. Then we used GAP3/Chevie \cite{GH96} to compute representatives for the simple reflections and multiply them. In all cases there appeared an obvious candidate for the matrices as a function of $n$. We then rigorously checked the pattern holds, with some suggestions from Gemini, Claude and ChatGPT. We generally omit these straightforward yet tedious arguments. In our examples, we first choose the associated homomorphism from $\SL_2$, and then our representatives are obtained by taking the images of the $2 \times 2$ matrix as in \eqref{simpleref}, in accordance with \cite{LamTemplier2024}. The interested reader can explore the $\SL_2$ embeddings and the long Weyl elements corresponding to different parabolic subgroups using the article's interactive companion programs, available at \cite{CassGuZelingher2026GitHub}.

Tables \ref{tab:minuscule_coweights_AC} and \ref{tab:minuscule_coweights_D} below give the elements $Z(L) \dot{w}_P$ in types A--D, and more details can be found in subsequent subsections; Type E is handled separately. After Type $A$ our explanations will be briefer in order to save space.
To read the tables below, one needs the following notation: $I_n$ is the $n \times n$ identity matrix, $J_n$ is the anti-diagonal $n \times n$ identity matrix, and \begin{equation} \label{Andef}
A_n = \text{diag}(1, -1, 1, \ldots, (-1)^{n-1}). \end{equation}
Additionally, the elements $c,t \in \mathbb{G}_m(\mathbb{F}) = \mathbb{F}^{\times}$ are arbitrary.

\begin{table}[htbp]
\centering
\caption{Special elements in Types A--C.}
\label{tab:minuscule_coweights_AC}
\renewcommand{\arraystretch}{1.5}
\begin{tabular}{|l|l|l|l|}
\hline
\textbf{Type} & \begin{tabular}[c]{@{}l@{}}\textbf{Minuscule} \\ \textbf{Coweight}\end{tabular} & $\boldsymbol{\dim G/P}$ & \textbf{Elements of} $\boldsymbol{Z(L) \dot{w}_P}$ \\ \hline
$A_{n-1}$     & $\omega_i^\vee$    & $i(n - i)$                & $\begin{pmatrix} 0 &  (-1)^{n-i} ct I_{i} \\  c I_{n-i} & 0 \end{pmatrix}$ \\ \hline
$B_n$         & $\omega_1^\vee$    & $2n-1$                    & $\begin{pmatrix} 0 & 0 & (-1)^n t \\ 0 & -I_{2n-1} & 0 \\ (-1)^n t^{-1} & 0 & 0 \end{pmatrix}$ \\ \hline
$C_n$         & $\omega_n^\vee$    & $\frac{n(n+1)}{2}$        & $\begin{pmatrix} 0 & (-1)^n ct A_n \\ cA_n & 0 \end{pmatrix}$ \\ \hline
\end{tabular}
\end{table}

\begin{table}[htbp]
\centering
\caption{Special elements in Type D.}
\label{tab:minuscule_coweights_D}
\renewcommand{\arraystretch}{1.5}
\begin{tabular}{|l|l|l|l|}
\hline
\textbf{Type} & \begin{tabular}[c]{@{}l@{}}\textbf{Minuscule} \\ \textbf{Coweight}\end{tabular} & $\boldsymbol{\dim G/P}$ & \textbf{Elements of} $\boldsymbol{Z(L) \dot{w}_P}$ \\ \hline
$D_n$         & $\omega_1^\vee$    & $2n-2$                    & $
 \begin{pmatrix}
    0 & 0 & 0 & 0 & (-1)^{n-1}ct^2 \\
    0 & -ctI_{n-2} & 0 & 0 & 0 \\
    0 & 0 & -ctJ_2 & 0 & 0 \\
    0 & 0 & 0 & -ctI_{n-2} & 0 \\
    (-1)^{n-1}c & 0 & 0 & 0 & 0
 \end{pmatrix}
$ \\ \hline
              & $\omega_{n-1}^\vee$ & $\frac{n(n-1)}{2}$       & $
\begin{gathered}
  \begin{pmatrix} 0 & 0 & ctA_{n-1} & 0  \\ c & 0 & 0 & 0  \\ 0 & 0 & 0 & ct  \\ 0 & -cA_{n-1} & 0 & 0 \end{pmatrix} \quad \text{if } n \text{ odd}  \\[1ex]
  \begin{pmatrix} 0 & 0 & -ct & 0 & 0 & 0 \\ 0 & 0& 0 & 0 & ctA_{n-2} & 0 \\ c & 0 & 0 & 0 & 0 & 0 \\ 0 & 0 & 0 & 0 & 0 & ct \\ 0 & -cA_{n-2} & 0 & 0 & 0 & 0 \\ 0 & 0 & 0 & -c & 0 & 0 \end{pmatrix} \quad \text{if } n \text{ even}
\end{gathered}
$ \\ \hline
              & $\omega_{n}^\vee$   & $\frac{n(n-1)}{2}$       & $
\begin{gathered}
  \begin{pmatrix}  0 & ct & 0 & 0  \\  0 & 0 & 0 & -ctA_{n-1} \\  cA_{n-1} & 0 & 0 & 0 \\ 0  & 0 & c & 0 \\  \end{pmatrix} \quad \text{if } n \text{ odd}  \\[1ex]
  \begin{pmatrix} 0 & -ctA_n \\ cA_n & 0 \end{pmatrix} \quad \text{if } n \text{ even} 
\end{gathered}
$ \\ \hline
\end{tabular}
\end{table}

\subsection{Type A} \label{sec-A} We start with $G = \GL_{n}$ for $n \geq 2$. We choose the standard upper triangular Borel $B$ and diagonal torus $T$. We choose the splitting $T \cong Z(G) \times T_{\ad}$ where $T_{\ad}$ consists of matrices of the form $\diag(t_1, \ldots, t_{n-1}, 1)$ for $t_i \in \mathbb{G}_m$. We denote the simple roots by $\alpha_i = \varepsilon_i - \varepsilon_{i+1}$ for $1 \leq i \leq n-1$. There is a canonical homomorphism $\varphi_{\alpha_i} \colon \SL_2 \rightarrow \GL_n$ which inserts the $2 \times 2$ matrix in rows and columns $i$ to $i+1$.
For $\alpha_i \in \Delta$ with simple reflection $s_i$, we choose \begin{equation} \label{simpleref} \dot{s}_i = \varphi_{\alpha_i} \begin{pmatrix} 0 & -1 \\ 1 & 0 \end{pmatrix}.\end{equation}
All of the fundamental coweights $\omega_i^\vee =\varepsilon_1^\vee + \cdots + \varepsilon_i^\vee$, $1 \leq i \leq n-1$, are minuscule, and their lifts from $\text{PGL}_n$ to $\GL_n$ induced by the above splitting are $$\omega_i^\vee(t) = \begin{pmatrix}
    t I_i & 0 \\ 0 & I_{n-i} \end{pmatrix} \quad 1 \leq i \leq n-1.$$ 
Let us fix $\omega_i^{\vee}$. One can check by induction on $n$ that
\begin{equation} \label{eqJ} \dot{w}_0 = \tilde{J}_n := \begin{pmatrix}
0 & \dots & 0 & (-1)^{n-1} \\
\vdots & \ddots & (-1)^{n-2} & 0 \\
0 & -1 & \ddots & \vdots \\
1 & 0 & \dots & 0
\end{pmatrix}, \quad \dot{w}_0^P = \begin{pmatrix} \tilde{J}_i & 0 \\ 0 & \tilde{J}_{n-i} \end{pmatrix}.\end{equation}
Then
$$\dot{w}_P = (\dot{w}_0^P)^{-1} \dot{w}_0 = \begin{pmatrix} 0 & (-1)^{n-i} I_{i} \\ I_{n-i} & 0 \end{pmatrix}.$$ An arbitrary element $c I_n \cdot \omega_i^\vee(t) \cdot \dot{w}_P$ of $Z(L)\dot{w}_P$ is then given as in Table \ref{tab:minuscule_coweights_AC}.

If $G = \text{PGL}_n$, the only difference is that the center is trivial, and the elements of $Z(L)\dot{w}_P$ are obtained as in Table \ref{tab:minuscule_coweights_AC}, taking $c=1$ and $t \in \mathbb{G}_m$. If $G = \text{SL}_n$, we may take $\text{SL}_n \rightarrow \GL_n$ for the embedding as in Lemma \ref{lemm-G'}. Then $Z(L)\dot{w}_P$ consists of those matrices as in Table \ref{tab:minuscule_coweights_AC} having determinant $1$, which amounts to requiring $c^nt^i = 1$. 

\subsection{Type B} Let $n \geq 2$. The group $G = \Sodd$ is of adjoint type. For convenience we avoid characteristic $2$ and work over $\mathbb{Z}[\frac{1}{2}]$, so that we may define $\Sodd \subset \text{SL}_{2n+1}$ consisting of those $g$ such that $g^TJ'g = J'$, where $J'$ is the anti-diagonal identity matrix modified with $1/2$ in the center, i.e. $$ J' = 
\text{anti-diag}(1, \ldots, 1, 1/2, 1, \ldots, 1).$$
The use of $J'$ instead of $J$ ensures that our representatives of simple reflections have entries in $\{-1, 0, 1\}$. We choose $B$ consisting of upper triangular matrices, and then $T$ consists of diagonal matrices of the form $\diag(t_1, \ldots, t_n, 1, t_{n}^{-1}, \ldots, t_1^{-1})$.
The simple roots are $\varepsilon_i - \varepsilon_{i+1}$ for $ 1 \leq i \leq n-1$ and $\varepsilon_n$. The standard homomorphisms $\SL_2 \rightarrow \Sodd$ for the first $n-1$ simple roots are given by mapping to two $2 \times 2$ blocks, e.g.~for $n=2$ and $i=1$ we have
$$\phi_{\varepsilon_1 - \varepsilon_{2}}\left( \begin{pmatrix} a & b \\ c & d \end{pmatrix} \right) = \begin{pmatrix} a & b & 0 & 0 & 0 \\ c & d & 0 & 0 & 0 \\ 0 & 0 & 1 & 0 & 0 \\ 0 & 0 & 0 & a & -b \\ 0 & 0 & 0 & -c & d \end{pmatrix}.$$
The homomorphism for $\varepsilon_n$ maps to the following $3 \times 3$ matrix in indices $n$ to $n+2$:
$$\begin{pmatrix} a & b \\ c & d \end{pmatrix} \mapsto \begin{pmatrix} a^2 & ab & -b^2 \\ 2ac & ad+bc & -2bd \\ -c^2 & -cd & d^2 \end{pmatrix}.$$
The unique minuscule fundamental coweight $\omega_1^{\vee} = \varepsilon_1^\vee$ is given by $$\omega_1^{\vee}(t) = \begin{pmatrix} t & 0 & 0 \\ 0 & I_{2n-1} & 0 \\ 0 & 0 & t^{-1} \end{pmatrix}.$$

\subsection{Type C} Let $n \geq 2$, and 
let $\Spn \subset \SL_{2n}$ be the subgroup consisting of those $g$ such that $$g^T \Omega g = \Omega, \quad \text{ where } \:\Omega = \begin{pmatrix}
    0 & J_n \\ -J_n & 0
\end{pmatrix}.$$ Here $J_n$ is the anti-diagonal $n \times n$ identity matrix. 
The center of $\Spn$ is $\mu_2$, so we consider the group $G = \GSpn \subset \GL_{2n}$ consisting of those $g$ such that $g^T \Omega g = c \Omega$ for some $c \in \mathbb{G}_m$. We let $B \subset \GSpn$ be upper triangular matrices, and then $T$ consists of diagonal matrices of the form $\diag(t_1, \ldots, t_n, ct_{n}^{-1}, \ldots, ct_1^{-1})$ for $t_i, c \in \mathbb{G}_m$. The inclusion of the maximal torus of $\Spn$ into $T$ splits by sending $\diag(t_1, \ldots, t_n, ct_{n}^{-1}, \ldots, ct_1^{-1})$ to $\diag(t_1, \ldots, t_n, t_{n}^{-1}, \ldots, t_1^{-1})$.
The simple roots of $\Spn$ are $\varepsilon_i - \varepsilon_{i+1}$ for $ 1 \leq i \leq n-1$ and $2\varepsilon_n$. The standard homomorphisms $\SL_2 \rightarrow \Spn$ for the first $n-1$ simple roots are given by mapping to two $2 \times 2$ blocks, and the homomorphism for $2\varepsilon_n$ is obtained by mapping to a single copy of $\SL_2$ located in the center. For example, when $n=2$ these homomorphisms are
\begin{equation} \label{Choms} \phi_{\varepsilon_1 - \varepsilon_{2}}\left( \begin{pmatrix} a & b \\ c & d \end{pmatrix} \right) = \begin{pmatrix} a & b & 0 & 0 \\ c & d & 0 & 0 \\ 0 & 0 & a & -b \\ 0 & 0 & -c & d \end{pmatrix}, \quad \phi_{2\varepsilon_{2}}\left( \begin{pmatrix} a & b \\ c & d \end{pmatrix} \right) = \begin{pmatrix} 1 & 0 & 0 & 0 \\ 0 & a & b & 0 \\ 0 & c & d & 0 \\ 0 & 0 & 0 & 1 \end{pmatrix}.\end{equation}
We choose the splitting $T \cong Z(G) \times T_{\ad}$ uniquely determined by the requirement that all matrices in $T_{\ad}$ have a $1$ in the bottom right entry. 
The lift to $G$ of the unique minuscule fundamental coweight $\omega_n^\vee = \frac{1}{2}(\varepsilon_1^\vee + \cdots + \varepsilon_n^\vee)$ is $$\omega_n^{\vee}(t) = \begin{pmatrix} t I_n & 0 \\ 0 & I_n \end{pmatrix}.$$

\subsection{Type D} \label{sec:D} We work over $\mathbb{Z}[\frac{1}{2}]$. Let $n \geq 4$, and let
$\SO_{2n} \subset \SL_{2n}$ be the subgroup consisting of those $g$ such that $g^TJg = J$ where $J=J_{2n}$ is the anti-diagonal identity matrix. As with $\Sp_{2n}$, the center is $\mu_2$ so we let $\mathrm{GO}_{2n}\subset \GL_{2n}$ be the subgroup preserving $J$ up to a scaling factor. The group $\mathrm{GO}_{2n}$ is disconnected, so we let $G = \GSO_{2n}$ be the connected component of the identity. We take the Borel to be the upper triangular matrices, and then $T$ consists of diagonal matrices of the form $\diag(t_1, \ldots, t_n, ct_{n}^{-1}, \ldots, ct_1^{-1})$ for $t_i, c \in \mathbb{G}_m$. We again choose the splitting $T \cong Z(G) \times T_{\ad}$ such that all matrices in $T_{\ad}$ have a $1$ in the bottom right entry.
The simple roots of $\SO_{2n}$ are $\varepsilon_i - \varepsilon_{i+1}$ for $ 1 \leq i \leq n-1$ and $\varepsilon_{n-1} + \varepsilon_{n}$. The fundamental coweights are $\omega_i^\vee = \varepsilon_1^{\vee} + \cdots + \varepsilon_i^{\vee}$ for $1 \leq i \leq n-2$, $\omega_{n-1}^\vee =  \frac{1}{2}(\varepsilon_1^{\vee}+\cdots + \varepsilon_{n-1}^{\vee} -\varepsilon_{n}^{\vee})$, and $\omega_n^\vee = \frac{1}{2}(\varepsilon_1^{\vee}+\cdots +\varepsilon_{n-1}^{\vee} +\varepsilon_{n}^{\vee})$. The minuscule coweights are $\omega_1^{\vee}$, $\omega_{n-1}^{\vee}$, and $\omega_n^{\vee}$.
For the first $n-1$ simple roots we take the same homomorphisms from $\SL_2$ as for $\Sp_{2n}$, e.g.~as in the left side of \eqref{Choms}. For the last simple root, we use the following $4 \times 4$ matrix from the $n=2$ case, embedded in the center of the identity matrix:
$$\phi_{\varepsilon_{1}+\varepsilon_{2}}\left( \begin{pmatrix} a & b \\ c & d \end{pmatrix} \right) = \begin{pmatrix} a & 0 & b & 0 \\ 0 & a & 0 & -b \\ c & 0 & d & 0 \\ 0 & -c & 0 & d \end{pmatrix}.$$
We have \begin{equation} \label{omega1eq} \omega_1^{\vee}(t) = \begin{pmatrix}
    t^2 & 0 & 0 \\ 0 & tI_{2n-2} & 0 \\ 0 & 0 & 1
\end{pmatrix}, \quad \omega_{n-1}^{\vee}(t) = \begin{pmatrix}
    tI_{n-1} & 0 & 0& 0\\
    0 & 1 & 0 & 0 \\
    0 & 0 & t & 0 \\
    0 & 0 & 0 & I_{n-1}
\end{pmatrix}, \quad \omega_n^{\vee}(t) = \begin{pmatrix}
    t I_n & 0 \\ 0 & I_n
\end{pmatrix}.
\end{equation}
A general element of $Z(L)\dot{w}_P$ is then given as in Table \ref{tab:minuscule_coweights_D}.

\subsection{Type E} \label{sec:E} We use the following Cartan matrix for $\Esev$ (resp.~the upper left $6 \times 6$ minor for $\Esix$).
$$\begin{pmatrix}
2 & 0 & -1 & 0 & 0 & 0 &0 \\
0 & 2 & 0 & -1 & 0 & 0 & 0 \\
-1 & 0 & 2 & -1 & 0 & 0 & 0\\
0 & -1 & -1 & 2 & -1 & 0 & 0\\
0 & 0 & 0 & -1 & 2 & -1 & 0\\
0 & 0 & 0 & 0 & -1 & 2 & -1 \\
0 & 0 & 0 & 0 & 0 & -1 & 2
\end{pmatrix}.$$
We denote the simple roots, fundamental weights, and fundamental coweights by $\alpha_i$, $\omega_i$, and $\omega_i^{\vee}$ for $i=1, \ldots, 7$ (resp. $i=1, \ldots, 6$). The minuscule fundamental coweight is $\omega_7^\vee$ for $\Esev$ (resp.~$\omega_1^\vee$ and~$\omega_6^\vee$ for $\Esix$). We realize the Chevalley group $\mathrm{E}_7$ (resp.~$\mathrm{E}_6$) as a subgroup of $\GL_{56}$ (resp.~$\GL_{27}$) via the minuscule representation of highest weight $\omega_7$ (resp.~$\omega_1$) and the canonical basis in \cite[\S4]{Geck17}. These bases consist of weight vectors, so the only choices made in loc.~cit.~are orderings and signs. For the convenience of the reader we record the ordering below. To save space we do not record the associated homomorphisms from $\SL_2$ for the simple roots, but we note that they contain no negative signs and the resulting Borel subgroups are upper triangular. 
\\ \\
\noindent \textbf{$\boldsymbol{\Esix}$}. The standard basis of $\mathbb{F}^{\oplus 27}$ consists of weight vectors for a maximal torus of $\Esix$, ordered in the following way and written as linear combinations of $\omega_1, \ldots, \omega_6$:

\allowdisplaybreaks

\begin{align*}
    &(1, 0, 0, 0, 0, 0), \; (-1, 0, 1, 0, 0, 0), \; (0, 0, -1, 1, 0, 0), \; (0, 1, 0, -1, 1, 0), \; (0, -1, 0, 0, 1, 0), \\
    &(0, 1, 0, 0, -1, 1), \; (0, -1, 0, 1, -1, 1), \; (0, 1, 0, 0, 0, -1), \; (0, 0, 1, -1, 0, 1), \; (0, -1, 0, 1, 0, -1), \\
    &(1, 0, -1, 0, 0, 1), \; (0, 0, 1, -1, 1, -1), \; (-1, 0, 0, 0, 0, 1), \; (1, 0, -1, 0, 1, -1), \; (0, 0, 1, 0, -1, 0), \\
    &(1, 0, -1, 1, -1, 0), \; (-1, 0, 0, 0, 1, -1), \; (1, 1, 0, -1, 0, 0), \; (-1, 0, 0, 1, -1, 0), \; (1, -1, 0, 0, 0, 0), \\
    &(-1, 1, 1, -1, 0, 0), \; (-1, -1, 1, 0, 0, 0), \; (0, 1, -1, 0, 0, 0), \; (0, -1, -1, 1, 0, 0), \; (0, 0, 0, -1, 1, 0), \\
    &(0, 0, 0, 0, -1, 1), \; (0, 0, 0, 0, 0, -1).
\end{align*}

The simply connected group $\Esix$ has center $\mu_3$, so we consider the group $G = \Gsix \subset \GL_{27}$ generated by $E_6$ and scalar matrices. We again choose the splitting $T \cong Z(G) \times T_{\ad}$ such that all matrices in $T_{\ad}$ have a $1$ in the bottom right entry. We omit the homomorphisms from $\SL_2$ to save space. The lifts to $G$ of the minuscule fundamental coweights are
\begin{align*}
\omega_1^\vee(t) &= \diag(t^2,\ \underbrace{t,\dots,t}_{11},\ 1,\ t,\ t,\ t,\ 1,\ t,\ 1,\ t,\ \underbrace{1,\dots,1}_{7}), \\
\omega_6^\vee(t) &= \diag(\underbrace{t^2,\dots,t^2}_{7},\ t,\ t^2,\ t,\ t^2,\ t,\ t^2,\ \underbrace{t,\dots,t}_{13},\ 1).
\end{align*}

In both cases $\dim G /P = 16$, and as an element of $\GL_{27}(\mathbb{F})$ the matrix $\dot{w}_P$ is a signed permutation matrix, where the underlying permutation is a product of the following nine disjoint 3-cycles:
\begin{align*}
\dot{w}_P \text{ for }\omega_1^\vee \: \colon \: & (1\,13\,27)(2\,17\,8)(3\,19\,10)(4\,21\,12)(5\,23\,15)(6\,22\,14)(7\,24\,16)(9\,25\,18)(11\,26\,20),  \\
\dot{w}_P \text{ for }\omega_6^\vee \: \colon \: & (1\,27\,13)(2\,8\,17)(3\,10\,19)(4\,12\,21)(5\,15\,23)(6\,14\,22)(7\,16\,24)(9\,18\,25)(11\,20\,26).
\end{align*}
These two permutations are inverses, which is explained by the fact that $w_0$ swaps $\Delta \setminus \{\alpha_1\}$ and $\Delta \setminus \{\alpha_6\}$. For $\omega_1^\vee$, the signs of $\dot{w}_P$ are $+1$ in columns $\{1,2,3,4,5,6,7,9,11,13,27\}$, and $-1$ otherwise. For $\omega_6^\vee$, the signs of $\dot{w}_P$ are $+1$ in columns $\{1,13,17,19,21,22,23,24,25,26,27\}$, and $-1$ otherwise. An arbitrary element of $Z(L) \dot{w}_P$ is then given by $cI_{27} \cdot \omega_i^\vee(t) \cdot \dot{w}_P$ for $c, t \in \mathbb{F}^\times$, which can easily be made explicit given the above information.
\\ \\
\noindent \textbf{$\boldsymbol{\Esev}$}. The standard basis of $\mathbb{F}^{\oplus 56}$ consists of weight vectors for a maximal torus of $\Esev$, ordered in the following way and written as linear combinations of $\omega_1, \ldots, \omega_7$:

\begin{align*}
    &(0, 0, 0, 0, 0, 0, 1), \; (0, 0, 0, 0, 0, 1, -1), \; (0, 0, 0, 0, 1, -1, 0), \; (0, 0, 0, 1, -1, 0, 0), \\
    &(0, 1, 1, -1, 0, 0, 0), \; (0, -1, 1, 0, 0, 0, 0), \; (1, 1, -1, 0, 0, 0, 0), \; (1, -1, -1, 1, 0, 0, 0), \\
    &(-1, 1, 0, 0, 0, 0, 0), \; (-1, -1, 0, 1, 0, 0, 0), \; (1, 0, 0, -1, 1, 0, 0), \; (-1, 0, 1, -1, 1, 0, 0), \\
    &(1, 0, 0, 0, -1, 1, 0), \; (0, 0, -1, 0, 1, 0, 0), \; (-1, 0, 1, 0, -1, 1, 0), \; (1, 0, 0, 0, 0, -1, 1), \\
    &(0, 0, -1, 1, -1, 1, 0), \; (-1, 0, 1, 0, 0, -1, 1), \; (1, 0, 0, 0, 0, 0, -1), \; (0, 1, 0, -1, 0, 1, 0), \\
    &(0, 0, -1, 1, 0, -1, 1), \; (-1, 0, 1, 0, 0, 0, -1), \; (0, -1, 0, 0, 0, 1, 0), \; (0, 1, 0, -1, 1, -1, 1), \\
    &(0, 0, -1, 1, 0, 0, -1), \; (0, -1, 0, 0, 1, -1, 1), \; (0, 1, 0, 0, -1, 0, 1), \; (0, 1, 0, -1, 1, 0, -1), \\
    &(0, -1, 0, 1, -1, 0, 1), \; (0, -1, 0, 0, 1, 0, -1), \; (0, 1, 0, 0, -1, 1, -1), \; (0, 0, 1, -1, 0, 0, 1), \\
    &(0, -1, 0, 1, -1, 1, -1), \; (0, 1, 0, 0, 0, -1, 0), \; (1, 0, -1, 0, 0, 0, 1), \; (0, 0, 1, -1, 0, 1, -1), \\
    &(0, -1, 0, 1, 0, -1, 0), \; (-1, 0, 0, 0, 0, 0, 1), \; (1, 0, -1, 0, 0, 1, -1), \; (0, 0, 1, -1, 1, -1, 0), \\
    &(-1, 0, 0, 0, 0, 1, -1), \; (1, 0, -1, 0, 1, -1, 0), \; (0, 0, 1, 0, -1, 0, 0), \; (-1, 0, 0, 0, 1, -1, 0), \\
    &(1, 0, -1, 1, -1, 0, 0), \; (-1, 0, 0, 1, -1, 0, 0), \; (1, 1, 0, -1, 0, 0, 0), \; (1, -1, 0, 0, 0, 0, 0), \\
    &(-1, 1, 1, -1, 0, 0, 0), \; (-1, -1, 1, 0, 0, 0, 0), \; (0, 1, -1, 0, 0, 0, 0), \; (0, -1, -1, 1, 0, 0, 0), \\
    &(0, 0, 0, -1, 1, 0, 0), \; (0, 0, 0, 0, -1, 1, 0), \; (0, 0, 0, 0, 0, -1, 1), \; (0, 0, 0, 0, 0, 0, -1).
\end{align*}

The simply connected group $\Esev$ has center $\mu_2$, so we consider the group $G = \Gsev \subset \GL_{56}$ generated by $\Esev$ and scalar matrices. The lift of the minuscule fundamental coweight is 
$$
\omega_7^\vee(t) = \text{diag}\Big( t^3, \underbrace{t^2, \dots, t^2}_{17}, t, t^2, t^2, t, t^2, t^2, t, t^2, t^2, t, t^2, t, t, t^2, t, t, t^2, t, t, t^2, \underbrace{t, \dots, t}_{17}, 1).
$$

We have $\dim G/P = 27$, and as an element of $\GL_{56}(\mathbb{F})$ the matrix $\dot{w}_P$ is a signed permutation matrix, where the underlying permutation is a product of the following 28 disjoint 2-cycles:
$$\begin{aligned} \dot{w}_P \text{ for }\omega_7^\vee \: \colon \:
&(1\ 56)(2\ 19)(3\ 22)(4\ 25)(5\ 28)(6\ 30)(7\ 31)(8\ 33)(9\ 34)(10\ 37) \\
&(11\ 36)(12\ 40)(13\ 39)(14\ 43)(15\ 42)(16\ 41)(17\ 45)(18\ 44)(20\ 47) \\
&(21\ 46)(23\ 48)(24\ 49)(26\ 50)(27\ 51)(29\ 52)(32\ 53)(35\ 54)(38\ 55).
\end{aligned}$$
The signs are $-1$ in columns
$$\{2, \ldots,18 \} \cup \{20,21,23,24,26,27,29,32,35,38,56\},
$$
and $+1$ otherwise. An arbitrary element of $Z(L) \dot{w}_P$ is then given by $cI_{56} \cdot \omega_7^\vee(t) \cdot \dot{w}_P$ for $c, t \in \mathbb{F}^\times$, which can easily be made explicit given the above information.

\subsection{Quasi-split Type A} \label{ref:QsplitA}
Let $n$ be even, and consider the group $G = \GL_n \times \mathbb{G}_m$ (so we are in type $A_{n-1}$). We fix the pinning of this group corresponding to the standard pinning of $\GL_n$ as in Section \ref{sec-A}, with Borel $B \times \mathbb{G}_m$. Let $\tilde{J} = \dot{w}_0$ be the $n \times n$ alternating anti-diagonal identity matrix as in \eqref{eqJ}. Note that since $n$ is even, $\tilde{J}^{-1} = \tilde{J}^T = -\tilde{J}$. Consider the involution of $\GL_n \times \mathbb{G}_m$ given by $\sigma(g,  \lambda) = (\lambda \tilde{J} (g^{-1})^T \tilde{J}^{-1}, \lambda)$. Since $\tilde{J} = \dot{w}_0$, the involution $\sigma$ preserves the pinning of $B \times \mathbb{G}_m$ (the alternating signs in $\tilde{J}$ are needed so that the trivializations of the simple root groups are preserved). 

Restricted to the maximal torus $T \times \mathbb{G}_m$, we have
\begin{equation} \label{eq-GUT} \sigma( \diag(t_1, t_2, \ldots, t_n), \lambda) = (\diag(\lambda t_n^{-1}, \ldots, \lambda t_2^{-1}, \lambda t_1^{-1}), \lambda). \end{equation}
It is straightforward to check that $\sigma$ realizes the unique nontrivial automorphism of the Dynkin diagram of type $A_{n-1}$. The group $\mathbb{G}$ defined by \eqref{Geq} is the general unitary group $\GU$, and its defining equations can also be written as
\begin{equation} \label{GUeq} \GU(R) = \{ (g, \lambda) \in (\GL_n \times \mathbb{G}_m)(\mathbb{E} \otimes_{\mathbb{F}} R) \: \mid \: \overline{g}\tilde{J} g^T = \  \overline{\lambda} \tilde{J}, \: \overline{\lambda} = \lambda\}.\end{equation}
Then $\GU(\mathbb{F})$ consists of $g \in \GL_n(\mathbb{E})$ such that $\overline{g} \tilde{J} g^T = \  \lambda \tilde{J}$ for some $\lambda \in \mathbb{F}^{\times}$.

Note that $Z(\GL_n \times \mathbb{G}_m) = Z(\GL_n) \times \mathbb{G}_m$ consists of elements of the form $\diag(t, t, \ldots, t, \lambda)$. 
In Section \ref{sec-A} we chose the isomorphism $T \cong Z(\GL_n) \times T_{\ad}$ where $T_{\ad} \subset \GL_n$ consists of diagonal matrices with last entry equal to $1$. Similarly, we define a splitting of the adjoint quotient $T \times \mathbb{G}_m \rightarrow (T \times \mathbb{G}_m)_{\ad}$ by viewing 
$(T \times \mathbb{G}_m)_{\ad} \subset T \times \mathbb{G}_m$ as elements of the form $\diag(t_1, t_2, \ldots, t_{n-1}, 1, t_1)$. By \eqref{eq-GUT}, $(T \times \mathbb{G}_m)_{\ad}$ is preserved by $\sigma$, so by using the same equations as in \eqref{GUeq} we may define an isomorphism $\mathbb{T} \cong Z(\GU) \times \mathbb{T}_{\ad}$ over $\mathbb{F}$ (without the extra $\mathbb{G}_m$ factor, such an isomorphism would not exist). We have
\begin{align*}
    \mathbb{T}(\mathbb{F}) &= \{ (\diag(t_1, \ldots, t_{n/2}, \lambda t_{n/2}^{-q}, \ldots, \lambda t_{1}^{-q}), \lambda) \: \mid \: \lambda \in \mathbb{F}^\times, \: t_1, \ldots, t_{n/2} \in \mathbb{E}^\times \} \\ & \cong  (\mathbb{E}^{\times})^{n/2} \times \mathbb{F}^{\times}. \end{align*}
With respect to the isomorphism $\mathbb{T} \cong Z(\GU) \times \mathbb{T}_{\ad}$, the subgroup $\mathbb{T}_{\ad}(\mathbb{F})$ is given by setting $t_1 = \lambda \in \mathbb{F}^\times$ above, and $Z(\GU)(\mathbb{F})$ is given by setting $t_i = t \in \mathbb{E}^{\times}$ for all $i$, where $t^{q+1} = \lambda$.

Let $P \times \mathbb{G}_m = (L \times \mathbb{G}_m) (U_P \times \{1\}) \subset \GL_n \times \mathbb{G}_m$ be the parabolic defined by removing the simple root $\alpha_{n/2}$. These descend to groups $\mathbb{P} = \mathbb{L} \mathbb{U}_P \subset \GU$. The induced lift to $T \times \mathbb{G}_m$ of the minuscule fundamental coweight corresponding to $\alpha_{n/2}$ is
$$\omega_{n/2}^{\vee}(t) = (\diag(t, \ldots, t, 1, \ldots, 1), t),$$ where there are $n/2$ copies of $t$ in the first block. Then using Table \ref{tab:minuscule_coweights_AC}, an arbitrary element of $Z(\mathbb{L})(\mathbb{F})\dot{w}_P$ may be written
$$\begin{pmatrix} 0 & (-1)^{n/2}ct I_{n/2} \\ cI_{n/2} & 0\end{pmatrix}, \quad c \in \mathbb{E}^{\times}, \: t \in \mathbb{F}^{\times}.$$
This gives all of the setup necessary to apply Corollary \ref{QSplitCor}; we also note that $\dim \GL_n/P = \frac{n^2}{4}$ is even if and only if $n \equiv 0 \pmod{4}$.

\subsection{Quasi-split Type D}
Suppose that $n \geq 4$ and that we are in case (\ref{case3}) as in Section \ref{Sec:Qsplit}, so that the fixed root is $\alpha_1 = \varepsilon_1 - \varepsilon_2$ (we leave case (\ref{case2}) to the reader). As in Section \ref{sec:D}, we let $G = \GSO_{2n} \subset \GL_{2n}$, and we use the pinning defined there.
The involution $\sigma$ should send $\varepsilon_n$ to $-\varepsilon_n$ and fix $\varepsilon_i$ for $1 \leq i \leq n-1$. It is not hard to check that we may take $\sigma(g) = \Sigma g \Sigma^{-1}$ where $$\Sigma = \begin{pmatrix} I_{n-1} & 0 & 0 & 0\\ 0 &0 & 1 & 0 \\ 0 & 1 & 0 & 0 \\ 0 & 0 & 0 & I_{n-1}\end{pmatrix}.$$
We have
$$\sigma (\diag(t_1, \ldots, t_n, ct_{n}^{-1}, \ldots, ct_1^{-1})) = \diag(t_1, \ldots, t_{n-1}, ct_n^{-1}, t_n, ct_{n-1}^{-1}, \ldots, ct_1^{-1})).$$
The center $Z(G)$ is obtained by setting $t_i=t$ for all $i$ and $c=t^2$. We define $T_{\ad}$ by setting $c=t_1$ (so the last entry is $1$), and then we get a $\sigma$-equivariant isomorphism $T \cong Z(G) \times T_{\ad}$. We define $\mathbb{G}$ as in \eqref{Geq}, and then
\begin{align*} \mathbb{T}(\mathbb{F}) &= \{\diag(t_1, \ldots, t_n, ct_{n}^{-1}, \ldots, ct_1^{-1}) \: \mid \: c, t_1, \ldots, t_{n-1} \in \mathbb{F}^{\times}, \: t_n \in \mathbb{E}^{\times}, \: t_n^{q+1} = c \} \\
& \cong (\mathbb{F}^\times)^{n-1} \times \mathbb{E}^\times.
\end{align*}
The lift of $\omega_1^{\vee}$ to $G$ is given exactly as in \eqref{omega1eq}, and an arbitrary element of $Z(\mathbb{L})(\mathbb{F})\dot{w}_P$ is given exactly as in the first row of Table \ref{tab:minuscule_coweights_D}, where $c$, $t \in \mathbb{F}^{\times}$. We also note that $\dim G/P = 2n-2$ is even, so the factor $(-1)^{\dim G/P}$ in Corollary \ref{QSplitCor} is always equal to $1$.

\bibliographystyle{alpha}
\bibliography{references}

@Article{AlperinJames1995,
	author     = {Alperin, J. L. and James, G. D.},
	journal    = {J. Algebra},
	title      = {Bessel functions on finite groups},
	year       = {1995},
	issn       = {0021-8693,1090-266X},
	number     = {2},
	pages      = {524--530},
	volume     = {171},
	doi        = {10.1006/jabr.1995.1025},
	fjournal   = {Journal of Algebra},
	mrclass    = {20C15},
	mrnumber   = {1315910},
	mrreviewer = {I.\ M.\ Isaacs},
	url        = {https://doi.org/10.1006/jabr.1995.1025},
}

@article {LiuZhang2022,
	AUTHOR = {Liu, Baiying and Zhang, Qing},
	TITLE = {Gamma factors and converse theorems for classical groups over
	finite fields},
	JOURNAL = {J. Number Theory},
	FJOURNAL = {Journal of Number Theory},
	VOLUME = {234},
	YEAR = {2022},
	PAGES = {285--332},
	ISSN = {0022-314X,1096-1658},
	MRCLASS = {20C33 (20G40)},
	MRNUMBER = {4370538},
	MRREVIEWER = {Zhe\ Chen},
	DOI = {10.1016/j.jnt.2021.06.024},
	URL = {https://doi.org/10.1016/j.jnt.2021.06.024},
}

@article {LiuZhang2022b,
	AUTHOR = {Liu, Baiying and Zhang, Qing},
	TITLE = {On a converse theorem for {$\rm G_2$} over finite fields},
	JOURNAL = {Math. Ann.},
	FJOURNAL = {Mathematische Annalen},
	VOLUME = {383},
	YEAR = {2022},
	NUMBER = {3-4},
	PAGES = {1217--1283},
	ISSN = {0025-5831,1432-1807},
	MRCLASS = {20C33 (20G40)},
	MRNUMBER = {4458400},
	MRREVIEWER = {Rongqing\ Ye},
	DOI = {10.1007/s00208-021-02250-2},
	URL = {https://doi.org/10.1007/s00208-021-02250-2},
}

@article {LiuHazeltine2024,
	AUTHOR = {Hazeltine, Alexander and Liu, Bai Ying},
	TITLE = {A converse theorem for split {${\rm SO}_{2l}$} over finite
	fields},
	JOURNAL = {Acta Math. Sin. (Engl. Ser.)},
	FJOURNAL = {Acta Mathematica Sinica (English Series)},
	VOLUME = {40},
	YEAR = {2024},
	NUMBER = {3},
	PAGES = {731--771},
	ISSN = {1439-8516,1439-7617},
	MRCLASS = {20C33 (20G40)},
	MRNUMBER = {4712830},
	MRREVIEWER = {Damiano\ Rossi},
	DOI = {10.1007/s10114-023-2061-6},
	URL = {https://doi.org/10.1007/s10114-023-2061-6},
}

@Article{LamTemplier2024,
  author   = {Lam, Thomas and Templier, Nicolas},
  journal  = {Duke Math. J.},
  title    = {The mirror conjecture for minuscule flag varieties},
  year     = {2024},
  issn     = {0012-7094,1547-7398},
  number   = {1},
  pages    = {75--175},
  volume   = {173},
  doi      = {10.1215/00127094-2024-0007},
  fjournal = {Duke Mathematical Journal},
  mrclass  = {14D24 (11T23 14J33 14M15)},
  mrnumber = {4728689},
  url      = {https://doi.org/10.1215/00127094-2024-0007},
}

@article {PyCox,
    AUTHOR = {Geck, Meinolf},
     TITLE = {{PyCox}: computing with (finite) {C}oxeter groups and
              {I}wahori-{H}ecke algebras},
   JOURNAL = {LMS J. Comput. Math.},
  FJOURNAL = {LMS Journal of Computation and Mathematics},
    VOLUME = {15},
      YEAR = {2012},
     PAGES = {231--256},
      ISSN = {1461-1570},
   MRCLASS = {20C40 (20C08 20F55)},
  MRNUMBER = {2988815},
MRREVIEWER = {Chi\ Kin\ Mak},
       DOI = {10.1112/S1461157012001064},
       URL = {https://doi-org.ccl.idm.oclc.org/10.1112/S1461157012001064},
}

@book {Kac90,
    AUTHOR = {Kac, Victor G.},
     TITLE = {Infinite-dimensional {L}ie algebras},
   EDITION = {Third},
 PUBLISHER = {Cambridge University Press, Cambridge},
      YEAR = {1990},
     PAGES = {xxii+400},
      ISBN = {0-521-37215-1; 0-521-46693-8},
   MRCLASS = {17B65 (17B67 17B68 58F07)},
  MRNUMBER = {1104219},
       DOI = {10.1017/CBO9780511626234},
       URL = {https://doi-org.ccl.idm.oclc.org/10.1017/CBO9780511626234},
}

@article {NP01,
    AUTHOR = {Ng\^{o}, B. C. and Polo, P.},
     TITLE = {R\'{e}solutions de {D}emazure affines et formule de
              {C}asselman-{S}halika g\'{e}om\'{e}trique},
   JOURNAL = {J. Algebraic Geom.},
  FJOURNAL = {Journal of Algebraic Geometry},
    VOLUME = {10},
      YEAR = {2001},
    NUMBER = {3},
     PAGES = {515--547},
      ISSN = {1056-3911},
   MRCLASS = {14F43 (14F17)},
  MRNUMBER = {1832331},
MRREVIEWER = {Nguy\cftil{e}n Qu\^{o}c Th\'{a}ng},
}

@incollection {BK07,
    AUTHOR = {Berenstein, Arkady and Kazhdan, David},
     TITLE = {Geometric and unipotent crystals. {II}. {F}rom unipotent
              bicrystals to crystal bases},
 BOOKTITLE = {Quantum groups},
    SERIES = {Contemp. Math.},
    VOLUME = {433},
     PAGES = {13--88},
 PUBLISHER = {Amer. Math. Soc., Providence, RI},
      YEAR = {2007},
      ISBN = {978-0-8218-3713-9},
   MRCLASS = {17B37 (14L30 22E47)},
  MRNUMBER = {2349617},
MRREVIEWER = {Jonathan\ Brundan},
       DOI = {10.1090/conm/433/08321},
       URL = {https://doi.org/10.1090/conm/433/08321},
}

@incollection {functionsheaf,
    AUTHOR = {Deligne, P.},
     TITLE = {Rapport sur la formule des traces},
 BOOKTITLE = {Cohomologie \'etale},
    SERIES = {Lecture Notes in Math.},
    VOLUME = {569},
     PAGES = {76--109},
 PUBLISHER = {Springer, Berlin},
      YEAR = {1977},
      ISBN = {3-540-08066-X; 0-387-08066-X},
   MRCLASS = {14F20 (11G40 14G10)},
  MRNUMBER = {3727434},
       DOI = {10.1007/BFb0091519},
       URL = {https://doi-org.ccl.idm.oclc.org/10.1007/BFb0091519},
}

@article {MRR23,
    AUTHOR = {Mayeux, Arnaud and Richarz, Timo and Romagny, Matthieu},
     TITLE = {N\'eron blowups and low-degree cohomological applications},
   JOURNAL = {Algebr. Geom.},
  FJOURNAL = {Algebraic Geometry},
    VOLUME = {10},
      YEAR = {2023},
    NUMBER = {6},
     PAGES = {729--753},
      ISSN = {2313-1691,2214-2584},
   MRCLASS = {14L15},
  MRNUMBER = {4673395},
MRREVIEWER = {Alan\ Koch},
       DOI = {10.14231/ag-2023-026},
       URL = {https://doi.org/10.14231/ag-2023-026},
}

@incollection {ZhuAffine,
    AUTHOR = {Zhu, Xinwen},
     TITLE = {An introduction to affine {G}rassmannians and the geometric
              {S}atake equivalence},
 BOOKTITLE = {Geometry of moduli spaces and representation theory},
    SERIES = {IAS/Park City Math. Ser.},
    VOLUME = {24},
     PAGES = {59--154},
 PUBLISHER = {Amer. Math. Soc., Providence, RI},
      YEAR = {2017},
      ISBN = {978-1-4704-3574-5},
   MRCLASS = {14M15 (14D24 20F65 22E57)},
  MRNUMBER = {3752460},
MRREVIEWER = {Felipe\ Zald\'ivar},
}

@article {XZ22,
    AUTHOR = {Xu, Daxin and Zhu, Xinwen},
     TITLE = {Bessel {$F$}-isocrystals for reductive groups},
   JOURNAL = {Invent. Math.},
  FJOURNAL = {Inventiones Mathematicae},
    VOLUME = {227},
      YEAR = {2022},
    NUMBER = {3},
     PAGES = {997--1092},
      ISSN = {0020-9910,1432-1297},
   MRCLASS = {14F30 (14D24 14F10)},
  MRNUMBER = {4384193},
MRREVIEWER = {Nguy\cftil en Qu\^oc Th\'ang},
       DOI = {10.1007/s00222-021-01079-5},
       URL = {https://doi.org/10.1007/s00222-021-01079-5},
}

@Article{GelfandGraev1962,
  author     = {Gelfand, I. M. and Graev, M. I.},
  journal    = {Dokl. Akad. Nauk SSSR},
  title      = {Categories of group representations and the classification problem of irreducible representations},
  year       = {1962},
  issn       = {0002-3264},
  pages      = {757--760},
  volume     = {146},
  fjournal   = {Doklady Akademii Nauk SSSR},
  mrclass    = {20.80},
  mrnumber   = {142668},
  mrreviewer = {P.\ Constantinescu},
}

@article {dCHL,
    AUTHOR = {de Cataldo, Mark Andrea and Haines, Thomas J. and Li, Li},
     TITLE = {Frobenius semisimplicity for convolution morphisms},
   JOURNAL = {Math. Z.},
  FJOURNAL = {Mathematische Zeitschrift},
    VOLUME = {289},
      YEAR = {2018},
    NUMBER = {1-2},
     PAGES = {119--169},
      ISSN = {0025-5874,1432-1823},
   MRCLASS = {14G15 (14F05 14M15)},
  MRNUMBER = {3803785},
MRREVIEWER = {Alan\ Koch},
       DOI = {10.1007/s00209-017-1946-4},
       URL = {https://doi.org/10.1007/s00209-017-1946-4},
}

@Article{HNY,
  author     = {Heinloth, Jochen and Ng\^o, Bao-Ch\^au and Yun, Zhiwei},
  journal    = {Ann. of Math. (2)},
  title      = {Kloosterman sheaves for reductive groups},
  year       = {2013},
  issn       = {0003-486X,1939-8980},
  number     = {1},
  pages      = {241--310},
  volume     = {177},
  doi        = {10.4007/annals.2013.177.1.5},
  fjournal   = {Annals of Mathematics. Second Series},
  mrclass    = {22E57 (11L05)},
  mrnumber   = {2999041},
  mrreviewer = {K.\ Strambach},
  url        = {https://doi.org/10.4007/annals.2013.177.1.5},
}

@Article{Gelfand1970,
  author     = {Gel'fand, S. I.},
  journal    = {Math. USSR Sb.},
  title      = {Representations of the full linear group over a finite field},
  year       = {1970},
  pages      = {13--39},
  volume     = {12(1)},
  mrclass    = {20.80},
  mrnumber   = {0272916},
  mrreviewer = {J. D. Dixon},
}

@incollection {Geck17,
    AUTHOR = {Geck, Meinolf},
     TITLE = {Minuscule weights and {C}hevalley groups},
 BOOKTITLE = {Finite simple groups: thirty years of the atlas and beyond},
    SERIES = {Contemp. Math.},
    VOLUME = {694},
     PAGES = {159--176},
 PUBLISHER = {Amer. Math. Soc., Providence, RI},
      YEAR = {2017},
      ISBN = {978-1-4704-3678-0; 978-1-4704-4168-5},
   MRCLASS = {20G40 (17B45)},
  MRNUMBER = {3682597},
MRREVIEWER = {Sergey\ Sinchuk},
}

@Article{GH96,
author = "M. Geck and G. Hiss and F.
                  L{\"u}beck and G. Malle and G. Pfeiffer",
title = "{\sf CHEVIE} -- {A} system for computing and processing
                  generic character tables for finite groups of {L}ie
                  type, {W}eyl groups and {H}ecke algebras",
journal = "Appl. Algebra Engrg. Comm. Comput.",
volume = 7,
pages = "175--210",
year = 1996,
}

@Book{PiatetskiShapiro1983,
  author     = {Piatetski-Shapiro, Ilya},
  publisher  = {American Mathematical Society, Providence, RI},
  title      = {Complex representations of {${\rm GL}(2,\,K)$}\ for finite fields {$K$}},
  year       = {1983},
  isbn       = {0-8281-5019-9},
  series     = {Contemporary Mathematics},
  volume     = {16},
  mrclass    = {20G05 (10D40 22E50)},
  mrnumber   = {696772},
  mrreviewer = {Stephen\ Gelbart},
  pages      = {vii+71},
}

@InCollection{CurtisShinoda2004,
  author     = {Curtis, Charles W. and Shinoda, Ken-ichi},
  booktitle  = {Representation theory of algebraic groups and quantum groups},
  publisher  = {Math. Soc. Japan, Tokyo},
  title      = {Zeta functions and functional equations associated with the components of the {G}elfand-{G}raev representations of a finite reductive group},
  year       = {2004},
  pages      = {121--139},
  series     = {Adv. Stud. Pure Math.},
  volume     = {40},
  doi        = {10.2969/aspm/04010121},
  mrclass    = {20G05 (20G40)},
  mrnumber   = {2074592},
  mrreviewer = {Karine Sorlin},
  url        = {https://doi.org/10.2969/aspm/04010121},
}

@Article{Carter1992,
  author   = {Carter, R. W.},
  journal  = {Proc. London Math. Soc. (3)},
  title    = {Cuspidal matrix representations for {${\rm GL}_2(q)$} and {${\rm GL}_3(q)$}},
  year     = {1992},
  issn     = {0024-6115},
  number   = {3},
  pages    = {487--523},
  volume   = {64},
  doi      = {10.1112/plms/s3-64.3.487},
  fjournal = {Proceedings of the London Mathematical Society. Third Series},
  mrclass  = {20G05 (20C15 22E55)},
  mrnumber = {1152995},
  url      = {https://doi.org/10.1112/plms/s3-64.3.487},
}

@Article{DeriziotisGotsis1998,
  author     = {Deriziotis, D. I. and Gotsis, C. P.},
  journal    = {LMS J. Comput. Math.},
  title      = {The cuspidal modules of the finite general linear groups},
  year       = {1998},
  issn       = {1461-1570},
  pages      = {75--108},
  volume     = {1},
  doi        = {10.1112/S1461157000000152},
  fjournal   = {LMS Journal of Computation and Mathematics},
  mrclass    = {20C33 (20G40)},
  mrnumber   = {1642103},
  mrreviewer = {R. W. Carter},
  url        = {http://dx.doi.org/10.1112/S1461157000000152},
}

@PhdThesis{Gotsis1997,
  author = {Gotsis, C. P.},
  school = {Athens University},
  title  = {Complex representations of the general linear group {GLn}(q)},
  year   = {1997},
}

@Article{HelversenPasotto1982,
  author   = {Helversen-Pasotto, Anna},
  journal  = {Mathematische Annalen},
  title    = {Darstellungen von $\mathrm{GL}(3, {F}_q)$ und {G}außsche {S}ummen.},
  year     = {1982},
  pages    = {1-22},
  volume   = {260},
  url      = {http://eudml.org/doc/163651},
}

@Article{Tulunay2004,
  author     = {Tulunay, I.},
  journal    = {Comm. Algebra},
  title      = {Cuspidal modules as summands of a {G}el'fand-{G}raev module},
  year       = {2004},
  issn       = {0092-7872},
  number     = {4},
  pages      = {1519--1530},
  volume     = {32},
  doi        = {10.1081/AGB-120028796},
  fjournal   = {Communications in Algebra},
  mrclass    = {20G05 (20G40)},
  mrnumber   = {2100372},
  mrreviewer = {Karine Sorlin},
  url        = {https://doi.org/10.1081/AGB-120028796},
}

@Article{ShinodaTulunay2005,
  author     = {Shinoda, Kenichi and Tulunay, Ilknur},
  journal    = {J. Algebra Appl.},
  title      = {Representations of the {H}ecke algebra for {${\rm GL}_4(q)$}},
  year       = {2005},
  issn       = {0219-4988},
  number     = {6},
  pages      = {631--644},
  volume     = {4},
  doi        = {10.1142/S0219498805001459},
  fjournal   = {Journal of Algebra and its Applications},
  mrclass    = {20C08 (20C33)},
  mrnumber   = {2192148},
  mrreviewer = {Robert B\'{e}dard},
  url        = {https://doi.org/10.1142/S0219498805001459},
}

@book {DJ20,
    AUTHOR = {Digne, Fran\c cois and Michel, Jean},
     TITLE = {Representations of finite groups of {L}ie type},
    SERIES = {London Mathematical Society Student Texts},
    VOLUME = {95},
   EDITION = {Second},
 PUBLISHER = {Cambridge University Press, Cambridge},
      YEAR = {2020},
     PAGES = {vii+257},
      ISBN = {978-1-108-72262-9; 978-1-108-48148-9},
   MRCLASS = {20C33 (20D06 20G05)},
  MRNUMBER = {4211777},
}

@Article{Nien2017,
  author     = {Nien, Chufeng},
  journal    = {Finite Fields Appl.},
  title      = {{$n\times 1$} local gamma factors and {G}auss sums},
  year       = {2017},
  issn       = {1071-5797},
  pages      = {255--270},
  volume     = {46},
  doi        = {10.1016/j.ffa.2017.04.005},
  fjournal   = {Finite Fields and their Applications},
  mrclass    = {20C33 (11L05)},
  mrnumber   = {3655759},
  mrreviewer = {John T. Cullinan},
  url        = {https://doi.org/10.1016/j.ffa.2017.04.005},
}

@Article{Zelingher2023,
  author   = {Zelingher, Elad},
  journal  = {Adv. Math.},
  title    = {On values of the {B}essel function for generic representations of finite general linear groups},
  year     = {2023},
  issn     = {0001-8708,1090-2082},
  pages    = {Paper No. 109314, 47},
  volume   = {434},
  doi      = {10.1016/j.aim.2023.109314},
  fjournal = {Advances in Mathematics},
  mrclass  = {20C33 (11L05 11T24)},
  mrnumber = {4647744},
  url      = {https://doi.org/10.1016/j.aim.2023.109314},
}

@Book{Katz1988,
  author     = {Katz, Nicholas M.},
  publisher  = {Princeton University Press, Princeton, NJ},
  title      = {Gauss sums, {K}loosterman sums, and monodromy groups},
  year       = {1988},
  isbn       = {0-691-08432-7; 0-691-08433-5},
  series     = {Annals of Mathematics Studies},
  volume     = {116},
  doi        = {10.1515/9781400882120},
  mrclass    = {11G40 (11L05 11T24 14G10 14G15)},
  mrnumber   = {955052},
  mrreviewer = {P. Bayer},
  pages      = {x+246},
  url        = {https://doi.org/10.1515/9781400882120},
}

@Book{Deligne1977,
  author    = {Deligne, P.},
  publisher = {Springer-Verlag, Berlin},
  title     = {Cohomologie \'{e}tale},
  year      = {1977},
  isbn      = {3-540-08066-X; 0-387-08066-X},
  note      = {S\'{e}minaire de g\'{e}om\'{e}trie alg\'{e}brique du Bois-Marie SGA $4\frac{1}{2}$},
  series    = {Lecture Notes in Mathematics},
  volume    = {569},
  doi       = {10.1007/BFb0091526},
  mrclass   = {14F20},
  mrnumber  = {463174},
  pages     = {iv+312},
  url       = {https://doi.org/10.1007/BFb0091526},
}

@Article{Nien2014,
  author     = {Nien, Chufeng},
  journal    = {Amer. J. Math.},
  title      = {A proof of the finite field analogue of {J}acquet's conjecture},
  year       = {2014},
  issn       = {0002-9327},
  number     = {3},
  pages      = {653--674},
  volume     = {136},
  fjournal   = {American Journal of Mathematics},
  mrclass    = {20G40 (20G05)},
  mrnumber   = {3214273},
  mrreviewer = {Daniel Goldstein},
  url        = {https://doi-org/10.1353/ajm.2014.0020},
}

@MastersThesis{Roditty2010,
  author = {Roditty, Edva-Aida},
  school = {Tel Aviv University},
  title  = {On Gamma factors and {B}essel functions for representations of general linear groups over finite fields},
  year   = {2010},
}

@Article{SoudryZelingher2023,
  author   = {Soudry, David and Zelingher, Elad},
  journal  = {Essent. Number Theory},
  title    = {On gamma factors for representations of finite general linear groups},
  year     = {2023},
  issn     = {2834-4626,2834-4634},
  number   = {1},
  pages    = {45--82},
  volume   = {2},
  doi      = {10.2140/ent.2023.2.45},
  fjournal = {Essential Number Theory},
  mrclass  = {11F66 (11T24 20C33)},
  mrnumber = {4681468},
  url      = {https://doi.org/10.2140/ent.2023.2.45},
}

@Article{JacquetPiatetskiShapiroShalika1983,
  author     = {Jacquet, H. and Piatetskii-Shapiro, I. I. and Shalika, J. A.},
  journal    = {Amer. J. Math.},
  title      = {Rankin-{S}elberg convolutions},
  year       = {1983},
  issn       = {0002-9327},
  number     = {2},
  pages      = {367--464},
  volume     = {105},
  doi        = {10.2307/2374264},
  fjournal   = {American Journal of Mathematics},
  mrclass    = {11F67 (11F70 11R39 22E55)},
  mrnumber   = {701565},
  mrreviewer = {Freydoon Shahidi},
  url        = {https://doi.org/10.2307/2374264},
}

@Article{Shahidi1984,
  author     = {Shahidi, Freydoon},
  journal    = {Amer. J. Math.},
  title      = {Fourier transforms of intertwining operators and {P}lancherel measures for {${\rm GL}(n)$}},
  year       = {1984},
  issn       = {0002-9327},
  number     = {1},
  pages      = {67--111},
  volume     = {106},
  doi        = {10.2307/2374430},
  fjournal   = {American Journal of Mathematics},
  mrclass    = {22E50 (11S37)},
  mrnumber   = {729755},
  mrreviewer = {Allan J. Silberger},
  url        = {https://doi.org/10.2307/2374430},
}

@Article{Shahidi1990,
  author     = {Shahidi, Freydoon},
  journal    = {Ann. of Math. (2)},
  title      = {A proof of {L}anglands' conjecture on {P}lancherel measures; complementary series for {$p$}-adic groups},
  year       = {1990},
  issn       = {0003-486X},
  number     = {2},
  pages      = {273--330},
  volume     = {132},
  doi        = {10.2307/1971524},
  fjournal   = {Annals of Mathematics. Second Series},
  mrclass    = {11R39 (11F70 11S37 22E35 22E55)},
  mrnumber   = {1070599},
  mrreviewer = {Stephen Gelbart},
  url        = {https://doi.org/10.2307/1971524},
}

@Article{GGP2012,
  author   = {Gan, Wee Teck and Gross, Benedict H. and Prasad, Dipendra},
  journal  = {Ast\'{e}risque},
  title    = {Symplectic local root numbers, central critical {$L$} values, and restriction problems in the representation theory of classical groups},
  year     = {2012},
  issn     = {0303-1179},
  note     = {Sur les conjectures de Gross et Prasad. I},
  number   = {346},
  pages    = {1--109},
  fjournal = {Ast\'{e}risque},
  isbn     = {978-2-85629-348-5},
  mrclass  = {22E50 (11F70 11R39 22E55)},
  mrnumber = {3202556},
}

@Article{Rainbolt2002,
  author     = {Rainbolt, Julianne G.},
  journal    = {Comm. Algebra},
  title      = {The irreducible representations of the {H}ecke algebras constructed from the {G}elfand-{G}raev representations of {${\rm GL}(3,q)$} and {${\rm U}(3,q)$}},
  year       = {2002},
  issn       = {0092-7872,1532-4125},
  number     = {9},
  pages      = {4085--4103},
  volume     = {30},
  doi        = {10.1081/AGB-120013305},
  fjournal   = {Communications in Algebra},
  mrclass    = {20C08},
  mrnumber   = {1936457},
  mrreviewer = {Richard\ M.\ Green},
  url        = {https://doi.org/10.1081/AGB-120013305},
}

@Article{Rainbolt2008,
  author   = {Rainbolt, Julianne G.},
  journal  = {J. Algebra},
  title    = {Notes on the norm map between the {H}ecke algebras of the {G}elfand-{G}raev representations of {${\rm GL}(2,q^2)$} and {${\rm U}(2,q)$}},
  year     = {2008},
  issn     = {0021-8693,1090-266X},
  number   = {9},
  pages    = {3493--3511},
  volume   = {320},
  doi      = {10.1016/j.jalgebra.2008.07.025},
  fjournal = {Journal of Algebra},
  mrclass  = {20C08 (20C33)},
  mrnumber = {2455512},
  url      = {https://doi.org/10.1016/j.jalgebra.2008.07.025},
}

@Article{BreedingAllisonRainbolt2019,
  author     = {Breeding-Allison, Jeffery and Rainbolt, Julianne},
  journal    = {Comm. Algebra},
  title      = {{T}he {G}elfand-{G}raev representation of {${\rm GSp}(4,\mathbb{F}_q)$}},
  year       = {2019},
  issn       = {0092-7872,1532-4125},
  number     = {2},
  pages      = {560--584},
  volume     = {47},
  doi        = {10.1080/00927872.2018.1485228},
  fjournal   = {Communications in Algebra},
  mrclass    = {20C08 (20C33 20G05 20G40)},
  mrnumber   = {3935366},
  mrreviewer = {Olivier\ Dudas},
  url        = {https://doi.org/10.1080/00927872.2018.1485228},
}

@Article{Chang1976,
  author     = {Chang, Bomshik},
  journal    = {Comm. Algebra},
  title      = {Decomposition of {G}elfand-{G}raev characters of {${\rm GL}\sb{3}(q)$}},
  year       = {1976},
  issn       = {0092-7872},
  number     = {4},
  pages      = {375--401},
  volume     = {4},
  doi        = {10.1080/00927877608822112},
  fjournal   = {Communications in Algebra},
  mrclass    = {20G40},
  mrnumber   = {401940},
  mrreviewer = {T. V. Fossum},
  url        = {https://doi.org/10.1080/00927877608822112},
}

@Article{CassGuZelingher2026GitHub,
  author = {Robert Cass and Miao (Pam) Gu and Elad Zelingher},
  title  = {{C}ompanion {P}rograms for {K}loosterman sheaves and {B}essel functions for generic principal series representations},
  year   = {2026},
  note   = {Available at \href{https://cgz2026.github.io/}{https://cgz2026.github.io/}},
}
\end{document}